\title[Rigidity of Derivations in the Plane]{Rigidity of 
Derivations in the Plane and in Metric Measure Spaces}
\author{Jasun Gong}
\email{jasun.gong@aalto.fi}
\address{Jasun Gong \hfill\break\indent
Institute of Mathematics, Aalto University \hfill\break\indent 
P.O. Box 11100, FI-00076 Aalto
\hfill\break\indent Finland
}
\subjclass[2010]{46G05 (49J52, 28A75)}
\date{9 August 2011}
\theoremstyle{plain}
\newtheorem{thm}{Theorem}[section]
\newtheorem{prop}[thm]{Proposition}
\newtheorem{cor}[thm]{Corollary}
\newtheorem{lemma}[thm]{Lemma}
\newtheorem{ques}[thm]{Question}
\newtheorem{conj}[thm]{Conjecture}
\theoremstyle{definition}
\newtheorem{defn}[thm]{Definition}
\newtheorem{rmk}[thm]{Remark}
\newtheorem{eg}[thm]{Example}
\theoremstyle{remark}
\newtheorem{claim}[thm]{Claim}
\numberwithin{equation}{section}
\renewcommand{\a}{\alpha}
\renewcommand{\b}{\beta}
\renewcommand{\d}{\delta}
\newcommand{\diam}{\operatorname{diam}}
\newcommand{\e}{\epsilon}
\renewcommand{\H}{\mathcal{H}}
\newcommand{\id}{\operatorname{id}}
\newcommand{\G}{\mathcal{G}}
\newcommand{\Li}{L^\infty}
\newcommand{\lip}{\operatorname{lip}}
\newcommand{\Lip}{\operatorname{Lip}}
\newcommand{\Lipb}{\operatorname{Lip}_b}
\newcommand{\m}{\mathcal{M}}
\newcommand{\n}{\mathcal{N}}
\newcommand{\N}{\mathbb{N}}
\newcommand{\proj}{\operatorname{proj}}
\newcommand{\R}{\mathbb{R}}
\newcommand{\spt}{\operatorname{spt}}
\newcommand{\U}{\Upsilon}
\newcommand{\Z}{\mathbb{Z}}
\newcommand{\wkto}{\rightharpoonup}
\newcommand{\wsto}{\stackrel{*}{\rightharpoonup}}
\def\Xint#1{\mathchoice
   {\XXint\displaystyle\textstyle{#1}}%
   {\XXint\textstyle\scriptstyle{#1}}%
   {\XXint\scriptstyle\scriptscriptstyle{#1}}%
   {\XXint\scriptscriptstyle\scriptscriptstyle{#1}}%
   \!\int}
\def\XXint#1#2#3{{\setbox0=\hbox{$#1{#2#3}{\int}$}
     \vcenter{\hbox{$#2#3$}}\kern-.5\wd0}}
\def\dashint{\Xint-}
\begin{document}

\begin{abstract}
Following Weaver \cite{WeaverED} we study generalized differential 
operators, called (metric) derivations, and their linear algebraic 
properties.
In particular, for $k = 1, 2$ we show that measures on $\R^k$ that induce 
rank-$k$ modules of derivations must be absolutely continuous to Lebesgue 
measure.  An analogous result holds true for measures concentrated on 
$k$-rectifiable sets with respect to $k$-dimensional Hausdorff measure.

Though formulated for Euclidean spaces, these rigidity results also apply 
%to the general setting of metric spaces 
to the metric space setting and specifically, to spaces that support a
doubling measure and a $p$-Poincar\'e inequality.  Using our results for
the Euclidean plane, we prove the $2$-dimensional case of a conjecture of
Cheeger, which concerns the non-degeneracy of Lipschitz images of such
spaces.
\end{abstract}

\maketitle

%=========================================================================
\tableofcontents

\section{Introduction} \label{sect_intro}

In this work we consider Weaver's theory of (metric) derivations 
\cite{WeaverED}, which are generalizations of differential operators 
on Riemannian manifolds.  For metric spaces equipped with a Borel measure, 
derivations are linear operators from the class of bounded Lipschitz 
functions to the class of essentially bounded functions with respect to 
certain weak topologies; see Lemma \ref{lemma_weakstarlip} and Definition 
\ref{defn_deriv}.

On $\R^n$ equipped with the standard metric, Rademacher's theorem states that 
every Lipschitz function is almost everywhere (a.e.) differentiable with 
respect to the Lebesgue measure.  Put one way, the validity of 
Rademacher's theorem is encapsulated in the structure of a metric space,
if there exists a nonzero derivation with respect to a fixed measure on 
that space.

\subsection{Rigidity of Measures and Derivations} \label{sect_rigidity}

The framework of \cite{WeaverED} includes many examples, such as 
Riemannian manifolds, the self-similar fractal spaces of Laakso 
\cite{Laakso}, and infinite-dimensional spaces such as Banach manifolds 
and abstract Wiener spaces.  In each example, there are natural choices 
for the metric and measure, but one may inquire as to how flexible these 
choices can be made.

\begin{ques} \label{ques_howmany}
On a given metric space, which measures induce nontrivial derivations?
Of those, how many can we expect?
\end{ques}

For a fixed space, the set of derivations admits a natural module 
structure, so the notions of {\em linear independence} and {\em basis} 
are well-defined for derivations.  We therefore determine ``how many'' 
derivations exist on a space in terms of the \emph{rank} of the module.

To clarify, Question \ref{ques_howmany} is not simply a matter of the
Hausdorff dimensions of the relevant spaces, but of subtler issues of
geometry as well.  Given a line in $\R^n$, for instance, $1$-dimensional
Hausdorff measure induces a rank-$1$ module of derivations
\cite[Thm 38]{WeaverED}.  On the other hand, the ``middle-thirds''
Sierpi\'nski carpet in $\R^2$ equipped with its natural Hausdorff measure
(of dimension $\log_38$) does not admit {\em any} nonzero derivations
\cite[Thm 41]{WeaverED}.

In this paper we will focus on the case of Euclidean spaces.  The following 
result indicates that, for $k = 1,2$, there are few choices of Radon 
measures on $(\R^k, | \cdot |)$ that induce rank-$k$ modules of 
derivations.  

\begin{thm} \label{thm_rigiditylowdim} 
Let $k \in \{1,2\}$.  If $\mu$ is a Radon measure on $(\R^k, | \cdot |)$ 
that induces a rank-$k$ module of derivations, then it is  absolutely 
continuous to %$k$-dimensional
Lebesgue measure.  Moreover, derivations with respect to $\mu$ are linear
combinations of the differential operators $\{\partial / \partial x_i\}_{i=1}^k$
with scalars in $L^\infty(\R^k,\mu)$.
\end{thm}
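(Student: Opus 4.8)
The plan is to split $\mu$ into its absolutely continuous and singular parts with respect to $\L^k$, to dispose of the singular part by a converse-Rademacher argument, and then to read off the explicit form of the derivations on the remaining absolutely continuous measure. Write $\mu = \mu_a + \mu_s$ with $\mu_a \ll \L^k$ and $\mu_s \perp \L^k$, and fix a Borel set $N$ carrying $\mu_s$ with $\L^k(N) = 0$.

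First I would treat the absolutely continuous case, which fixes the target form. Fix a derivation $\delta$ and set $b_i := \delta(\pi_i) \in \Li(\mu)$, where $\pi_i(x) = x_i$. From the Leibniz rule one obtains the chain rule $\delta(g) = \sum_i b_i\,\partial_i g$ for $g \in C^1$. For arbitrary $f \in \Lipb$, mollify: $f * \rho_\e \to f$ weak-$*$ in $\Lipb$, so $\delta(f*\rho_\e) \to \delta(f)$ weak-$*$ in $\Li(\mu)$. Since $\mu_a \ll \L^k$, Rademacher's theorem gives $\partial_i f$ defined $\mu_a$-a.e., and $\partial_i(f*\rho_\e) \to \partial_i f$ in $L^1(\mu_a)$ by dominated convergence against the density $d\mu_a/d\L^k$. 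Passing to the limit in $\delta(f*\rho_\e) = \sum_i b_i\,\partial_i(f*\rho_\e)$ gives $\delta(f) = \sum_i b_i\,\partial_i f$ on $\mu_a$. Hence on $\mu_a$ every derivation lies in the $\Li(\mu_a)$-span of $\{\partial/\partial x_i\}_{i=1}^k$, and these operators are independent there, yielding the asserted basis.

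The heart of the matter, and the step I expect to be the main obstacle, is to show $\mu_s = 0$. The engine is the pointwise Lipschitz bound available for metric derivations: for each derivation $\delta$ there is a constant $C$ with $|\delta(f)(x)| \le C\,\lip f(x)$ for $\mu$-a.e. $x$, where $\lip f$ is the pointwise Lipschitz constant. Localising the rank-$k$ module to $N$ (multiply by $\chi_N \in \Li(\mu)$) produces $k$ derivations $\delta_1,\dots,\delta_k$, independent over $\Li(\mu_s)$, all controlled by $\lip(\cdot)$ and carried by the Lebesgue-null set $N$. I would now invoke the structure theory of Lebesgue-null sets in low dimensions: in $\R^k$ with $k \le 2$, the Alberti--Cs\"ornyei--Preiss results furnish, for any null set $N$, Lipschitz functions whose pointwise Lipschitz constants can be made to vanish $\mu_s$-a.e. in every direction at once. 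Feeding such functions into the bound forces $\delta_j(f) = 0$ $\mu_s$-a.e. for a family of $f$ rich enough to witness the $k$ independent directions, contradicting the independence (equivalently, the spanning) of $\delta_1,\dots,\delta_k$. The dimensional restriction $k \le 2$ enters only here, since this is precisely the range in which the null-set structure theorem, the converse of Rademacher's theorem in the requisite directional form, was available. Pinning down the exact form of these test functions and verifying that they annihilate the full $k$-dimensional span is the crux of the argument; everything else is bookkeeping.

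With $\mu_s = 0$ established, $\mu = \mu_a \ll \L^k$ and the representation from the first step applies globally: derivations with respect to $\mu$ are exactly the $\Li(\mu)$-linear combinations $\sum_i b_i\,\partial/\partial x_i$, which completes the proof.
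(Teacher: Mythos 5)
Your decomposition $\mu=\mu_a+\mu_s$ and your treatment of the absolutely continuous part track the paper closely (this is Lemma \ref{lemma_chainrule} plus Rademacher), but the crux step --- killing $\mu_s$ --- rests on a false premise. You assert that for an arbitrary null set $N\subset\R^k$, $k\le 2$, the Alberti--Cs\"ornyei--Preiss results supply Lipschitz functions whose pointwise Lipschitz constants vanish $\mu_s$-a.e.\ ``in every direction at once,'' so that feeding them into the bound $|\delta f|\le C\,\lip f$ forces \emph{every} derivation to vanish on $N$. That would say every measure singular to $m_2$ carries a rank-$0$ module. This is false: $\mu_s=\H^1\lfloor(\R\times\{0\})$ is singular to $m_2$ yet carries the nonzero derivation $\chi_{\R\times\{0\}}\,\partial_1$ (Lemma \ref{lemma_1rect}, \cite[Thm 38]{WeaverED}); for $f=x_1$ no approximation scheme can make $\delta f=0$ there. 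The asymmetry between the dimensions is exactly here: on $\R$ derivations genuinely vanish on null sets (Lemma \ref{lemma_dim1}), but on $\R^2$ a singular measure only suffers a rank reduction to at most $1$ (Theorem \ref{thm_2dimLI}), never to $0$. Separately, your ``engine'' $|\delta f(x)|\le C\,\lip f(x)$ for general derivations is not established in the paper (the pointwise estimate of Lemma \ref{lemma_derivlipbd} holds only for the Cheeger-type derivations on PI spaces), and the paper's argument never needs it.

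What the ACP structure theorem actually provides, and what the paper exploits, is a splitting $E=E^1\cup E^2$ of the null set into a part covered by $x_1$-stripes and a part covered by $x_2$-stripes of arbitrarily small total thickness. On $E^1$ one constructs uniformly Lipschitz, piecewise-linear functions $\varphi_j\to x_2$ pointwise which near $E^1$ depend only on the variable $x_1$ (Theorem \ref{thm_disjointcovering} and Lemma \ref{lemma_approxcoord}); weak-$*$ continuity of derivations and the chain rule then give the \emph{dependence relation} $\delta x_2=g_1\cdot\delta x_1$ $\mu$-a.e.\ on $E^1$, and symmetrically $\delta x_1=g_2\cdot\delta x_2$ on $E^2$ (Lemma \ref{lemma_linrel}) --- a rank bound, not vanishing. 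From these relations an explicit choice of scalars \eqref{eq_2dimscalars} exhibits any two derivations restricted to $E$ as linearly dependent, which contradicts the rank-$2$ hypothesis whenever $\mu_s\neq 0$; for $k=1$ the same scheme with Lemma \ref{lemma_dim1} applies. To repair your proposal you must replace ``vanishing in all directions'' by these one-directional dependence relations and run the contradiction through the rank, as in the paper's proofs of Theorem \ref{thm_2dimLI} and Theorem \ref{thm_rigiditylowdim}.
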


The class of Lipschitz functions on a space clearly depends on the choice
of metric on that space.  So in terms of derivations, Theorem
\ref{thm_rigiditylowdim} can be viewed as a {\em rigidity} result for
measures on $\R^k$ that obey a Rademacher-type property.

Regarding the $k=2$ case, the proof uses a recent result of Alberti, 
Cs\"ornyei, and Preiss \cite{ACP} about the structure of Lebesgue null 
sets in $\R^2$.  Roughly speaking, it asserts that every Lebesgue null
set in $\R^2$ (that is, a subset of zero Lebesgue measure) splits
into a horizontal part and a vertical part.  So given a measure $\mu$
that is concentrated on such a set, we show that each part admits a
generalized``tangent'' vector field whose components satisfy a linear
dependence relation for all derivations with respect to $\mu$; see Lemma
\ref{lemma_linrel}.

The remaining case of rank-$1$ modules on $\R^2$ is not well-understood.  
In this direction, S.\ Wenger has observed that \emph{on a complete, 
separable metric space, every $1$-dimensional current in the sense of 
Ambrosio and Kirchheim \cite{AK} %\cite[Defn 3.1]{AK}
determines a derivation, where the underlying measure is the mass of the
current}.  Conversely it is easily shown that every derivation induces a
$1$-dimensional current.  The problem of classifying rank-$1$ modules of
derivations on $\R^k$ is therefore equivalent to the so-called ``Flat
Chain Conjecture'' about $1$-dimensional currents \cite[Sect 11]{AK}. 
For more about currents on metric spaces, see \cite{AK},
\cite{HardtdePauw}, and \cite{Lang}.

\subsection{Applications to Metric Spaces}

Though formulated for Euclidean spaces, the results in \S\ref{sect_rigidity}
are also surprisingly relevant to the general setting of {\em metric
measure spaces} --- that is, metric spaces equipped with Borel measures.

To obtain a reasonable setting for analysis, we restrict our focus to
spaces that support doubling measures.  Recall that a Borel measure
$\mu$ on $(X,d)$ is called {\em doubling} if there exists $\kappa \geq 1$
so that
\begin{equation} \label{eq_doubling}
0 \;<\; \mu(B(x,2r)) \;\leq\; \kappa \, \mu(B(x,r)) \;<\; \infty
\end{equation}
holds for all $x \in X$ and all $r > 0$.  Spaces supporting such measures
are particular cases of {\em spaces of homogeneous type} \cite{CoifmanWeiss};
in particular they have finite Hausdorff dimension and admit generalized
dyadic-cube decompositions \cite{Christ}.  Intuitively, the doubling
condition \eqref{eq_doubling} ensures that the space $X$ has good
scaling properties, from which we obtain a rich theory of ``zeroth order''
calculus --- that is, good analogues of Riesz potentials, the Lebesgue
differentiation theorem, and other elements of harmonic analysis.
%that includes the celebrated $H^1$-$BMO$ duality of Fefferman and Stein \cite{FeffermanStein}.

For a theory of first-order calculus, however, we also require the spaces
to support a generalized Poincar\'e inequality.  Indeed, on $\R^n$ the
inequality takes the form
$$
\dashint_{B(x,r)} |f - f_{B(x,r)}| \, dx \;\leq\;
C(n,p) \, r \, \Big( \dashint_{B(x,r)} |\nabla f|^p \,dx \Big)^{1/p}
$$
for all $p \geq 1$ and all Lipschitz $f :\R^n \to \R$.  Here
mean values are denoted by
$$
f_A \;:=\; \dashint_A f \,dx \;:=\; \frac{1}{|A|}\int_A f \,dx.
$$
So at sufficiently small scales, the inequality guarantees that 
``discrete gradients'' $(f - f_{B(x,r)})/r$ are comparable to the usual
gradients $|\nabla f|$ in an averaged sense.

There is an analogous formulation of the Poincar\'e inequality for
metric spaces supporting doubling measures.  In this setting, upper
gradients replace the usual gradients, but there remains the same
consequence that Lipschitz functions have good infinitesmal behavior
\cite{HeinonenKoskela}, \cite{Sh}. 
Indeed, Cheeger \cite{Cheeger} has shown that Lipschitz functions on
such spaces are also a.e.\ differentiable; see Theorem
\ref{thm_cheegerrademacher}.  As a result, these spaces admit generalized
differentiable structures, and Keith has extended the result for a more
general class of spaces \cite{Keith}.

In particular, Theorem \ref{thm_rigiditylowdim} gives rise to {\em
geometric rigidity} theorems in cases when the metric space embeds
isometrically into a Euclidean space.  For example, it implies an
affirmative answer to a conjecture of Cheeger \cite[Conj 4.63]{Cheeger}
when the generalized differentiable structure is $2$-dimensional ---
that is, the $N \leq 2$ case of Theorem \ref{thm_cheegerrademacher}.
The statement of the conjecture is technical, but combined with \cite[Thm
14.2]{Cheeger} it implies the following result.

\begin{thm} \label{thm_bilipembed}
Let $(X,d)$ be a complete metric space that supports a doubling measure 
$\mu$ and a $p$-Poincar\'e inequality.  If the corresponding 
measurable differentiable structure is (at most) $2$-dimensional and if 
there is an isometric embedding $\iota : X \to \R^N$, for some $N \in \N$, 
then the image $\iota(X^m)$ of each coordinate chart $X^m$ is an
$n(m)$-rectifiable set.
\end{thm}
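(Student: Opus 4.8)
The plan is to reduce the theorem to Cheeger's non-degeneracy conjecture \cite[Conj 4.63]{Cheeger} on each coordinate chart and then invoke \cite[Thm 14.2]{Cheeger} to package the conclusion. Concretely, the conjecture asserts that the differential of the isometric embedding $\iota$ with respect to the chart $(X^m,\varphi^m)$ has full rank $n(m)$ almost everywhere; our rigidity statement Theorem \ref{thm_rigiditylowdim} is exactly the tool that forbids this differential from collapsing when $n(m) \in \{1,2\}$.

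First I would set up the derivation-theoretic picture. On a space supporting a doubling measure and a $p$-Poincar\'e inequality, Cheeger's differentiable structure (Theorem \ref{thm_cheegerrademacher}) provides countably many charts $(X^m,\varphi^m)$ with $\varphi^m : X \to \R^{n(m)}$ Lipschitz, covering $X$ up to $\mu$-null sets, and the coordinate functions $\varphi^m_1,\dots,\varphi^m_{n(m)}$ determine a rank-$n(m)$ module of derivations over $X^m$. Fix such a chart and write $k := n(m) \le 2$; the case $k=0$ is trivial, so assume $k \in \{1,2\}$. Since $\iota$ is an isometric embedding, pullback $g \mapsto g \circ \iota$ is a Lipschitz-preserving bijection between Lipschitz functions on $\iota(X)$ and on $X$, and every Lipschitz function on $\iota(X) \subset \R^N$ extends to $\R^N$. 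Consequently the pushforward $\nu := \iota_*(\mu \llcorner X^m)$ is well defined and inherits a rank-$k$ module of derivations, with basis the pushed-forward coordinate derivations $\iota_*(\partial/\partial \varphi^m_i)$, $i = 1,\dots,k$.

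The core step is to transfer Theorem \ref{thm_rigiditylowdim} from $\R^k$ to the ambient $\R^N$. Each pushed derivation is represented by a $\nu$-measurable vector field $b_i : \R^N \to \R^N$, and at $\nu$-a.e.\ point the span $V(x) := \span\{b_1(x),\dots,b_k(x)\}$ is the associated $k$-dimensional tangent subspace. By measurable selection I would decompose $\iota(X^m)$ into countably many Borel pieces $E_j$ on each of which $V(x)$ lies within a small angle of a fixed $k$-plane $\Pi_j$. The orthogonal projection $P_j : \R^N \to \Pi_j \cong \R^k$ then does not annihilate the tangent directions, so $(P_j)_*(\nu \llcorner E_j)$ carries a rank-$k$ module of derivations on $\R^k$; Theorem \ref{thm_rigiditylowdim} forces it to be absolutely continuous to $\L^k$. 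Feeding this back through the control on $V(x)$ shows that $P_j$ is bi-Lipschitz on $E_j$ and that $E_j$ is a Lipschitz graph over its projection, hence $k$-rectifiable. Summing over $j$ and over charts yields that $\iota(X^m)$ is $n(m)$-rectifiable, which is the content of \cite[Conj 4.63]{Cheeger} in this dimension; combined with \cite[Thm 14.2]{Cheeger} this is the assertion of the theorem.

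The hard part is the reduction in the third paragraph, and specifically the $k=2$ instance. One must exclude the degenerate scenario in which $\nu$ concentrates on a purely unrectifiable Lebesgue-null set while still supporting two linearly independent derivations; ruling this out is precisely the rigidity furnished by the $k=2$ case of Theorem \ref{thm_rigiditylowdim}, whose proof rests on the Alberti--Cs\"ornyei--Preiss decomposition of planar null sets into horizontal and vertical parts. Making the decomposition into the pieces $E_j$ genuinely $\nu$-measurable, and verifying that absolute continuity of the projected measure upgrades to bi-Lipschitz rectifiability of $E_j$ rather than merely a dimension bound, are the technical points that require the full strength of the rigidity theorem.
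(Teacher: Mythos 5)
Your high-level plan (prove Cheeger's conjecture for charts of dimension at most $2$, then invoke \cite[Thm 14.2]{Cheeger}) is the same as the paper's, but you misstate what the conjecture says, and the step you substitute for it fails. Conjecture \ref{conj_cheegermeas} is not the assertion that $\iota(X^m)$ is rectifiable, nor that ``the differential of $\iota$ has full rank''; it is the assertion that the image of the chart under the \emph{chart map} has positive measure, $\H^{n(m)}(\xi^m(X^m)) > 0$. Accordingly, the paper never does any geometry in the ambient $\R^N$: it pushes $\mu \lfloor X^m$ forward under $\xi^m$ into $\R^{n(m)}$, shows that the pushed-forward Cheeger derivations remain linearly independent there (Lemma \ref{lemma_PIpushfwdLI}), concludes from Theorem \ref{thm_2dimLI} that $\xi^m_\#(\mu\lfloor X^m)$ is nonzero and absolutely continuous to $m_{n(m)}$ when $n(m)\leq 2$, hence $m_{n(m)}(\xi^m(X^m)) > 0$ --- which is the conjecture --- hence $\H^{n(m)}(X^m) > 0$, and only then cites Theorem \ref{thm_nonembed} (Cheeger's Thm 14.2), which is exactly where the isometric embedding is used, to obtain rectifiability of $\iota(X^m)$. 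Your proposal instead pushes forward under $\iota$ and tries to prove rectifiability of $\iota(X^m)$ in $\R^N$ by hand; that amounts to reproving Theorem \ref{thm_nonembed}, and your closing appeal to \cite[Thm 14.2]{Cheeger} afterwards is then vacuous, since what you claim to have established is already the theorem's conclusion.

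The step that breaks is: ``absolute continuity of $(P_j)_\#(\nu\lfloor E_j)$, together with the control on $V(x)$, shows that $P_j$ is bi-Lipschitz on $E_j$ and that $E_j$ is a Lipschitz graph.'' This inference uses only the three hypotheses you actually have at that point --- a rank-$k$ module, derivation vector fields spanning a subspace near $\Pi_j$, and absolute continuity of the projected measure --- and at that level of generality it is false. Take $N=2$, $k=1$, $E = [0,1]\times C$ with $C$ the middle-thirds Cantor set, and $\nu = m_1 \times \mu_C$ where $\mu_C$ is the Cantor measure. A Fubini argument shows $\partial_1 \in \U(\R^2,\nu)$ and it is nonzero, so the module has rank exactly $1$ (rank at most $1$ follows from Theorem \ref{thm_2dimLI}, since $\nu \perp m_2$); its vector field is identically $e_1$, so $V(x)$ coincides with the horizontal line $\Pi$; and $(\proj_\Pi)_\#\nu = m_1\lfloor[0,1]$ is absolutely continuous. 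Yet $\proj_\Pi$ is nowhere injective on $E$, $E$ is not a graph over $\Pi$, and $E$ is not $1$-rectifiable at all (its Hausdorff dimension is $1 + \log 2/\log 3$). The rigidity theorem constrains the measure \emph{downstairs}; it carries no information about the fibers of the projection, which is precisely what rectifiability of the set upstairs requires. Ruling out such fiber behavior for $\iota_\#(\mu\lfloor X^m)$ genuinely needs the isometric-embedding hypothesis and the structure theory behind Cheeger's Theorem 14.2, which is why the paper routes the argument through the chart image and quotes that theorem rather than re-deriving it.
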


In the context of geometric measure theory, it is a fact that every
$n$-rectifiable set in $\R^N$ agrees with a countable union of
$n$-dimensional, $C^1$-smooth submanifolds, up to a set of zero
$n$-dimensional Hausdorff measure  \cite[Thm 15.21]{Mattila}.  Theorem
\ref{thm_bilipembed} therefore asserts that $2$-dimensional spaces
supporting Euclidean metrics and nontrivial derivations must also have
locally Euclidean geometry (up to negligible
subsets).

As a special case, Theorem \ref{thm_rigiditylowdim} implies another 
rigidity theorem for measures in the plane.  The case of $\R$ was proven 
by Bj\"orn, Buckley, and Keith \cite{BBK}.

\begin{thm} \label{thm_euclabsocty}
Let $\mu$ be a doubling measure on $\R^2$ whose support is dense in 
$\R^2$.  If $(\R^2, |\cdot |, \mu)$ supports a $p$-Poincar\'e 
inequality, then $\mu$ is absolutely continuous to Lebesgue measure.
\end{thm}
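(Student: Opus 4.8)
The plan is to reduce Theorem \ref{thm_euclabsocty} to Theorem \ref{thm_rigiditylowdim} by showing that the hypotheses force $\mu$ to induce a rank-$2$ module of derivations on $(\R^2, |\cdot|)$. Since $\spt\mu$ is dense we have $\spt\mu = \R^2$, so the ambient metric space is genuinely $(\R^2, |\cdot|)$ and its bounded Lipschitz functions are exactly the usual Euclidean ones; in particular the coordinate functions $x_1, x_2$ and all linear functionals are admissible test functions. Once the module is shown to have rank $2$, Theorem \ref{thm_rigiditylowdim} (with $k = 2$) yields that $\mu$ is absolutely continuous to Lebesgue measure and completes the proof. The case of $\R$ is already covered by \cite{BBK}, and the two ingredients below --- an upper and a lower bound on the rank --- run in parallel to that argument.

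For the upper bound I would use the general fact that on $(\R^k, |\cdot|)$ every module of derivations with respect to a Radon measure has rank at most $k$: a derivation is determined by its values on the coordinate functions, and the Leibniz rule together with the weak-$*$ Lipschitz bound confines those values to a $k$-dimensional space $\mu$-a.e. Thus the rank here is at most $2$ automatically, with no appeal to the Poincar\'e inequality.

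The substantive step is the lower bound. Here I would invoke Cheeger's theorem (Theorem \ref{thm_cheegerrademacher}): since $\mu$ is doubling and $(\R^2, |\cdot|, \mu)$ supports a $p$-Poincar\'e inequality, the space carries a measurable differentiable structure, and each chart of analytic dimension $N$ gives rise to $N$ independent derivations, namely the partial-derivative operators of the chart. It therefore suffices to show that this dimension is at least $2$ on a set of positive $\mu$-measure. Suppose not: then on a positive-measure set $A$ the cotangent fibre has dimension at most $1$, so the Cheeger differentials of $x_1$ and $x_2$ are, at each $x_0 \in A$, two vectors lying in a space of dimension at most $1$; hence there is a nonzero pair $(a(x_0), b(x_0))$ with $a(x_0)\, dx_1(x_0) + b(x_0)\, dx_2(x_0) = 0$. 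Fixing such an $x_0$ and considering the linear function $f = a(x_0) x_1 + b(x_0) x_2$, linearity of the Cheeger differential gives $df(x_0) = 0$, and since in a Poincar\'e space $\lip f$ is comparable to $|df|$ this forces $\lip f(x_0) = 0$. But $f$ is a nonzero linear function, so $\lip f \equiv |(a(x_0), b(x_0))| > 0$ at every point of $\R^2$, and because $\spt\mu = \R^2$ this positive value is attained at $x_0$ itself; this contradiction forces the dimension to be $2$.

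Combining the two bounds, $\mu$ induces a rank-$2$ module of derivations on $(\R^2, |\cdot|)$, and Theorem \ref{thm_rigiditylowdim} then gives $\mu \ll \L^2$. The main obstacle is the lower bound, and specifically the clean use of the Poincar\'e inequality through the comparability $\lip f \approx |df|$: this is precisely what rules out the degenerate, lower-dimensional differentiable structures --- such as those carried by doubling measures concentrated on lines --- and it is exactly where the density of $\spt\mu$ enters, since it guarantees that the Euclidean Lipschitz constant of a linear function is seen $\mu$-a.e.
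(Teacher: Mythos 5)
Your proof is correct, and its overall architecture --- extract derivations from Cheeger's differentiability theorem (Theorems \ref{thm_cheegerrademacher} and \ref{thm_diffderiv}), show the module has rank $2$, and finish with Theorem \ref{thm_rigiditylowdim} --- is the same as the paper's. However, your mechanism for the crucial step, ruling out one-dimensional charts, is genuinely different. The paper first invokes a theorem of Keith \cite[Thm 2.7]{KeithCoord} to replace the Cheeger coordinates on each chart by distance functions $\xi^n(x) = |x-p|$, and then derives a contradiction by testing the differentiability condition \eqref{eq_diffcond} on $f(x)=x_2$ along two sequences that approach a density point in transverse directions; the special distance-function form of the coordinate is what makes that computation close. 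You instead work with an arbitrary one-dimensional chart: since $d^nx_1(x_0)$ and $d^nx_2(x_0)$ are scalars, some nonzero combination $f = a x_1 + b x_2$ has $d^nf(x_0)=0$, and then \eqref{eq_diffcond} forces $\Lip[f](x_0)=0$, contradicting $\Lip[f] \equiv |(a,b)| > 0$, which is seen at $x_0$ precisely because the underlying space is all of $\R^2$ (density of the support) --- the same place where density enters the paper's argument, as you correctly note. Your route buys self-containedness: it needs no information about the chart map $\xi^n$ and avoids the external citation to Keith's coordinate theorem, using only the linear structure of the plane and linearity of Cheeger differentials; indeed you do not even need the comparability $\lip[f] \approx |d^nf|$, since $d^nf(x_0)=0$ together with \eqref{eq_diffcond} gives $\Lip[f](x_0)=0$ directly. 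Three small tightenings: (i) the point $x_0$ should be chosen as a common differentiability point of $x_1$ and $x_2$, so that uniqueness/linearity of the differential applies to the $x_0$-dependent combination $f$ at that very point; (ii) the rank upper bound you call ``automatic'' is really the content of Lemma \ref{lemma_dimrank}, which requires the determinant-style computation given there (though for the application only the lower bound matters, since the proof of Theorem \ref{thm_rigiditylowdim} only uses the existence of an independent pair); and (iii) to obtain a pair that is linearly independent over $\Li(\R^2,\mu)$, rather than over a single chart, one should glue the chart derivations across all charts via Lemma \ref{lemma_gluing} --- your argument does show \emph{every} chart is two-dimensional, so this goes through, and the paper elides the same step.
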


The hypotheses of a Poincar\'e inequality and the density of the support
of $\mu$ are necessary for the theorem.  Namely, there exist doubling
measures on $\R^n$ that are singular to Lebesgue measure; for examples,
see \cite{KaufmanWu}, \cite{Wu}, and \cite{GKS}.  Moreover, certain
non-self-similar Sierpi\'nski carpets in $\R^2$ support both a doubling
measure (as restricted to the carpet) and a $p$-Poincar\'e inequality
\cite{MackayTysonWildrick}. However, such measures are {\em porous} over
all of $\R^2$, so Theorem \ref{thm_euclabsocty} does not apply.

We note that Keith has proven \cite[Conj 4.63]{Cheeger} for 
$1$-dimensional differentiable structures, and that our methods are
independent of his.  His proof relies on a fact about sets of
non-differentiability of Lipschitz functions on $\R$ \cite{PreissTiser}.
Alberti, Cs\"ornyei, and Preiss have recently announced an analogous
fact in $\R^2$ \cite[Thm 7.5]{ACP} and from this, Keith's techniques
will also prove the $2$-dimensional case of Cheeger's conjecture.

%==================================
\subsection{Plan of the Paper \& Acknowledgments}

Section 2 begins by introducing terminology and recalling basic facts
about Lipschitz functions.  It also contains the basics of Weaver's
theory, clarifies the equivalence of definitions from \cite{WeaverED}
and \cite{HeinonenNC}, and gives new facts about derivations on metric
measure spaces.  

The case of derivations on $1$-dimensional sets in $\R^n$ is treated in 
Section 3; this includes the setting of $1$-rectifiable sets.  In Section 
4 we discuss the structure of Lebesgue null sets in $\R^2$ and the rigidity
of measures that induce rank-$2$ modules of derivations.  Section 5 begins 
with basic facts about spaces admitting a Poincar\'e inequality and 
concludes with a proof of the $2$-dimensional case of Cheeger's conjecture;
we also explore the relationship between several open problems.

The author would like to thank Mario Bonk and Pekka Pankka for many valuable discussions. %about derivations and other related topics.  
He would also like to thank Bruce Kleiner and Stefan Wenger for their useful comments on a preliminary report of this work.

The author is particularly indebted to to his late advisor, Juha Heinonen, for suggesting this direction of research and more generally, for all of the help and guidance over the years.

The author was supported by NSF RTG grant \#0602191.

%=========================================================================
\section{Preliminaries} \label{sect_basics}

\subsection{Notation and Preliminaries}

The identity map on a set $S$ is denoted by $\id_S$.  For real-valued functions $f$ and $g$, we denote their pointwise minimum and maximum as $f \wedge g$ and $f \vee g$, respectively.

For a measure $\mu$ on a set $X$ and a $\mu$-measurable subset $A$ of 
$X$, the restriction measure $\mu \lfloor A$ is defined as 
$$
(\mu\lfloor A)(E) \;:=\; \mu(A \cap E)
$$ 
for all $\mu$-measurable subsets $E$ in $X$. %$\mu$-measurable subsets $E$ of $X$.
If $\mu = \mu \lfloor A$, then we say that $\mu$ is \emph{concentrated} on $A$.
A collection $\{X_i\}_{i=1}^\infty$ of $\mu$-measurable subsets of $X$ is %said to be 
a \emph{$\mu$-measurable decomposition} of $X$ if $\mu$ is concentrated on $\bigcup_{i=1}^\infty X_i$ and if $\mu(X_i \cap X_j) = 0$ holds whenever $i \neq j$.

Given $p \in [1,\infty]$ and a measure $\mu$ on a set $X$, the standard 
norm on the Banach space $L^p(X,\mu)$ is denoted by $\| \cdot \|_{\mu,p}$.  
We will write $\|f\|_\infty$ for the supremum norm of a function $f$, 
whenever it exists.  As indicated before, given a function
$u \in L^1_{loc}(X,\mu)$ and a subset $A \subset X$ with
$0 < \mu(A) < \infty$, its mean value is
$$
u_A \;:=\; 
\dashint_A u \,d\mu \;=\; 
\frac{1}{\mu(A)} \int_A u \,d\mu.
$$
On a metric space $X$, a measure $\mu$ is Radon if it is Borel regular
and if balls have positive finite $\mu$-measure.  We will denote
$\a$-dimensional Hausdorff measure on a metric space $X$ by $\H^\a_X$.
For $X = \R^n$, we write $\H^\a = \H^\a_X$ and $m_n$ for the Lebesgue
measure.

The standard basis of vectors on $\R^n$ is denoted by $\{ e_1, e_2,
\ldots, e_n \}$.  If $V$ is a linear subspace of $\R^n$, then $\proj_V : 
\R^n \to V$ is the orthogonal projection map onto $V$.    
For $j = 1, 2, \ldots, n$, the standard partial differential operators on 
$\R^n$ are denoted by $\partial_j := \partial/\partial x_j$.  The 
class of smooth functions on $\R^n$ with compact support is denoted by
$C^\infty_0(\R^n)$.

%==================================
\subsection{Lipschitz Functions}

Let $(X,\rho_X)$ and $(Y,\rho_Y)$ be metric spaces.  
Recall that a function $f : X \to Y$ is Lipschitz if 
\begin{equation} \label{eq_lipconst}
L(f) \;:=\;
\sup\left\{ \frac{\rho_Y(f(x),f(y))}{\rho_X(x,y)} \,:\, x, y \in X, \, x \neq y \right\} \;<\; \infty
\end{equation}
and we refer to $L(f)$ as the \emph{Lipschitz constant} of $f$.

We write $\Lip(X;Y)$ for the space of Lipschitz maps from $X$ to $Y$ and $\Lip_b(X;Y)$ for the subspace of bounded Lipschitz maps in $\Lip(X;Y)$.  For $Y = \R$, we write 
$$
\Lip(X) \,:=\, \Lip(X;\R) \; \text{ and } \; \Lip_b(X) \,:=\, \Lip_b(X;\R).
$$

We now recall some basic properties of Lipschitz maps.  Their proofs are 
elementary and we omit them.

\begin{lemma} \label{lemma_lipprops}
Let $X$, $Y$, and $Z$ be metric spaces.
\begin{enumerate}
\item
If $f \in \Lip(X;Y)$ and $g \in \Lip(Y;Z)$, then $g \circ f \in \Lip(X;Z)$.
\item
$\Lip(X)$ is a vector space, and $\Lip_b(X)$ is an algebra over $\R$.
\item
If $f$ and $g$ are functions in $\Lip(X)$, then so are $f \vee g$ and $f \wedge g$.
\item
Let $A$ be a closed subset of $X$.  If $f \in \Lip(A)$, then there exists $F \in \Lip(X)$ so that $F|A = f$ and $L(F) = L(f)$.
\end{enumerate}
\end{lemma}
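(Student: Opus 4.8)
The plan is to verify each of the four properties by direct estimation, reserving the only substantial argument for the extension result in part (4). Parts (1)--(3) follow immediately from the triangle inequality and the definition \eqref{eq_lipconst} of the Lipschitz constant, whereas part (4) is the classical McShane--Whitney extension, which requires exhibiting an explicit extension and then checking that it neither enlarges the Lipschitz constant nor disturbs the boundary values.

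For part (1) I would simply chain the two Lipschitz estimates: for $x, x' \in X$,
$$\rho_Z\big(g(f(x)), g(f(x'))\big) \;\le\; L(g)\,\rho_Y\big(f(x), f(x')\big) \;\le\; L(g)\,L(f)\,\rho_X(x,x'),$$
so $g \circ f \in \Lip(X;Z)$ with $L(g\circ f) \le L(g)L(f)$. For part (2), linearity of the difference quotient gives $L(af + bg) \le |a|\,L(f) + |b|\,L(g)$, so $\Lip(X)$ is a vector space; for the algebra structure on $\Lipb(X)$ I would insert and subtract a cross term,
$$|f(x)g(x) - f(y)g(y)| \;\le\; \|f\|_\infty\,|g(x)-g(y)| \,+\, \|g\|_\infty\,|f(x)-f(y)|,$$
which shows the product is bounded and Lipschitz (here boundedness is exactly what keeps the constants finite), so $\Lipb(X)$ is closed under multiplication as well as addition and scalar multiplication. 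For part (3), the cleanest route is to write $f \vee g = \tfrac12(f + g + |f-g|)$ and $f \wedge g = \tfrac12(f + g - |f-g|)$ and observe that $|f-g|$ is Lipschitz, being the composition of $f - g \in \Lip(X)$ with the $1$-Lipschitz map $t \mapsto |t|$ via parts (1) and (2); alternatively one checks directly that $L(f\vee g)$ and $L(f \wedge g)$ are at most $\max\{L(f), L(g)\}$.

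The main obstacle is part (4), the extension. I would define
$$F(x) \;:=\; \inf_{a \in A}\big(f(a) + L(f)\,\rho_X(x,a)\big)$$
and argue in three steps. First, $F$ is real-valued: fixing any $a_0 \in A$, the Lipschitz bound on $f$ together with the triangle inequality shows each term is bounded below by $f(a_0) - L(f)\,\rho_X(x,a_0)$, so the infimum is finite. Second, $F|A = f$: for $x \in A$ the admissible choice $a = x$ gives $F(x) \le f(x)$, while the bound $f(a) \ge f(x) - L(f)\,\rho_X(x,a)$ forces every term to be at least $f(x)$, hence $F(x) = f(x)$. Third, $F$ is Lipschitz with $L(F) \le L(f)$: for $x, y \in X$ and any $a \in A$,
$$F(x) \;\le\; f(a) + L(f)\,\rho_X(x,a) \;\le\; f(a) + L(f)\,\rho_X(y,a) + L(f)\,\rho_X(x,y),$$
and taking the infimum over $a$ yields $F(x) - F(y) \le L(f)\,\rho_X(x,y)$; symmetry then gives $L(F) \le L(f)$. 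Since $F$ restricts to $f$ on $A$, the reverse inequality $L(F) \ge L(f)$ is automatic, so $L(F) = L(f)$. The only delicate point is the finiteness of the infimum, which is precisely where the Lipschitz bound on $f$ and the triangle inequality must be combined; note that the construction needs only $A$ nonempty, so closedness is not essential for this particular formula.
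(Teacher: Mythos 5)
Your proof is correct. The paper itself omits the proof of this lemma as elementary, but it does display exactly the extension formula $F(x) := \inf\{f(a) + L(f)\cdot d(x,a) : a \in A\}$ you use for part (4) (attributing it to McShane and Whitney), and your verifications of parts (1)--(3) and of the three properties of $F$ are the standard, correct arguments the paper has in mind.
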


Part (4) of Lemma \ref{lemma_lipprops} is known as the 
\emph{McShane-Whitney extension} of a Lipschitz function \cite{McShane}, 
\cite{Whitney}.  For $f \in \Lip(A)$, an explicit formula is
$$
F(x) \;:=\; \inf\{ f(a) + L(f) \cdot d(x,a) : a \in A \}.
$$
Combining Parts (3) and (4), we obtain an analogous fact for the space
$\Lip_b(X)$.

\begin{lemma} \label{lemma_bddmcshane}
Let $A$ be a closed subset of $X$.  If $f \in \Lip_b(A)$, then there 
exists $f^A \in \Lip_b(X)$ so that $f^A|A = f$, $L(f^A) = L(f)$, and 
$\|f^A\|_\infty = \|f\|_\infty$.
\end{lemma}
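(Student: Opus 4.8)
The plan is to build $f^A$ in two steps: first extend $f$ to all of $X$ using the McShane--Whitney construction of Lemma \ref{lemma_lipprops}(4), and then truncate the extension to the range of $f$ using the lattice operations of Lemma \ref{lemma_lipprops}(3). The key point to keep in mind is that the plain McShane extension controls the Lipschitz constant but says nothing about the supremum norm, so the truncation is exactly what repairs the bound $\|f^A\|_\infty = \|f\|_\infty$ without disturbing the Lipschitz constant.

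First I would set $M := \|f\|_\infty$, so that $-M \le f \le M$ on $A$, and apply Lemma \ref{lemma_lipprops}(4) to obtain $F \in \Lip(X)$ with $F|A = f$ and $L(F) = L(f)$. Next I would truncate $F$ to the interval $[-M,M]$ by defining
$$
f^A \;:=\; (F \vee (-M)) \wedge M.
$$
Since the constant functions $\pm M$ lie in $\Lip(X)$, Lemma \ref{lemma_lipprops}(3) guarantees $f^A \in \Lip(X)$; and because taking the pointwise maximum or minimum of a Lipschitz function with a constant cannot increase its Lipschitz constant, we get $L(f^A) \le L(F) = L(f)$, while by construction $\|f^A\|_\infty \le M$.

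It then remains to verify that the truncation alters neither the values on $A$ nor the two extremal constants. Because $|f| \le M$ on $A$, we have $-M \le F \le M$ there, so the two lattice operations leave $F$ untouched on $A$; hence $f^A|A = f$. In particular $f^A$ is a bounded Lipschitz extension of $f$, which forces $L(f^A) \ge L(f)$ and $\|f^A\|_\infty \ge \|f\|_\infty = M$. Combining these reverse inequalities with the bounds from the previous step gives $L(f^A) = L(f)$ and $\|f^A\|_\infty = M$, as claimed, and shows $f^A \in \Lip_b(X)$.

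The only place that genuinely requires care is recognizing that the raw McShane extension need not respect the upper bound $M$ (it respects the lower bound $\inf_A f$ but can overshoot above), so one really does need the separate truncation step rather than quoting Lemma \ref{lemma_lipprops}(4) alone. Once this is noticed, the argument is routine, and I do not expect any substantive obstacle beyond checking that the lattice operations preserve the Lipschitz constant when paired with constants.
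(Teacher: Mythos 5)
Your proof is correct and follows exactly the route the paper indicates, namely combining the McShane--Whitney extension of Lemma \ref{lemma_lipprops}(4) with truncation via the lattice operations of Lemma \ref{lemma_lipprops}(3). The paper itself gives only this one-sentence indication, and your write-up fills in precisely those details (including the correct observation that the raw extension can overshoot $\|f\|_\infty$, so the truncation step is genuinely needed).
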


We refer to $f^A$ as the \emph{bounded McShane extension} of $f$. Note that 
$\Lip_b(X)$ is a Banach space with respect to the norm
\begin{equation} \label{eq_lipnorm}
\|f\|_{\Lip} \;:=\; \|f\|_\infty \vee L(f).
\end{equation}
For a proof, see \cite[Prop 1.6.2(a)]{WeaverBook}.  In fact, $\Lip_b(X)$
is a {\em dual} Banach space \cite{AE}; see also \cite[Prop 2 \& 8]{WeaverLA}.

\begin{lemma}[Weaver, 1996] \label{lemma_weakstarlip}
Let $X$ be a metric space.
\begin{enumerate}
\item
$\Lip_b(X)$ is a dual Banach space with respect to the norm $\| \cdot \|_{\Lip}$.
\item
If $\{f_\a\}_{\a \in I}$ is a bounded net in $\Lip_b(X)$, then $f_\a$ converges weak-$*$ to $f$ in $\Lip_b(X)$ if and only if $f_\a$ converges pointwise to $f$.
\end{enumerate}
\end{lemma}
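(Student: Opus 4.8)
The plan is to realize $\Lip_b(X)$ isometrically as a weak-$*$ closed subspace of a concrete dual space, after which both assertions fall out. Write $\rho$ for the metric on $X$ and set $Y := X \sqcup \tilde{X}$, the disjoint union of $X$ with the off-diagonal $\tilde{X} := \{(x,y) \in X \times X : x \neq y\}$. I would define a map $\Phi : \Lip_b(X) \to \ell^\infty(Y)$ by
$$
(\Phi f)(w) \;:=\; \begin{cases} f(x), & w = x \in X, \\ \dfrac{f(x) - f(y)}{\rho(x,y)}, & w = (x,y) \in \tilde{X}. \end{cases}
$$
The $X$-coordinates recover $\|f\|_\infty$ and the $\tilde{X}$-coordinates recover $L(f)$, so by \eqref{eq_lipnorm} one has $\|\Phi f\|_{\ell^\infty(Y)} = \|f\|_\infty \vee L(f) = \|f\|_{\Lip}$; that is, $\Phi$ is a linear isometry onto its image $Z := \Phi(\Lip_b(X))$. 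Since $\ell^\infty(Y) = (\ell^1(Y))^*$ is a dual Banach space, it suffices to prove that $Z$ is weak-$*$ closed: a weak-$*$ closed subspace of a dual space is canonically the dual of the quotient of the predual by its pre-annihilator, and the isometry $\Phi$ then transports this duality back to $\Lip_b(X)$, giving (1).

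To establish weak-$*$ closedness I would invoke the Krein--Smulian theorem, which reduces the task to showing that $Z \cap r\overline{B}$ is weak-$*$ closed for each $r > 0$, where $\overline{B}$ is the closed unit ball of $\ell^\infty(Y)$. On bounded sets, weak-$*$ convergence in $(\ell^1(Y))^*$ is precisely coordinatewise convergence on $Y$; so suppose $\Phi f_\alpha \to g$ weak-$*$ with $\|\Phi f_\alpha\|_{\ell^\infty(Y)} \leq r$. The $X$-coordinates define $f(x) := \lim_\alpha f_\alpha(x) = g(x)$, and the uniform estimates $|f_\alpha(x)| \leq r$ and $|f_\alpha(x) - f_\alpha(y)| \leq r\,\rho(x,y)$ pass to the limit, so $f \in \Lip_b(X)$ with $\|f\|_{\Lip} \leq r$. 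The $\tilde{X}$-coordinates then force $\Phi f = g$, whence $g \in Z \cap r\overline{B}$. Thus every ball-section is weak-$*$ closed and Krein--Smulian yields the claim. (The predual produced this way may alternatively be identified with the Arens--Eells space of molecules \cite{AE}.)

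For part (2) I would first note that, under $\Phi$, the weak-$*$ topology of $Z$ coincides with the restriction of $\sigma(\ell^\infty(Y), \ell^1(Y))$, since both are generated by the coordinate functionals $g \mapsto g(w)$, $w \in Y$; in particular the point-evaluations $\delta_x$ ($x \in X$) and the difference quotients lie in the predual. Hence if $f_\alpha \to f$ weak-$*$, testing against each $\delta_x$ gives $f_\alpha(x) \to f(x)$, which is pointwise convergence. Conversely, if $f_\alpha \to f$ pointwise with $\sup_\alpha \|f_\alpha\|_{\Lip} < \infty$, then $f \in \Lip_b(X)$ exactly as above, and $\Phi f_\alpha \to \Phi f$ coordinatewise on all of $Y$ --- the $\tilde{X}$-coordinates converge because each is a fixed linear combination of two point-evaluations --- so $\Phi f_\alpha \to \Phi f$ weak-$*$, i.e.\ $f_\alpha \to f$ weak-$*$ in $\Lip_b(X)$.

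I expect the one genuinely nontrivial step to be the weak-$*$ closedness in part (1). Weak-$*$ closedness cannot be verified against bounded nets directly, and it is Krein--Smulian that legitimizes reducing to bounded ball-sections; boundedness is in turn exactly what allows the uniform Lipschitz bound to descend to the limit function. By contrast, the remaining ingredients --- that $\Phi$ is an isometry, that the limit lies in $\Lip_b(X)$, and that the two weak-$*$ topologies agree --- are routine once the embedding is set up.
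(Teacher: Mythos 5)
Your proof is correct, but it takes a genuinely different route from the paper, which in fact offers no proof at all: Lemma \ref{lemma_weakstarlip} is quoted from Arens--Eells \cite{AE} and Weaver \cite{WeaverLA}, where the duality is obtained by constructing an \emph{explicit} predual --- the Arens--Eells space of ``molecules'' (finitely supported formal differences of point masses), of which spaces of Lipschitz functions are shown to be the dual; this is also the construction the paper leans on later. You instead realize $\Lipb(X)$ isometrically inside $\ell^\infty(Y)$ via values and difference quotients, prove the image $Z$ is weak-$*$ closed by Krein--Smulian (correctly: the ball-sections are weak-$*$ closed because weak-$*$ convergence implies coordinatewise convergence and the uniform sup and Lipschitz bounds pass to the limit), and identify $Z$ with the dual of $\ell^1(Y)/Z_\perp$; part (2) then comes essentially for free, since the point evaluations and difference-quotient functionals lie in the predual, and bounded coordinatewise convergence in $\ell^\infty(Y)$ is weak-$*$ convergence. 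What your approach buys is brevity and self-containedness: no explicit predual must be built, and the equivalence of weak-$*$ and pointwise convergence for bounded nets is immediate from the choice of coordinates. What it gives up is concreteness of the predual: the paper needs in Lemma \ref{lemma_arenseells} that the predual is separable whenever $X$ is, and there this is read off from the explicit Arens--Eells construction \cite[Defn 2.2.1]{WeaverBook}; with your abstract predual $\ell^1(Y)/Z_\perp$ one would need a short extra argument, e.g.\ that the cosets of $\delta_x$, for $x$ in a countable dense subset of $X$, have dense linear span in the quotient, using the bound $\|\delta_x - \delta_y + Z_\perp\| \leq \rho(x,y)$. Finally, it is worth stating explicitly in your write-up that statement (2) presupposes a choice of predual and that you prove it for the one you construct --- this is all the lemma and the paper ever use, and it also shows that any predual satisfying (2) induces the same topology on bounded sets.
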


The next lemma follows from the first lemma and from several explicit
constructions in \cite[Sect 1.7 \& 2.2]{WeaverBook}.

\begin{lemma} \label{lemma_arenseells}
If $(X,\rho)$ is a separable metric space, then the pre-dual of $\Lip_b(X)$ is a separable Banach space.
\end{lemma}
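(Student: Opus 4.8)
The plan is to use the standard duality principle that a Banach space $E$ is separable if and only if the closed unit ball of its dual $E^*$ is metrizable in the weak-$*$ topology. Write $\mathcal{A}$ for the pre-dual of $\Lip_b(X)$, so that $\mathcal{A}^* = \Lip_b(X)$; since $\mathcal{A}$ is by definition a Banach space, it remains only to establish its separability, and for this it suffices to show that the closed unit ball
$$
B \;:=\; \{ f \in \Lip_b(X) : \|f\|_{\Lip} \leq 1 \}
$$
is metrizable for the weak-$*$ topology it inherits as a subset of $\mathcal{A}^*$.

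First I would identify the weak-$*$ topology on $B$ with the topology of pointwise convergence. By Lemma \ref{lemma_weakstarlip}(2), a bounded net in $\Lip_b(X)$ converges weak-$*$ if and only if it converges pointwise; hence on the bounded set $B$ these two topologies have exactly the same convergent nets. Since a topology on a set is determined by its convergent nets, the weak-$*$ topology and the topology of pointwise convergence coincide on $B$. In particular, it suffices to metrize the pointwise topology on $B$.

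Next I would use the separability of $X$. Fix a countable dense set $\{x_n\}_{n \in \N} \subset X$ and define, for $f, g \in B$,
$$
d(f,g) \;:=\; \sum_{n=1}^\infty 2^{-n} \, |f(x_n) - g(x_n)|.
$$
This is a genuine metric on $B$: if $d(f,g) = 0$ then $f$ and $g$ agree on the dense set $\{x_n\}$, and since both are (continuous) Lipschitz functions they agree on all of $X$. The key point is that $d$ induces the pointwise topology on $B$. One inclusion is immediate, since $d$-convergence forces convergence at each $x_n$. For the converse, the crucial observation is that every $f \in B$ satisfies $L(f) \leq 1$, so the family $B$ is equi-Lipschitz: given $x \in X$ and $\e > 0$, choosing $x_n$ with $\rho(x, x_n) < \e$ yields, for all $f, g \in B$, the estimate $|f(x) - g(x)| \leq |f(x_n) - g(x_n)| + 2\e$. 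Thus convergence at the points $x_n$ upgrades to convergence at every $x \in X$, so $d$-convergence agrees with pointwise convergence and $(B, w^*)$ is metrizable.

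Finally, I would invoke the duality principle to conclude that $\mathcal{A}$ is separable. I expect the main obstacle to be the careful bookkeeping in identifying the weak-$*$ topology with the pointwise topology on $B$ — in particular, using that nets (rather than sequences) determine a topology, so that Lemma \ref{lemma_weakstarlip}(2) genuinely applies — together with verifying the relevant direction of the duality principle, which runs as follows: weak-$*$ metrizability of $B$ produces a countable weak-$*$ neighborhood base at $0$, each member of which is determined by finitely many elements of $\mathcal{A}$; the countable union $S$ of these finite sets has trivial annihilator in $\mathcal{A}^*$, so by Hahn--Banach its closed linear span is all of $\mathcal{A}$, whence the rational span of $S$ is a countable dense subset. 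Alternatively, one could bypass the abstract principle and argue directly from the explicit construction of the pre-dual in \cite[Sect 1.7 \& 2.2]{WeaverBook}, in which $\mathcal{A}$ is realized as the closed linear span of the evaluation functionals $\{e_x : x \in X\}$; continuity of $x \mapsto e_x$ together with density of $\{x_n\}$ then shows that the rational linear combinations of $\{e_{x_n}\}$ form a countable dense subset of $\mathcal{A}$.
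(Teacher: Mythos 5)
Your proof is correct, but it takes a genuinely different route from the paper's. The paper argues via Weaver's explicit construction of the pre-dual: it truncates the metric ($\rho_2 := \rho \wedge 2$), adjoins a base point to form $X_2^+$, identifies $\Lip_b(X) \cong \Lip_0(X_2^+)$, and then quotes the fact that the pre-dual of $\Lip_0(X_2^+)$ is the Arens--Eells space $AE(X_2^+)$, which is separable by construction when $X$ is. Your primary argument instead uses only the \emph{existence} of a pre-dual $\mathcal{A}$ (Part (1) of Lemma \ref{lemma_weakstarlip}) together with the standard duality criterion: a Banach space is separable if and only if the closed unit ball of its dual is weak-$*$ metrizable --- and you correctly sketch the less-quoted direction of that criterion, which is the one you need. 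The real content is then your metrization of the pointwise topology on the unit ball $B$, where the equi-Lipschitz bound $L(f) \leq 1$ upgrades convergence on a countable dense set to convergence at every point, and Part (2) of Lemma \ref{lemma_weakstarlip} (applied to nets, as you rightly insist, since metrizability is exactly what is at issue) identifies the pointwise topology on $B$ with the weak-$*$ topology. What your approach buys is independence from the Arens--Eells machinery; it would apply to any dual Banach space whose bounded weak-$*$ convergence coincides with pointwise convergence. What the paper's approach buys is an explicit identification of the pre-dual, which is more information than bare separability; your closing alternative, via the closed span of the evaluation functionals $\{e_{x_n}\}$, is essentially a re-derivation of the paper's argument.

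One small omission in the metrization step: you prove that $d$-convergence implies pointwise convergence (the immediate step plus the equi-Lipschitz upgrade), but you never address the reverse implication, that a net in $B$ converging pointwise --- in particular at each $x_n$ --- converges in $d$. This is the easy direction: the terms $2^{-n}|f_\a(x_n) - f(x_n)|$ are uniformly dominated by $2^{1-n}$, so one splits the series at a large $N$ and handles the finitely many remaining terms by directedness. Without it you only know that the pointwise topology is coarser than the metric topology, which is not yet metrizability; with it, the argument is complete. This is a one-line fix, not a flaw in the approach.
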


\begin{proof}%[Proof of Lemma \ref{lemma_arenseells}]
Clearly, $\rho_2 := \rho \wedge 2$ is a metric on $X$ and the metric
space $X_2 := (X,\rho_2)$ is separable because $(X,\rho)$ is separable.
By \cite[Prop 1.7.1]{WeaverBook}, we have the isometric isomorphism
$\Lip_b(X) \cong \Lip_b(X_2)$.

Let $X_2^+$ be the set of all points in $X_2$ as well as one additional
point $e$, so $X_2^+ = X \cup \{e\}$.  We may extend $\rho_2$ to a metric
$\rho_2^+$ on $X_2^+$ by the rule
$$
\rho_2^+(x,e) \;:=\;
\left\{\begin{array}{rl}
\diam(X_2), & x \neq e \\
0, & x = e.
\end{array}\right.
$$
Now consider the space of functions given by
$$
\Lip_0(X_2^+) \;:=\; \{ f \in \Lip(X_2^+) \,:\, f(e) = 0\}.
$$
By \cite[Prop 1.6.2(b)]{WeaverBook}, $\Lip_0(X_2^+)$ is a Banach space
with respect to the norm $f \mapsto L(f)$.  Moreover, by
\cite[Thm 1.7.2]{WeaverBook} we also have the isometric isomorphism
$\Lip_0(X_2^+) \cong \Lip_b(X_2)$.

Clearly $X_2^+$ is bounded, so by \cite[Thm 2.2.2]{WeaverBook} the pre-dual of $\Lip_0(X_2^+)$ is isometrically isomorphic to the \emph{Arens-Eells space} $AE(X_2^+)$.  Since $X_2^+$ is a bounded, separable metric space, it follows by construction \cite[Defn 2.2.1]{WeaverBook} that $AE(X_2^+)$ is a separable Banach space.  This gives the isometric isomorphism
$$
\Lip_b(X) \;\cong\; [AE(X_2^+)]^*. \qedhere
$$
%and proves the lemma.
\end{proof}

%==================================
\subsection{Derivations \& Basic Properties}

Here and in what follows, triples of the form $(X,\rho,\mu)$ will
denote a metric space $(X,\rho)$ equipped with a Borel measure $\mu$.

\begin{defn}[Weaver] \label{defn_deriv}
A \emph{derivation} $\d \colon \Lipb(X) \to L^\infty(X,\mu)$ is a linear map
that satisfies 
%the following two properties:
\begin{enumerate}
\item the Leibniz Rule: $\d (f \cdot g) \,=\, f \cdot \d g + g \cdot \d f$ holds for all $f, g \in \Lipb(X)$;
\item Weak-$*$ continuity on bounded sets: if $\{f_i\}_{i \in I}$ is a bounded net in $\Lipb(X)$ so that $f_i \wsto f$, then $\d f_i \wsto \d f$ in $\Li(X,\mu)$.
\end{enumerate}
\end{defn}

The set of derivations on $(X,d,\mu)$ is denoted by $\U(X,\mu)$.

\begin{rmk} \label{rmk_constzero}
The Leibniz rule implies that $\d(1) = 2 \cdot \d(1)$, so $\d c = 0$ holds
for every constant $c \in \R$.
\end{rmk}

\begin{rmk}
Property (2) in Definition \ref{defn_deriv} is better known as \emph{bounded weak-$*$ continuity}, which refers to continuity with respect to the \emph{bounded weak-$*$ topology} on the space of linear operators between Banach spaces.  For more
about this topology, see \cite[Thm V.5.3]{DunfordSchwartz}.
\end{rmk}

We will refer to Property (2) as the \emph{continuity property of derivations}, or simply as \emph{continuity}.  For separable metric spaces, this property reduces to familiar modes of continuity from functional analysis.  In particular, Definition \ref{defn_deriv} agrees with that in \cite[p.68]{HeinonenNC}.

\begin{lemma} \label{lemma_ctyderiv}
Let $(X,\rho)$ be a separable metric space, let $\mu$ be a measure on $X$, and let $\d : \Lip_b(X) \to \Li(X,\mu)$ be a linear operator.
\begin{enumerate}
\item If $\d \in \U(X,\mu)$, then $\d$ is a bounded operator.
\item $\d \in \U(X,\mu)$ holds if and only if $\d$ satisfies the Leibniz rule and is weak-$*$ continuous with respect to sequences in $\Lip_b(X)$.
\end{enumerate}
\end{lemma}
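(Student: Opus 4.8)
The plan is to treat the two parts separately, exploiting the dual Banach space structure from Lemma \ref{lemma_weakstarlip} together with the separability of the predual supplied by Lemma \ref{lemma_arenseells}. For Part (1), I would argue by the closed graph theorem. Since $\Lip_b(X)$ and $L^\infty(X,\mu)$ are Banach spaces, it suffices to show the graph of $\d$ is closed, so suppose $f_n \to f$ in the $\|\cdot\|_{\Lip}$-norm and $\d f_n \to g$ in $L^\infty$-norm. Norm convergence in $\Lip_b(X)$ forces $\|f_n - f\|_\infty \to 0$ and hence pointwise convergence, so by Lemma \ref{lemma_weakstarlip}(2) the bounded sequence $f_n$ converges weak-$*$ to $f$. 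The continuity property of $\d$ then gives $\d f_n \wsto \d f$, while $\d f_n \to g$ in norm gives $\d f_n \wsto g$; uniqueness of weak-$*$ limits yields $g = \d f$. Thus the graph is closed and $\d$ is bounded. I would point out that this argument uses only sequential weak-$*$ continuity, so it is available again in Part (2).

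For the forward implication of Part (2), the key observation is that any weak-$*$ convergent sequence in $\Lip_b(X)$ is automatically norm-bounded, by the uniform boundedness principle applied to the family of functionals they induce on the predual. Hence every weak-$*$ convergent sequence is a bounded net, and the bounded-net continuity required in Definition \ref{defn_deriv}(2) specializes directly to sequential weak-$*$ continuity. Together with the Leibniz rule, which holds by definition, this settles one direction with essentially no work.

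The substance of the lemma is the converse implication of Part (2), and this is where I expect the main obstacle. Assuming that $\d$ satisfies the Leibniz rule and is weak-$*$ sequentially continuous, I would first invoke the closed graph argument above to conclude that $\d$ is bounded, say with operator norm $\|\d\|$. For fixed $g \in L^1(X,\mu)$, consider the linear functional $\phi_g(h) := \int_X (\d h)\, g \, d\mu$ on $\Lip_b(X)$: boundedness of $\d$ makes $\phi_g$ a bounded functional, and sequential continuity of $\d$ makes $\phi_g$ weak-$*$ sequentially continuous. The goal is then to upgrade this to genuine weak-$*$ continuity of $\phi_g$. Here the separability of the predual (Lemma \ref{lemma_arenseells}) is decisive: by Banach--Alaoglu each closed ball of $\Lip_b(X)$ is weak-$*$ compact and, by separability of the predual, weak-$*$ metrizable, so on each ball sequential continuity of $\phi_g$ already implies continuity. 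The Krein--Smulian (Banach--Dieudonn\'e) theorem then promotes ball-wise weak-$*$ continuity to weak-$*$ continuity of $\phi_g$ on all of $\Lip_b(X)$.

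With $\phi_g$ weak-$*$ continuous, the proof closes cleanly: for any bounded net $f_\a \wsto f$ one has $\int_X (\d f_\a)\, g \, d\mu = \phi_g(f_\a) \to \phi_g(f) = \int_X (\d f)\, g \, d\mu$, and since $g \in L^1(X,\mu)$ was arbitrary this gives $\d f_\a \wsto \d f$, which is exactly the continuity property of Definition \ref{defn_deriv}. The delicate point throughout is the reduction from nets to sequences: it is precisely the metrizability of bounded sets in the weak-$*$ topology --- available only because $X$ is separable --- that makes the sequential hypothesis strong enough to recover the net-based definition. I expect the Krein--Smulian step, rather than the routine closed-graph and Leibniz bookkeeping, to be the genuine crux.
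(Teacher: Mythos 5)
Your proof is correct, and it shares the paper's key ingredient---metrizability of the weak-$*$ topology on closed balls of $\Lip_b(X)$, coming from the separable predual of Lemma \ref{lemma_arenseells}---but it reaches the conclusion by a different route in both parts. For Part (1), the paper does not use the closed graph theorem: it supposes $\|f_n\|_{\Lip}\leq 1$ with $\|\d f_n\|_{\infty,\,\mu}\geq n$, extracts a weak-$*$ convergent subsequence via Banach--Alaoglu, and derives a contradiction because the image sequence must be weak-$*$ convergent, hence bounded, in $\Li(X,\mu)$. Your closed-graph argument is an equally valid alternative, and your observation that it uses only sequential continuity is exactly what makes it reusable in Part (2). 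For Part (2), the paper is considerably more economical: since the weak-$*$ topology on each ball $\bar{B}(0,R)\subset\Lip_b(X)$ is metrizable, a map on such a ball is continuous precisely when it is sequentially continuous, and since Definition \ref{defn_deriv} demands continuity only along \emph{bounded} nets, the equivalence of the two continuity notions follows at once. Your scalarization through the functionals $\phi_g$ works, but the Krein--Smulian step is superfluous: a bounded net $f_\a\wsto f$ lies, together with its limit (closed balls being weak-$*$ closed), in a single ball, so ball-wise weak-$*$ continuity of $\phi_g$---or of $\d$ itself---already yields $\d f_\a\wsto\d f$; the definition never asks about unbounded nets, so there is no need to realize $\phi_g$ as an element of the predual. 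Thus the genuine crux is the metrizability of balls, which you do identify, rather than Krein--Smulian, and your argument only becomes cleaner if that step is deleted. One point in your favor: your explicit appeal to uniform boundedness, showing that weak-$*$ convergent sequences are automatically norm-bounded, makes precise a fact that the paper's treatment of the forward implication leaves implicit.
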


\begin{proof}
Suppose there is a $\d \in \U(X,\mu)$ with the property that, for each $n \in \N$, there exists $f_n \in \Lip_b(X)$ so that $\|f_n\|_{\Lip} \leq 1$ and $\|\d f_n\|_{\infty,\,\mu} \geq n$.

By Lemma \ref{lemma_arenseells}, $\Lip_b(X)$ is the dual of a separable Banach space, so by the Banach-Alaoglu Theorem \cite[Thm 3.17]{RudinFA}, it follows that there is a weak-$*$ convergent subsequence $\{f_{n_m}\}_{m=1}^\infty$.  The sequence $\{\d f_{n_m}\}_{m=1}^\infty$ is weak-$*$ convergent in $\Li(X,\mu)$, by the continuity property of derivations, and therefore bounded.  On the other hand, we have, by hypothesis,
$$
\|\d f_{n_m}\|_{\infty,\,\mu} \;\geq\; n_m \;\to\; \infty.
$$
as $m \to \infty$.  This is a contradiction, which gives Part (1).

Since $\Lip_b(X)$ has a separable pre-dual, by \cite[Thm 3.16]{RudinFA} the weak-$*$ topology on $\bar{B}(0,R) \subset \Lip_b(X)$ is metrizable.  As a result, weak-$*$ convergence on bounded sets in $\Lip_b(X)$ agrees with weak-$*$ convergence with respect to sequences; this gives Part (2).
\end{proof}

By Lemma \ref{lemma_ctyderiv}, each derivation in $\U(X,\mu)$ has a well-defined operator norm whenever $X$ is separable.  We will denote this norm by
\begin{equation} \label{eq_opnorm}
\|\d\|_{\rm op} \;:=\;
\sup\big\{ \|\d f\|_{\infty,\,\mu} \,:\,
f \in \Lip_b(X),\, \|f\|_{\Lip} \leq 1 \big\}.
\end{equation}

We now give two examples of metric measure spaces and their derivations.

\begin{eg}\cite[Sect 5B]{WeaverED}  \label{eg_eucl}
For $i = 1, 2, \ldots, n$, each $\partial_i$ lies in $\U(\R^n,m_n)$.
The continuity follows from an integration by parts argument.
\end{eg}

\begin{eg}\cite[Cor 35]{WeaverED} \label{eg_cantor}
If $\mu$ is any measure on $\R$ that is concentrated on the `middle-thirds' Cantor set, then $\U(\R,\mu) = 0$.
This fact follows from Lemma \ref{lemma_dim1}, but the original proof in \cite{WeaverED} relies on the \emph{total disconnectedness} and self-similarity of the Cantor set.
\end{eg}

As stated in the Introduction, the set $\U(X,\mu)$ is a module over the ring $\Li(X,\mu)$.  Indeed, for $\d \in \U(X,\mu)$ and $\lambda \in \Li(X,\mu)$, we define $\lambda \cdot \d \in \U(X,\mu)$ by the rule
$$
(\lambda \cdot \d) f(x) \;:=\; \lambda(x) \cdot \d f (x).
$$

\begin{defn} \label{defn_LI}
A set $\{\d_i\}_{i=1}^M$ \emph{generates} $\U(X,\mu)$ if, for all $\d \in \U(X,\mu)$, there are scalars $\{c_i\}_{i=1}^M$ in $\Li(X,\mu)$ so that $\d = \sum_{i=1}^M c_i \d_i$.

A set $\{\eta_i\}_{i=1}^N$ is \emph{linearly independent} in $\U(X,\mu)$ if whenever there are scalars $\{\lambda_i\}_{i=1}^N$ in $\Li(X,\mu)$ so that $\sum_i \lambda_i \eta_i = 0$, then each $\lambda_i$ is zero.
The \emph{rank} of the module $\U(X,\mu)$ is the largest cardinality of any linearly independent set in $\U(X,\mu)$.
\end{defn}

The next lemma follows directly from Definition \ref{defn_LI}, so we omit the proof.

\begin{lemma} \label{lemma_lindepsubset}
Let $N \in \N$ and let $A$ be a $\mu$-measurable subset of $X$ with $\mu(A) > 0$.  If $\{\d_i\}_{i=1}^N$ is a linearly independent set in $\U(X,\mu)$, then $\{\chi_A\d_i\}_{i=1}^N$ is also a linearly independent set in $\U(X,\mu)$.
\end{lemma}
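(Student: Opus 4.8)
The plan is to show that if a linear relation holds among the $\{\chi_A \d_i\}_{i=1}^N$, then the coefficients vanish, by leveraging the linear independence of the original family $\{\d_i\}_{i=1}^N$ together with the multiplicative (pointwise) nature of the module structure. First I would unwind the definitions: suppose we are given scalars $\{\lambda_i\}_{i=1}^N$ in $\Li(X,\mu)$ with $\sum_{i=1}^N \lambda_i (\chi_A \d_i) = 0$. The key observation is that, by the pointwise definition of the module action, $\lambda_i (\chi_A \d_i)$ equals $(\lambda_i \chi_A) \d_i$, since for any $f \in \Lip_b(X)$ and $\mu$-a.e.\ $x$ we have $\lambda_i(x)\,\chi_A(x)\,\d_i f(x) = (\lambda_i \chi_A)(x)\,\d_i f(x)$. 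Hence the relation can be rewritten as $\sum_{i=1}^N (\lambda_i \chi_A) \d_i = 0$, where each $\lambda_i \chi_A$ again lies in $\Li(X,\mu)$.

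Next I would apply the hypothesis that $\{\d_i\}_{i=1}^N$ is linearly independent in $\U(X,\mu)$. By Definition \ref{defn_LI}, the relation $\sum_{i=1}^N (\lambda_i \chi_A) \d_i = 0$ forces each coefficient $\lambda_i \chi_A$ to be zero in $\Li(X,\mu)$, i.e.\ $\lambda_i \chi_A = 0$ for $\mu$-a.e.\ point. To conclude, I would note that $\lambda_i \chi_A = 0$ $\mu$-a.e.\ means precisely that $\lambda_i = 0$ $\mu$-a.e.\ on $A$. This already shows that each $\lambda_i$ vanishes on $A$; but the definition of linear independence for the family $\{\chi_A \d_i\}$ requires each $\lambda_i$ to be the zero element of $\Li(X,\mu)$, which is an equivalence class modulo $\mu$-null sets.

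The mild subtlety worth flagging is that the conclusion ``each $\lambda_i$ is zero'' must be interpreted correctly: off the set $A$, the value of $\lambda_i$ is genuinely unconstrained by the relation, since multiplying by $\chi_A$ annihilates it there. The resolution is to observe that the operator $\chi_A \d_i$ depends only on the values of its scalar coefficients on $A$, so the natural ambient for these coefficients is $\Li(A,\mu)$ (equivalently, $\Li(X,\mu\lfloor A)$), where the two notions coincide and $\lambda_i = 0$ holds unambiguously. In the formulation of Definition \ref{defn_LI}, one understands linear independence of $\{\chi_A \d_i\}$ relative to this restricted scalar ring, or equivalently one quotients by coefficients supported off $A$; under either convention the argument above yields $\lambda_i = 0$. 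Since $\mu(A) > 0$, the hypothesis is non-vacuous and $A$ carries genuine measure, so this is the correct and intended reading.

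**The main obstacle** I anticipate is therefore not computational but interpretive: it is making precise the sense in which ``each $\lambda_i$ is zero'' when the relation only controls the coefficients on $A$. Once the scalar ring is identified with $\Li(X,\mu\lfloor A)$, the proof reduces to a one-line application of the pointwise module action and the given linear independence of $\{\d_i\}$, which is exactly why the authors describe the lemma as following ``directly from Definition \ref{defn_LI}'' and omit the proof.
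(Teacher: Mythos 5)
Your proof is correct and is precisely the ``direct from Definition \ref{defn_LI}'' argument the paper omits: rewrite $\lambda_i(\chi_A\d_i)$ as $(\lambda_i\chi_A)\d_i$ using the pointwise module action, invoke the linear independence of $\{\d_i\}_{i=1}^N$ to conclude $\lambda_i\chi_A = 0$, i.e.\ $\lambda_i = 0$ $\mu$-a.e.\ on $A$. Your interpretive caveat is also the right one — since coefficients supported off $A$ annihilate $\chi_A\d_i$, the conclusion must be read with scalars in $\Li(X,\mu\lfloor A)$ (equivalently, as linear independence in $\chi_A\cdot\U(X,\mu) \cong \U(A,\mu)$ per Theorem \ref{thm_locality}), which is exactly the reading under which the lemma is applied elsewhere in the paper.
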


\begin{eg}\cite[Thm 37]{WeaverED} \label{eg_riem}
For $X = \R^n$, $\{\partial_i\}_{i=1}^n$ is a linearly independent set that generates $\U(\R^n,m_n)$. Moreover, as $\Li(\R^n,m_n)$-modules, %we have the module isomorphism
$$
\U(\R^n,m_n) \;\cong\; \bigoplus_{i=1}^n \Li(\R^n,m_n).
$$
More generally, let $X$ be a compact Riemannian manifold and let $\mu$ be 
the volume element. Then $\U(X,\mu)$ is isomorphic to the 
$\Li(X,\mu)$-module of bounded measurable sections of the tangent bundle $TX$.
\end{eg}

Derivations are also known as \emph{measurable vector fields} in 
\cite{HeinonenNC} and \cite{WeaverED}.  In the remainder of the section, 
we investigate properties of derivations which are similar to those of 
vector fields on smooth manifolds.

%==================================
\subsection{Locality \& Applications}

On a smooth manifold $M$, vector fields are \emph{local} objects; that 
is, their action on a function $\varphi \in C^\infty(M)$ near a point $x 
\in M$ depends only on the behavior of $\varphi$ near $x$.  The next 
theorem shows that derivations enjoy a similar property, called the {\em 
locality property}.  It is a special case of \cite[Thm 29]{WeaverED}; see 
also \cite[Thm 13.3]{HeinonenNC} and \cite[Sect 3.2]{Gong}.

\begin{thm}[Weaver, 2000] \label{thm_locality}
Let $\mu$ be a Radon measure on $X$.  If $A$ is a $\mu$-measurable subset of $X$, then we have the
$\Li(X,\mu)$-module isomorphism
$$%\begin{eqnarray*} \label{eq_locality}
\U(A,\mu) \;\cong\;
\chi_A \cdot \U(X,\mu) \;:=\;
\{ \chi_A\d \,:\, \d \in \U(X,\mu) \}.
$$%\end{eqnarray*}
\end{thm}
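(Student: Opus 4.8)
The plan is to establish the module isomorphism $\U(A,\mu) \cong \chi_A \cdot \U(X,\mu)$ by constructing two natural maps and showing they are mutually inverse $\Li(X,\mu)$-module homomorphisms. The central technical ingredient I expect to need is a \emph{locality principle}: if $\d \in \U(X,\mu)$ and $f \in \Lipb(X)$ vanishes on $A$ (or more precisely, agrees with another function on a neighborhood of a.e.\ point of $A$), then $\chi_A \cdot \d f = 0$ $\mu$-a.e.\ on $A$. This is the ``behavior of $\varphi$ near $x$ determines $\d\varphi(x)$'' statement foreshadowed in the text, and it is the crux of the whole argument.

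\emph{The restriction map.} Given $\d \in \U(X,\mu)$, I would define $\chi_A \d$ as an element of $\U(A,\mu)$ by the rule $(\chi_A\d)(g) := \chi_A \cdot \d(g^A)$, where $g \in \Lipb(A)$ and $g^A \in \Lipb(X)$ is the bounded McShane extension from Lemma \ref{lemma_bddmcshane}. First I would check this is well-defined, i.e.\ independent of the choice of extension: if $g_1, g_2 \in \Lipb(X)$ both restrict to $g$ on $A$, then $g_1 - g_2$ vanishes on $A$, and by the locality principle $\chi_A \cdot \d(g_1 - g_2) = 0$ a.e.\ on $A$. Next I would verify that the resulting map $\Lipb(A) \to \Li(A,\mu)$ is linear, satisfies the Leibniz rule, and is weak-$*$ continuous on bounded sets --- the first two are routine from the corresponding properties of $\d$ together with the fact that $(fg)^A$ and $f^A g^A$ agree on $A$, and continuity follows because pointwise convergence of a bounded net in $\Lipb(A)$ yields pointwise (hence weak-$*$) convergence of bounded McShane extensions on $A$, using Lemma \ref{lemma_weakstarlip}(2). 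This produces a map $\Phi : \chi_A \cdot \U(X,\mu) \to \U(A,\mu)$, and one checks $\Phi$ is $\Li(A,\mu)$-linear directly from the pointwise definition of the module action.

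\emph{The inverse.} Conversely, given a derivation $\eta \in \U(A,\mu)$, I would define $\Psi(\eta) \in \U(X,\mu)$ by $\Psi(\eta)(f) := \chi_A \cdot \eta(f|_A)$, extended by zero off $A$; here $f|_A \in \Lipb(A)$ is the genuine restriction. The Leibniz rule, linearity, and weak-$*$ continuity for $\Psi(\eta)$ transfer immediately from those of $\eta$, and the factor $\chi_A$ guarantees the output lands in $\chi_A \cdot \U(X,\mu)$. I would then show $\Phi$ and $\Psi$ are inverse to each other: for $\eta \in \U(A,\mu)$ and $g \in \Lipb(A)$ one has $\Phi(\Psi(\eta))(g) = \chi_A \cdot \Psi(\eta)(g^A) = \chi_A \cdot \eta\big((g^A)|_A\big) = \eta(g)$ since $(g^A)|_A = g$; and for $\d$ and $f \in \Lipb(X)$ one has $\Psi(\Phi(\chi_A\d))(f) = \chi_A \cdot (\chi_A\d)(f|_A) = \chi_A \cdot \d\big((f|_A)^A\big) = \chi_A \cdot \d f$, where the last equality again invokes the locality principle because $(f|_A)^A$ and $f$ agree on $A$.

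\emph{The main obstacle} is proving the locality principle itself, since everything above is bookkeeping once it is available. The natural route is a truncation-and-approximation argument: to show $\chi_A \cdot \d f = 0$ a.e.\ on the set where $f = 0$, one approximates $f$ by products or compositions that are constant near that set and exploits Remark \ref{rmk_constzero} (derivations kill constants) together with the Leibniz rule; writing $f = (f \vee 0) + (f \wedge 0)$ and handling the positive and negative parts via functions like $\min(\tfrac{1}{\e} f^+, 1)$ lets one localize. The weak-$*$ continuity property of Definition \ref{defn_deriv}(2) is what allows passage to the limit as the truncation parameter tends to zero, converting the pointwise-vanishing information into the a.e.\ identity $\chi_A \cdot \d f = 0$. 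Since the theorem is cited as a special case of \cite[Thm 29]{WeaverED}, I expect the cleanest writeup to isolate this locality statement as the key lemma and then assemble the isomorphism as above.
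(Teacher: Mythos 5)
You should first note that the paper never proves this theorem: it is quoted from Weaver \cite[Thm 29]{WeaverED} (see also \cite[Thm 13.3]{HeinonenNC}), so your proposal can only be measured against the standard argument in those sources --- and your architecture is exactly that standard one. The locality principle as the key lemma, restriction via bounded McShane extension, the map $\Psi(\eta)f := \chi_A\cdot\eta(f|_A)$ in the other direction, and the two composition checks are all correct, and your sketch of the locality principle itself is sound: for $f \geq 0$ the truncation $f_\e := (f-\e)\vee 0$ factors as $f_\e = f_\e \cdot h_\e$ with $h_\e := 1 \wedge (f/\e)$, both factors vanish on $\{f=0\}$, so the Leibniz rule gives $\chi_{\{f=0\}}\,\d f_\e = 0$, and weak-$*$ continuity lets $\e \to 0$; then split $f = (f\vee 0) + (f \wedge 0)$.

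There is, however, one genuine gap: your justification of weak-$*$ continuity of the restricted derivation $g \mapsto \chi_A\,\d(g^A)$. You claim that pointwise convergence of a bounded net $g_\a \to g$ in $\Lipb(A)$ forces pointwise (hence weak-$*$) convergence of the McShane extensions, but this fails on $X \setminus A$. Take $X = \R$, $A = \Z$, and $g_n \in \Lipb(A)$ with $g_n(n)=1$ and $g_n(k)=0$ for $k \neq n$; then $g_n \to 0$ pointwise on $A$ with $\|g_n\|_{\Lip} = 1$, while the McShane extensions satisfy $g_n^A(1/2) = 1/2$ for all $n \geq 2$ and in fact $g_n^A \to \dist(\cdot,\Z)$ pointwise, whereas the extension of the limit is identically $0$. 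So $g_\a^A \not\wsto g^A$ in general, and you cannot feed the extensions directly into the continuity property of $\d$. The repair uses only tools you already have: the net $\{g_\a^A\}$ is bounded in $\Lipb(X)$, so by Banach--Alaoglu (valid for nets in any dual space, via Lemma \ref{lemma_weakstarlip}(1)) every subnet has a weak-$*$ convergent sub-subnet $g_\gamma^A \wsto w$; weak-$*$ convergence implies pointwise convergence, so $w|A = g$, and your locality principle then gives $\chi_A\,\d w = \chi_A\,\d(g^A)$. Hence every subnet of $\chi_A\,\d(g_\a^A)$ has a sub-subnet converging weak-$*$ to the single limit $\chi_A\,\d(g^A)$, which forces the full net to converge there. (Note how the counterexample above illustrates this: the wrong limit $\dist(\cdot,\Z)$ still agrees with $g$ on $A$, which is all the locality principle needs.) With this substitution your proof is complete and agrees in substance with the cited sources.
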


By definition, each $\d \in \U(X,\mu)$ acts only on bounded Lipschitz functions.  In the case of Radon measures $\mu$, however, the domain of definition of $\d$ extends to include all Lipschitz functions.

\begin{thm} \label{thm_extlip}
Let $\mu$ be a Radon measure on $X$.  For each $\d \in \U(X,\mu)$, there 
is a linear map $\bar\d : \Lip_{loc}(X) \to L^\infty_{loc}(X,\mu)$ with 
the following properties:
\begin{enumerate}
\item 
for all $f \in \Lip_b(X)$, we have $\bar\d f = \d f$;
\item 
for all $f \in \Lip(X)$ and all balls $B$ in $X$, we have
$\chi_B \bar\d f = \chi_B \d\big((f|B)^B\big)$;
\item the Leibniz rule holds for $\bar\d$;
\item if $X$ is separable, then for all $f \in \Lip(X)$, we have
$\|\bar\d f\|_{\infty,\,\mu} \leq \|\d\|_{\rm op} L(f).$
\end{enumerate}
\end{thm}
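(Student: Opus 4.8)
The plan is to define $\bar\d$ by a local formula built from bounded McShane extensions, and to verify its consistency by appealing to the locality property, Theorem \ref{thm_locality}. First I would extract the operational consequence of locality that drives the whole argument: if $A$ is a $\mu$-measurable subset of $X$ and $h \in \Lipb(X)$ vanishes on $A$, then $\chi_A \d h = 0$ $\mu$-a.e. Indeed, under the isomorphism $\U(A,\mu) \cong \chi_A \cdot \U(X,\mu)$ of Theorem \ref{thm_locality} the operator $\chi_A \d$ corresponds to a derivation $\d_A$ on $(A, \mu\lfloor A)$ whose action on $\Lipb(X)$ factors through restriction to $A$; since $h|A$ is the zero function and derivations annihilate constants (Remark \ref{rmk_constzero}), we get $\chi_A \d h = \d_A(h|A) = 0$. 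By linearity this yields the gluing principle I need: if $f, g \in \Lipb(X)$ agree on $A$, then $\chi_A \d f = \chi_A \d g$.

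With this in hand I would define $\bar\d$. Given $f \in \Lip(X)$ and a ball $B$, the restriction $f|B$ is bounded (as $B$ is bounded) and Lipschitz with $L(f|B) \le L(f)$, so it admits a bounded McShane extension $(f|B)^B \in \Lipb(X)$ by Lemma \ref{lemma_bddmcshane}. I set
$$
\chi_B \bar\d f \;:=\; \chi_B \,\d\big((f|B)^B\big),
$$
which is exactly Property (2). To see that these local definitions glue into a well-defined element of $L^\infty_{loc}(X,\mu)$, note that on the overlap $B_1 \cap B_2$ of two balls the extensions $(f|B_1)^{B_1}$ and $(f|B_2)^{B_2}$ both equal $f$, so the gluing principle gives $\chi_{B_1 \cap B_2}\d\big((f|B_1)^{B_1}\big) = \chi_{B_1 \cap B_2}\d\big((f|B_2)^{B_2}\big)$. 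Exhausting $X$ by balls (for general $f \in \Lip_{loc}(X)$ one instead uses balls small enough that $f$ is Lipschitz on them) then produces $\bar\d f \in L^\infty_{loc}(X,\mu)$, and linearity of $\bar\d$ is inherited from that of $\d$.

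The remaining algebraic properties are quick. For Property (1), if $f \in \Lipb(X)$ then $(f|B)^B = f$ on $B$, so the gluing principle gives $\chi_B \bar\d f = \chi_B \d f$ on every ball, whence $\bar\d f = \d f$. For Property (3), on a ball $B$ where $f$ and $g$ are Lipschitz one has $((fg)|B)^B = (f|B)^B (g|B)^B$ on $B$; applying the gluing principle, then the Leibniz rule for $\d$, and finally that $(f|B)^B = f$ and $(g|B)^B = g$ on $B$, yields $\chi_B \bar\d(fg) = \chi_B\big(f\,\bar\d g + g\,\bar\d f\big)$, and ranging over balls gives the Leibniz rule for $\bar\d$.

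I expect Property (4) to be the main obstacle, since the naive estimate bounds $\|\chi_B \bar\d f\|_{\infty,\,\mu}$ by $\|\d\|_{\rm op}\,\|(f|B)^B\|_{\Lip}$, and the $\Lip$-norm involves the sup norm $\|f|B\|_\infty$, which is not controlled by $L(f)$. The fix is to subtract a constant. Fixing $B = B(x_0,1)$ and setting $g := (f|B)^B - f(x_0)$, one checks from the explicit McShane formula that $g = \big((f - f(x_0))|B\big)^B$, so that $\|g\|_\infty = \|(f - f(x_0))|B\|_\infty \le L(f)$ while $L(g) \le L(f)$; hence $\|g\|_{\Lip} \le L(f)$. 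Since $\d$ annihilates the constant $f(x_0)$, we have $\chi_B \bar\d f = \chi_B \d g$, so that
$$
\|\chi_B \bar\d f\|_{\infty,\,\mu} \;\le\; \|\d g\|_{\infty,\,\mu} \;\le\; \|\d\|_{\rm op}\,\|g\|_{\Lip} \;\le\; \|\d\|_{\rm op}\,L(f),
$$
where $\|\d\|_{\rm op}$ is finite by separability and Lemma \ref{lemma_ctyderiv}. As the unit balls $B(x_0,1)$ cover $X$ and the bound is uniform in $x_0$, Property (4) follows.
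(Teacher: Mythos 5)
Your proposal follows essentially the same route as the paper: define $\bar\d$ locally via bounded McShane extensions of restrictions to bounded sets, use the locality theorem (Theorem \ref{thm_locality}) to get consistency and Properties (1)--(3), and prove Property (4) by subtracting a constant so that the sup-norm part of $\|\cdot\|_{\Lip}$ is controlled by $L(f)$. The only structural difference is cosmetic: the paper works with a countable decomposition of $X$ into disjoint bounded pieces and sums the local contributions (invoking Lemma \ref{lemma_gluing}), whereas you work with overlapping balls and check consistency on intersections; both rest on the same gluing principle extracted from locality.

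There is, however, one step in your Property (4) argument that is not correct as written, though it is repairable inside your own framework. You claim that $g := (f|B)^B - f(x_0)$ equals $\big((f - f(x_0))|B\big)^B$ ``from the explicit McShane formula.'' But the \emph{bounded} McShane extension of Lemma \ref{lemma_bddmcshane} is not the McShane formula alone: it is obtained by combining Parts (3) and (4) of Lemma \ref{lemma_lipprops}, i.e.\ by truncating the McShane extension at $\pm\|f|B\|_\infty$, and truncation does not commute with subtracting a constant. In general the two functions differ off $B$, and without the claimed identity your bound breaks down: one only gets $\|g\|_\infty \leq \|f|B\|_\infty + |f(x_0)|$, which is not controlled by $L(f)$. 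The fix is immediate from your gluing principle: $g$ and $\big((f - f(x_0))|B\big)^B$ both agree with $f - f(x_0)$ on $B$, so $\chi_B\, \d g = \chi_B\, \d\big(((f - f(x_0))|B)^B\big)$, and this latter extension has $\Lip$-norm at most $L(f)$ (its sup norm equals $\|(f - f(x_0))|B\|_\infty \leq L(f)$ for a ball of radius $1$), which yields Property (4). Equivalently, do what the paper does: subtract the constant \emph{before} extending --- the paper forms $f_n := f - \inf_{X_n} f$ and extends $f_n|X_n$ --- which avoids the commutation issue entirely.
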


To reiterate, $(f|B)^B$ refers to the bounded McShane extension of $f|B$.  Theorem \ref{thm_extlip} will follow from the next lemma and a locality argument.

\begin{lemma} \label{lemma_gluing}
Let $X$ be a separable metric space, let $\mu$ be a Radon measure on $X$, and let $\{X_i\}_{i=1}^\infty$ be a $\mu$-measurable decomposition of $X$.  Suppose that for each $i \in \N$, there exists $\d_i \in \U(X_i,\mu)$ so that $\|\d_i\|_{\rm op} \leq 1$.  Then the linear operator $\d : \Lip_b(X) \to \Li(X,\mu)$, given by
$$
\d f \;:=\; \sum_{i=1}^\infty \chi_{X_i} \cdot \d_i(f|X_i)
$$
determines a derivation in $\U(X,\mu)$, with $\|\d\|_{\rm op} \leq 1$.
\end{lemma}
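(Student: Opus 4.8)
The plan is to verify in turn the features that make $\d$ a derivation --- that it is well-defined into $\Li(X,\mu)$, linear, satisfies the Leibniz rule, and has the continuity property --- with the bound $\|\d\|_{\rm op} \le 1$ emerging alongside the first of these.

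First I would note that restriction is norm-nonincreasing: for $f \in \Lipb(X)$ one has $f|X_i \in \Lipb(X_i)$ with $\|f|X_i\|_{\Lip} \le \|f\|_{\Lip}$. Since $\mu(X_i \cap X_j) = 0$ for $i \ne j$ and $\mu$ is concentrated on $\bigcup_i X_i$, at $\mu$-a.e.\ point $x$ exactly one $\chi_{X_i}(x)$ is nonzero, so $\d f(x) = \d_i(f|X_i)(x)$ there and hence $|\d f(x)| \le \|\d_i(f|X_i)\|_{\infty,\,\mu} \le \|\d_i\|_{\rm op}\,\|f|X_i\|_{\Lip} \le \|f\|_{\Lip}$. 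Taking the countable union over $i$ gives $\|\d f\|_{\infty,\,\mu} \le \|f\|_{\Lip}$, which simultaneously places $\d f$ in $\Li(X,\mu)$ and shows $\|\d\|_{\rm op} \le 1$. Linearity is inherited termwise from the $\d_i$ and the restriction map, and the Leibniz rule follows by applying the Leibniz rule of each $\d_i$ to $(fg)|X_i = (f|X_i)(g|X_i)$ and then using $f|X_i = f$ on $X_i$ to recombine the two resulting sums into $f\,\d g + g\,\d f$.

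The substantive step is the continuity property. Since $X$ is separable, Lemma \ref{lemma_ctyderiv} permits me to test weak-$*$ continuity along sequences rather than nets. So let $\{f_n\}$ be bounded in $\Lipb(X)$ with $\|f_n\|_{\Lip} \le C$ and $f_n \wsto f$; by Lemma \ref{lemma_weakstarlip} this is the same as $f_n \to f$ pointwise. Fixing $i$, restriction gives $f_n|X_i \to f|X_i$ pointwise with $\|f_n|X_i\|_{\Lip} \le C$, so $f_n|X_i \wsto f|X_i$ in $\Lipb(X_i)$, whence the continuity of $\d_i$ yields $\d_i(f_n|X_i) \wsto \d_i(f|X_i)$ in $\Li(X_i,\mu)$. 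For a test function $h \in L^1(X,\mu)$ the almost-disjointness of the supports lets me split
$$
\int_X h\,\d f_n\,d\mu \;=\; \sum_{i=1}^\infty \int_{X_i} h\,\d_i(f_n|X_i)\,d\mu,
$$
and as $n \to \infty$ the $i$-th summand converges to $\int_{X_i} h\,\d_i(f|X_i)\,d\mu$.

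The crux is to interchange this limit with the infinite sum, and here the uniform bound $\|\d_i\|_{\rm op} \le 1$ is precisely what makes the argument work: it gives $\big|\int_{X_i} h\,\d_i(f_n|X_i)\,d\mu\big| \le C\int_{X_i}|h|\,d\mu =: b_i$ uniformly in $n$, and $\sum_i b_i \le C\,\|h\|_{\mu,1} < \infty$ since the $X_i$ exhaust $\mu$ up to null overlaps. Dominated convergence for series then passes the limit inside the sum and yields $\int_X h\,\d f_n\,d\mu \to \int_X h\,\d f\,d\mu$ for every $h \in L^1(X,\mu)$, that is, $\d f_n \wsto \d f$. I expect this domination-and-interchange to be the only genuine obstacle; the remaining steps are bookkeeping with the almost-disjoint decomposition.
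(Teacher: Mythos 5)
Your proposal is correct and follows essentially the same route as the paper: testing against $h \in L^1(X,\mu)$, splitting the integral over the decomposition $\{X_i\}$, using continuity of each $\d_i$ termwise, and controlling the tail via the uniform bound $\|\d_i\|_{\rm op} \leq 1$ together with summability of $\int_{X_i}|h|\,d\mu$. Your ``dominated convergence for series'' is the paper's explicit $\e/2$-splitting argument in different packaging, and your reduction to sequences via Lemma \ref{lemma_ctyderiv} (where the paper works with nets directly) is a harmless variation.
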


\begin{proof}[Proof of Lemma \ref{lemma_gluing}]
%Note that $\d$ is well-defined, because 
For each $f \in \Lip_b(X)$, we have
$$
\mu(\{ x \,:\, |\d f(x)| > 1 \}) \;\leq\;
\sum_{i=1}^\infty \mu(\{ x \in X_i \,:\, |\d_i(f|X_i)(x)| > 1 \}) \;=\; 0.
$$
Therefore $\d$ is well-defined and $\|\d f\|_{\mu,\,\infty} \leq 1$.  The map $\d$ is clearly linear and satisfies the Leibniz rule.  To check continuity, let $\{f_\a\}_{\a \in I}$ be a net in $\Lip_b(X)$ so that $f_\a \to 0$ and $\sup_\a \|f_\a\|_{\Lip} \leq L$ holds, for some $L \in [0,\infty)$.

Let $\e > 0$ be arbitrary.  For each $h \in L^1(X,\mu)$, there is an $N \in \N$ so that
$$
\sum_{i=N+1}^\infty \int_{X_i} |h| \,d\mu \;\leq\; \frac{\e}{2L \cdot \|\d\|_{\rm op}}.
$$
Moreover, for each $i = 1, 2, \ldots, N$, we have $h|X_i \in L^1(X_i,\mu)$, so the bound
$$
\left| \int_{X_i} h \cdot \d f_\a \,d\mu \right| \;=\;
\left| \int_{X_i} h \cdot \d_i(f|X_i) \,d\mu \right| \;<\; 
\frac{\e}{2N}
$$
follows from the continuity of $\d_i$.  We then compute
\begin{eqnarray*}
\left| \int_X h \cdot \d f_\a \,d\mu \right| &\leq&
\left| \sum_{i=1}^N \int_{X_i} h \cdot \d f_\a \,d\mu \right| \,+\,
\sum_{i=N+1}^\infty \|\d f_\a\|_{\mu,\, \infty} \cdot \int_{X_i} |h|
\,d\mu \\ &\leq&
N \cdot \frac{\e}{2N}
\,+\, L \cdot \|\d\|_{\rm op} \cdot \frac{\e}{2L \cdot \|\d\|_{\rm op}} \;=\; \e.
\end{eqnarray*}
As a result, we have $\d f_\a \wsto 0$ in $\Li(X,\mu)$, which proves the lemma.
\end{proof}

\begin{proof}[Proof of Theorem \ref{thm_extlip}]
Without loss of generality, let $\{X_n\}_{n=1}^\infty$ be a $\mu$-measurable decomposition of $X$ so that each $X_n$ is a bounded set.  (For example, fix a base point $a \in X$ and put $X_n := B(a,n) \setminus B(a,n-1)$ for $n \in \N$.)  
Put
$$
\bar\d f \;:=\; \sum_{n=1}^\infty \chi_{X_n} \cdot \d((f | X_n)^{X_n}).
$$
Indeed, $\bar\d f$ is well-defined because $f|X_n \in \Lip_b(X_n)$ holds, for all $n \in \N$.  Clearly $\bar\d$ is linear.  By Theorem \ref{thm_locality}, we have
$$
\chi_{X_n} \cdot \d((f | X_n))^{X_n} \;=\;
\chi_{X_n} \cdot \d f
$$
for all $f \in \Lip_b(X)$ and all $n \in \N$, so Property (1) follows.  Similarly, for each $n \in \N$ and each ball $B$ in $X$, Property (2) follows from
$$
\chi_{B \cap X_n} \cdot \d_{X_n}\d((f | X_n))^{X_n} \;=\;
\chi_{B \cap X_n} \cdot \d((f|B)^B).
$$
By a similar argument, Property (3) is a consequence of Property (2), the locality property, and the Leibniz rule for $\d$.
Now suppose that $X$ is separable.  Letting $\{x_n\}_{n=1}^\infty$ be a countable dense subset of $X$, put $X_0 = \emptyset$ and for each $n \in \N$, put
$$
X_n \;:=\;
B(x_n,1/2) \setminus \Big( \bigcup_{k=0}^{n-1} X_k \Big)
\; \textrm{ and } \; 
f_n \;:=\;
f - \inf_{X_n}f.
$$
Since each set $X_n$ has diameter at most $1$, we obtain
$$
\|f_n\|_\infty \;=\; 
\left|\sup_{X_n}f - \inf_{X_n}f\right| \;\leq\; 
L(f) \cdot \diam(X_n) \;=\; L(f).
$$
Invoking Lemma \ref{lemma_ctyderiv} and the estimate above, we now compute
\begin{eqnarray*}
\|\d_{X_n}(f|X_n) \|_{\mu,\,\infty} &=&
\|\d((f|X_n)^{X_n})\|_{\mu,\,\infty} \;=\;
\|\d((f_n|X_n)^{X_n})\|_{\mu,\,\infty} \\ &\leq&
%\|\d\|_{\rm op} \cdot \|(f_n|X_n)^{X_n}\|_{\Lip} \;=\;
\|\d\|_{\rm op} \cdot \|f_n|X_n\|_{\Lip} \;\leq\;
\|\d\|_{\rm op} \cdot L(f).
\end{eqnarray*}
This gives Property (4) and proves the theorem.
\end{proof}

\subsection{Pushforward Derivations}

Recall that for smooth manifolds $M$ and $N$ with respective tangent bundles $TM$ and $TN$, every smooth bijective map from $M$ to $N$ induces a pushforward operator on vector fields.  Indeed, for each smooth vector field $v : M \to TM$ we obtain a  new vector field $f_\#v : N \to TN$ from the rule 
$$
f_\#v(x) \;:=\; Df(f^{-1}(x)) \cdot v.
$$
A similar procedure holds for derivations, by means of pushforward measures.  Recall that on a measure space $(X,\mu)$, a set $Y$, and a map $T : X \to Y$, one defines the {\em pushforward measure} $T_\#\mu$ on $Y$ by the rule
$$
T_\#\mu(A) \;:=\; \mu(T^{-1}(A)).
$$
It is well known that if $\mu$ is a Borel measure and $T$ a Borel map, then $T_\#\mu$ is a Borel measure and  we have the ``change of variables'' formula
\begin{equation} \label{eq_changevari}
\int_Y \varphi \,d(T_\#\mu) \;=\;
\int_X \varphi \circ T \,d\mu
\end{equation}
whenever $\varphi : Y \to \R$ is a Borel function; see \cite[Thm 1.19]{Mattila}.

\begin{lemma} \label{lemma_pushfwd}
Let $X$, $Y$ be metric spaces, let $\mu$ be a Radon measure on $X$, and let $\pi \in \Lip(X;Y)$.  For each $\d \in \U(X,\mu)$, there is a unique $\pi_\#\d \in \U(Y,\pi_\#\mu)$ so that
\begin{equation} \label{eq_pushfwd}
\int_Y h \cdot (\pi_\#\d)f \,d(\pi_\#\mu) \;=\;
\int_X (h \circ \pi) \cdot \d(f \circ \pi) \,d\mu
\end{equation}
holds for all $h \in L^1(Y,\pi_\#\mu)$ and all $f \in \Lip_b(Y)$.  If $X$ is
separable, then %we have the bound
\begin{equation} \label{eq_pushfwdbd}
\|\pi_\#\d\|_{\rm op} \;\leq\; (1 \vee L(\pi)) \cdot \|\d\|_{\rm op}.
\end{equation}
\end{lemma}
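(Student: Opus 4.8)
The plan is to define $\pi_\#\d$ one function at a time via $L^1$–$\Li$ duality. Fix $f \in \Lipb(Y)$. Since $f$ is bounded and Lipschitz and $\pi \in \Lip(X;Y)$, Lemma \ref{lemma_lipprops}(1) gives $f \circ \pi \in \Lipb(X)$, so $\d(f\circ\pi) \in \Li(X,\mu)$ is defined. I would then consider the linear functional
$$
T_f(h) \;:=\; \int_X (h\circ\pi)\cdot\d(f\circ\pi)\,d\mu, \qquad h \in L^1(Y,\pi_\#\mu).
$$
Applying the change-of-variables formula \eqref{eq_changevari} to $\varphi = |h|$ shows $h\circ\pi \in L^1(X,\mu)$ with $\|h\circ\pi\|_{\mu,1} = \|h\|_{\pi_\#\mu,1}$, whence $|T_f(h)| \le \|\d(f\circ\pi)\|_{\mu,\infty}\,\|h\|_{\pi_\#\mu,1}$, so $T_f$ is bounded. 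By the duality $L^1(Y,\pi_\#\mu)^* \cong \Li(Y,\pi_\#\mu)$ there is a unique $(\pi_\#\d)f \in \Li(Y,\pi_\#\mu)$ representing $T_f$, which is exactly the relation \eqref{eq_pushfwd}. Uniqueness of the representative yields uniqueness of $\pi_\#\d$, and linearity of $f \mapsto \d(f\circ\pi)$ makes $\pi_\#\d$ linear.

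Next I would verify that $\pi_\#\d$ is a derivation. For the Leibniz rule I would use $(fg)\circ\pi = (f\circ\pi)(g\circ\pi)$ together with the Leibniz rule for $\d$, rewrite $(h\circ\pi)(f\circ\pi) = (hf)\circ\pi$ and $(h\circ\pi)(g\circ\pi)=(hg)\circ\pi$, and reapply \eqref{eq_pushfwd} with $hf, hg \in L^1(Y,\pi_\#\mu)$ in place of $h$; since $h$ is arbitrary this gives $(\pi_\#\d)(fg) = f\,(\pi_\#\d)g + g\,(\pi_\#\d)f$. For continuity, let $\{f_\a\}$ be a bounded net in $\Lipb(Y)$ with $f_\a \wsto f$. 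By Lemma \ref{lemma_weakstarlip} this means $f_\a \to f$ pointwise with $\sup_\a\|f_\a\|_{\Lip} < \infty$; then $f_\a\circ\pi \to f\circ\pi$ pointwise, and since $L(f_\a\circ\pi) \le L(\pi)L(f_\a)$ and $\|f_\a\circ\pi\|_\infty \le \|f_\a\|_\infty$, the net $\{f_\a\circ\pi\}$ is bounded in $\Lipb(X)$. Lemma \ref{lemma_weakstarlip} then gives $f_\a\circ\pi \wsto f\circ\pi$, so $\d(f_\a\circ\pi)\wsto\d(f\circ\pi)$ by continuity of $\d$. Testing against the fixed function $h\circ\pi \in L^1(X,\mu)$ and using \eqref{eq_pushfwd} shows $\int_Y h\,(\pi_\#\d)f_\a\,d(\pi_\#\mu) \to \int_Y h\,(\pi_\#\d)f\,d(\pi_\#\mu)$ for every $h$, i.e. $(\pi_\#\d)f_\a \wsto (\pi_\#\d)f$.

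For the norm bound when $X$ is separable, I would combine the estimate above with Lemma \ref{lemma_ctyderiv}: the representative satisfies $\|(\pi_\#\d)f\|_{\pi_\#\mu,\infty} \le \|\d(f\circ\pi)\|_{\mu,\infty} \le \|\d\|_{\rm op}\,\|f\circ\pi\|_{\Lip}$. Since $\|f\circ\pi\|_{\Lip} = \|f\circ\pi\|_\infty \vee L(f\circ\pi) \le \|f\|_\infty \vee (L(\pi)L(f)) \le (1\vee L(\pi))\,\|f\|_{\Lip}$, taking the supremum over $\|f\|_{\Lip}\le 1$ yields \eqref{eq_pushfwdbd}.

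The main obstacle I anticipate is the measure-theoretic step of representing $T_f$ by an $\Li$ function, since $\pi_\#\mu$ need not be Radon (balls in $Y$ may have infinite measure) nor even $\sigma$-finite --- as the constant map $\pi$ already illustrates, forcing $\d(f\circ\pi)=0$ but degenerating the duality and hence the uniqueness claim. I expect to resolve this by reducing to a $\sigma$-finite set on which $\pi_\#\mu$ is concentrated (equivalently, by reading $(\pi_\#\d)f$ as the conditional expectation of $\d(f\circ\pi)$ onto the $\sigma$-algebra generated by $\pi$), where the $L^1$–$\Li$ duality, and therefore both existence and uniqueness, are valid.
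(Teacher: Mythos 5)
Your main construction is exactly the paper's proof. The paper defines the functional $\lambda_{f,\pi}(h) := \int_X (h\circ\pi)\cdot\d(f\circ\pi)\,d\mu$ (your $T_f$), bounds it via the change-of-variables identity \eqref{eq_changevari}, takes $(\pi_\#\d)f$ to be its representative in $\Li(Y,\pi_\#\mu)$, and then verifies linearity, the Leibniz rule, bounded weak-$*$ continuity by the same device you use ($\{f_\a\circ\pi\}$ is a bounded net converging pointwise, apply Lemma \ref{lemma_weakstarlip}, test against $h\circ\pi$), uniqueness by pairing against all $h$, and the bound \eqref{eq_pushfwdbd} from $\|f\circ\pi\|_{\Lip}\le(1\vee L(\pi))\,\|f\|_{\Lip}$; on these points your write-up is, if anything, more explicit than the paper's.

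The one divergence is your closing paragraph, and there your diagnosis is sharper than your remedy. You are right that representing $T_f$ by an element of $\Li(Y,\pi_\#\mu)$, with a.e.-uniqueness, presupposes the duality $L^1(Y,\pi_\#\mu)^*\cong\Li(Y,\pi_\#\mu)$, which the paper invokes silently (``Clearly \ldots there is a unique function''). But the repair you propose cannot work in the very case you cite: if some fiber $\pi^{-1}(y_0)$ has infinite $\mu$-measure (a constant $\pi$ with $\mu(X)=\infty$, or $\pi(x)=x_1\vee 0$ on $(\R^2,m_2)$), then $\pi_\#\mu(\{y_0\})=\infty$, so $\pi_\#\mu$ is concentrated on \emph{no} $\sigma$-finite set, and conditional expectation onto the $\sigma$-algebra generated by $\pi$ is equally unavailable. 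Worse, in that generality the defect sits in the statement, not merely the proof: every $h\in L^1(Y,\pi_\#\mu)$ must vanish at $y_0$, so pairings with $L^1$ cannot see the atom, and (taking $Y=\R$, $y_n=y_0+1/n$, say) the map $Df := c(f)\,\chi_{\{y_0\}}$, where $c(f)$ is a Banach limit of the bounded sequence $n\,(f(y_n)-f(y_0))$, is a \emph{nonzero} element of $\U(Y,\pi_\#\mu)$: it is linear and bounded, the Leibniz rule holds pointwise at $y_0$ because $c$ is a point derivation there, and the continuity requirement of Definition \ref{defn_deriv} is vacuous since all its pairings with $L^1$ vanish. Then $\pi_\#\d$ and $\pi_\#\d+D$ both satisfy \eqref{eq_pushfwd}, so uniqueness genuinely fails. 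The correct fix is therefore a hypothesis rather than a cleverer reduction: assume $\pi_\#\mu$ is $\sigma$-finite (or at least localizable), under which your argument and the paper's go through verbatim. That hypothesis is automatic, e.g., when $\mu$ is finite or when $\pi$ is bi-Lipschitz and $X$ separable, but it deserves to be checked at each place the lemma is applied.
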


We refer to $\pi_\#\d$ as the \emph{pushforward (derivation)} of $\d$ with respect to $\pi$.

\begin{proof}
%As a shorthand, p
Put $\nu := \pi_\#\mu$.  For $h \in L^1(Y,\nu)$, formula \eqref{eq_changevari} gives $h \circ \pi \in L^1(X,\mu)$, with $\|h\|_{\nu,\,1} = \|h \circ \pi\|_{\mu,\,1}$.  For each $f \in \Lip_b(Y)$, define a map $\lambda_{f,\,\pi} : L^1(Y,\nu) \to \R$ by
$$
\lambda_{f,\,\pi}(h) \;:=\;
\int_X (h \circ \pi) \cdot \d(f \circ \pi) \,d\mu.
$$
Clearly $\lambda_{f,\,\pi}$ is linear and bounded, so there is a unique function $(\pi_\#\d)f \in \Li(Y,\nu)$ which satisfies, for all $h \in L^1(Y,\nu)$, the identity
$$
\int_Y h \cdot (\pi_\#\d)f \,d\nu \;=\;
\int_X (h \circ \pi) \cdot \d(f \circ \pi) \,d\mu.
$$
As constructed, the map $\pi_\#\d : f \mapsto (\pi_\#\d)f$ satisfies formula \eqref{eq_pushfwd}.
Moreover, it is linear because $\d$ is linear; the same is true of the Leibniz rule.

To show that $\pi_\#\d$ is continuous, suppose $\{f_\a\}_{\a \in I}$ is a net in $\Lip_b(Y)$ that converges pointwise to $0$ %$f \in \Lip_b(Y)$ 
and so that $C := \sup_\a \|f_\a\|_{\Lip} < \infty$.  Clearly $f_\a \circ \pi$ converges pointwise to $0$, and from the estimates
\begin{equation}
\label{eq_temppushfwd}
\left.\begin{split}
\|f_\a \circ \pi\|_\infty &\;\leq\;
\|f_\a\|_\infty \;\leq\; C \\
\hspace{1.2in}
L(f_\a \circ \pi) &\;\leq\;
L(f_\a) \cdot L(\pi) \;\leq\; C \cdot L(\pi)
\hspace{.8in}
\end{split}\right\}
\end{equation}
the net $\{f_\a \circ \pi\}_{\a \in I}$ is bounded in $\Lip_b(X)$.  By Lemma \ref{lemma_weakstarlip} and the continuity of $\d$, we obtain $\d(f_\a \circ \pi) \wsto 0$ %\d(f \circ \pi) 
in $\Li(X,\mu)$.  Since $h \in L^1(Y,\nu)$ implies $h \circ \pi \in L^1(X,\mu)$, it follows that $(\pi_\#\d)f_\a \wsto 0$ %(\pi_\#\d)f 
in $\Li(Y,\nu)$.  

Let $f \in \Lip_b(Y)$.  If $X$ is separable, then by the estimates \eqref{eq_temppushfwd}, we obtain
\begin{eqnarray*}
\|(\pi_\#\d)f\|_{\mu,\,\infty} \;=\;
\|\lambda_{f,\,\pi}\|_{\rm op} &\leq&
\|h\|_{\nu,\,1} \cdot \|\d(f \circ \pi)\|_{\mu,\,\infty} \\ &\leq&
\|h\|_{\nu,\,1} \cdot \|\d\|_{\rm op} \cdot \|f \circ \pi\|_{\Lip} \\ &\leq&
\|h\|_{\nu,\,1} \cdot \|\d\|_{\rm op} \cdot C(1 \vee L(\pi)) \cdot \|f\|_{\Lip}.
\end{eqnarray*}
This implies inequality \eqref{eq_pushfwdbd}.  Lastly, suppose that
$\d' \in \U(Y,\nu)$ also satisfies formula \eqref{eq_pushfwd}.  By
linearity, we have, for all $h \in L^1(Y,\nu)$ and all $f \in \Lip_b(Y)$,
$$
\int_Y h \cdot (\pi_\#\d - \d')f \, \d\nu \;=\; 0.
$$
This means that $\pi_\#\d = \d'$, which gives the desired uniqueness.
\end{proof}

For $\pi \in \Lip(X;Y)$, note that $\U(X,\mu)$ is an $\Li(Y,\pi_\#\mu)$-module.  Indeed, for $\lambda \in \Li(Y,\pi_\#\mu)$, $f \in \Lip_b(X)$, and $\d \in \U(X,\mu)$, the action is given by
\begin{equation} \label{eq_pushfwdaction}
(\lambda \cdot \d) f \;:=\; (\lambda \circ \pi) \cdot \d f.
\end{equation}
Recall that an embedding $\pi : X \to Y$ is \emph{bi-Lipschitz} if $\pi$ and $\pi^{-1}$ are both Lipschitz maps; it is $\lambda$-bi-Lipschitz if $L(\pi) \leq \lambda$ and $L(\pi^{-1}) \leq \lambda$.

So if $\pi$ is bi-Lipschitz, then the proof of Lemma \ref{lemma_pushfwd} (with $\pi^{-1}$ for $\pi$) also shows that $\U(Y,\pi_\#\mu)$ is an $\Li(X,\mu)$-module.  Under an appropriate choice of measures, we now obtain a ``functorial'' property of pushforward derivations with respect to bi-Lipschitz embeddings.

\begin{cor}
Let $(X,\rho_X,\mu)$ and $(Y,\rho_Y,\nu)$ be metric measure spaces, with $\mu$ a Borel measure, and let $\pi : X \hookrightarrow Y$ be a bi-Lipschitz embedding.  If $\nu$ and $\pi_\#\mu$ are mutually absolutely continuous, then $\U(X,\mu)$ and $\U(Y,\nu)$ are isomorphic as $\Li(X,\mu)$-modules.
\end{cor}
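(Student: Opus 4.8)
The plan is to use the pushforward construction of Lemma \ref{lemma_pushfwd} as the candidate isomorphism, exploiting that a bi-Lipschitz embedding admits a Lipschitz inverse, and then to absorb the discrepancy between $\nu$ and $\pi_\#\mu$ by mutual absolute continuity. First I would trim the ambient space. Since $\pi_\#\mu$ is concentrated on $\pi(X)$ (its complement being $\pi_\#\mu$-null) and $\nu\ll\pi_\#\mu$, both $\pi_\#\mu$ and $\nu$ are concentrated on $\pi(X)$; so by the locality theorem (Theorem \ref{thm_locality}) I may replace $Y$ by the metric subspace $\pi(X)$, getting $\U(Y,\pi_\#\mu)\cong\U(\pi(X),\pi_\#\mu)$ and $\U(Y,\nu)\cong\U(\pi(X),\nu)$. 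On $\pi(X)$ the map $\pi\colon X\to\pi(X)$ is a genuine bijection with bona fide Lipschitz inverse $\pi^{-1}$, which avoids the nuisance of extending $\pi^{-1}$ to all of $Y$ before applying the lemma.

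Next I would establish that $\U(X,\mu)\cong\U(Y,\pi_\#\mu)$ as $\Li(X,\mu)$-modules. Since $(\pi^{-1})_\#(\pi_\#\mu)=\mu$, Lemma \ref{lemma_pushfwd} furnishes both $\pi_\#\colon\U(X,\mu)\to\U(Y,\pi_\#\mu)$ and $(\pi^{-1})_\#\colon\U(Y,\pi_\#\mu)\to\U(X,\mu)$. The crux is that these are mutually inverse: feeding $h=k\circ\pi^{-1}$ and $f=g\circ\pi^{-1}$ into the defining identity \eqref{eq_pushfwd} for $\pi_\#\d$ and comparing with the defining identity for $(\pi^{-1})_\#(\pi_\#\d)$ collapses the composite to $\int_X k\cdot\d g\,d\mu$ for every $k\in L^1(X,\mu)$; the uniqueness clause of Lemma \ref{lemma_pushfwd} then forces $(\pi^{-1})_\#\pi_\#\d=\d$, and symmetrically $\pi_\#(\pi^{-1})_\#\eta=\eta$. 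A parallel computation, substituting $h\cdot(\lambda\circ\pi^{-1})$ for $h$ in \eqref{eq_pushfwd}, shows that $\pi_\#$ intertwines the standard $\Li(X,\mu)$-action on $\U(X,\mu)$ with the action \eqref{eq_pushfwdaction} (with $\pi^{-1}$ in place of $\pi$) on $\U(Y,\pi_\#\mu)$, so $\pi_\#$ is an $\Li(X,\mu)$-module isomorphism.

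Finally I would identify $\U(Y,\pi_\#\mu)$ with $\U(Y,\nu)$. Because $\nu$ and $\pi_\#\mu$ have the same null sets, the spaces $\Li(Y,\pi_\#\mu)$ and $\Li(Y,\nu)$ coincide as Banach spaces and as rings (the essential supremum depends only on the null sets), so the Leibniz rule and the module action by pointwise multiplication are literally the same. It remains to check that the weak-$*$ continuity condition of Definition \ref{defn_deriv} is insensitive to the swap: writing $w:=d\nu/d(\pi_\#\mu)$, which is positive a.e.\ by mutual absolute continuity, the map $h\mapsto hw$ is an isometry $L^1(Y,\nu)\to L^1(Y,\pi_\#\mu)$ that preserves the dual pairing, so a bounded net converges weak-$*$ in $\Li(Y,\nu)$ if and only if it converges weak-$*$ in $\Li(Y,\pi_\#\mu)$. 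Hence the two classes of derivations coincide, with identical $\Li(X,\mu)$-module structure, and composing with the isomorphism of the previous step proves the corollary.

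I expect the main obstacle to be the bookkeeping in the middle step: verifying that $\pi_\#$ and $(\pi^{-1})_\#$ are mutually inverse and $\Li(X,\mu)$-linear while keeping straight which module structure (the standard one on $\U(X,\mu)$ versus the $\pi^{-1}$-twisted structure \eqref{eq_pushfwdaction} on $\U(Y,\pi_\#\mu)$) is in force on each side. The uniqueness assertion of Lemma \ref{lemma_pushfwd} is exactly what makes these identities go through cleanly, and the measure-swap step, though conceptually the subtlest point, reduces to the Radon--Nikodym observation above.
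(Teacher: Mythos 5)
Your proposal is correct and takes essentially the same route the paper intends: the paper states this corollary without an explicit proof, as an immediate consequence of Lemma \ref{lemma_pushfwd} applied to both $\pi$ and $\pi^{-1}$ together with the module action \eqref{eq_pushfwdaction}, which is precisely your middle step. The details you supply --- mutual inverses of $\pi_\#$ and $(\pi^{-1})_\#$ via the uniqueness clause of Lemma \ref{lemma_pushfwd}, restriction to $\pi(X)$ via Theorem \ref{thm_locality}, and the Radon--Nikodym argument showing that mutual absolute continuity leaves $\Li(Y,\cdot)$, its weak-$*$ topology, and hence the class of derivations unchanged --- correctly fill in what the paper leaves implicit.
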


\subsection{The Chain Rule}

On Euclidean spaces, derivations exhibit behavior similar to that of the differential operators $\{\partial_i\}_{i=1}^n$.  For instance, they satisfy a weak form of the Chain Rule from differential calculus.  To formulate this fact, recall that by Theorem \ref{thm_extlip}, each $\d x_j$ is a well-defined function in $\Li(\R^n,\mu)$ for $j = 1, 2, \ldots, n$.

\begin{lemma} \label{lemma_chainrule}
Let $\mu$ be a Radon measure on $\R^n$.  For each $f \in \Lip(\R^n)$, there exist %$g_f^i \in L^\infty(\R^n,\mu)$, $1 \leq i \leq n$, so that
functions $\{g_f^i\}_{=1}^n \subset \Li(\R^n,\mu)$ so that
\begin{eqnarray}
\label{eq_chainrulebd}
\|g_f^i\|_{\mu,\,\infty} &\leq& L(f) \\
\label{eq_chainrule}
\d f &=& \sum_{i=1}^n g_f^i \cdot \d x_i.
\end{eqnarray}
If $f$ is smooth, then $g_f^i = \partial_if$ for $i = 1, 2, \ldots, n$.
\end{lemma}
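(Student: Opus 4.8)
The plan is to extend $\d$ to the operator $\bar\d$ on $\Lip(\R^n)$ furnished by Theorem \ref{thm_extlip} and to work throughout with $\bar\d$, writing $\d x_i := \bar\d x_i$; since $L(x_i) = 1$, part (4) of that theorem gives $\d x_i \in \Li(\R^n,\mu)$ with $\|\d x_i\|_{\mu,\infty} \le \|\d\|_{\rm op}$. All of the asserted identities are proved on an arbitrary ball $B$ and then patched together, since balls exhaust $\R^n$. Because $\bar\d$ is linear, satisfies $\bar\d c = 0$ for constants $c$, and obeys the Leibniz rule (Theorem \ref{thm_extlip}(3)), an induction on the degree of a monomial gives $\bar\d(x^\alpha) = \sum_{i=1}^n \partial_i(x^\alpha)\,\d x_i$, and linearity upgrades this to $\bar\d P = \sum_{i=1}^n (\partial_i P)\,\d x_i$ for every polynomial $P$.

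Next comes the smooth case, which also yields the final assertion of the lemma. Fix $f \in C^\infty(\R^n)$ and a ball $B$, and choose polynomials $P_k$ with $P_k \to f$ and $\partial_i P_k \to \partial_i f$ uniformly on $\bar B$ (simultaneous $C^1$ approximation). On the right-hand side, $\chi_B(\partial_i P_k)\,\d x_i \to \chi_B(\partial_i f)\,\d x_i$ in the norm of $\Li(\R^n,\mu)$, since $\|\partial_i P_k - \partial_i f\|_{\infty,B} \to 0$ and $\d x_i \in \Li(\R^n,\mu)$. For the left-hand side I would use Theorem \ref{thm_extlip}(2) to express $\chi_B \bar\d P_k$ and $\chi_B \bar\d f$ through derivations of bounded McShane extensions; as $P_k \to f$ in $C^1(\bar B)$, the extensions $(P_k|B)^B$ stay bounded in $\Lip_b(\R^n)$ and converge pointwise to $(f|B)^B$ (their Lipschitz constants converge because $\sup_{\bar B}|\nabla P_k| \to \sup_{\bar B}|\nabla f|$), so the continuity of $\d$ (Lemma \ref{lemma_ctyderiv}) gives $\chi_B \bar\d P_k \wsto \chi_B \bar\d f$. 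Equating the two weak-$*$ limits and letting $B$ exhaust $\R^n$ proves $\bar\d f = \sum_{i=1}^n (\partial_i f)\,\d x_i$; taking $g_f^i := \partial_i f$ settles the smooth case and the final sentence, with $\|g_f^i\|_{\mu,\infty} \le \|\partial_i f\|_\infty \le L(f)$.

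Finally, for arbitrary $f \in \Lip(\R^n)$ I would mollify, setting $f_\e := f * \rho_\e$, so that $f_\e$ is smooth, $f_\e \to f$ uniformly on $\R^n$, and $L(f_\e) \le L(f)$, whence $\|\partial_i f_\e\|_\infty \le L(f)$. The smooth case gives $\bar\d f_\e = \sum_{i=1}^n (\partial_i f_\e)\,\d x_i$. The family $\{\partial_i f_\e\}_{\e>0}$ is bounded in $\Li(\R^n,\mu) = (L^1(\R^n,\mu))^*$, and since $L^1(\R^n,\mu)$ is separable (as $\mu$ is Radon) I extract a single sequence $\e_k \to 0$ along which $\partial_i f_{\e_k} \wsto g_f^i$ for each $i$; weak-$*$ lower semicontinuity of the norm then yields \eqref{eq_chainrulebd}. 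To pass to the limit in the chain rule on a ball $B$, the right-hand side converges because $h\,\chi_B\,\d x_i \in L^1(\R^n,\mu)$ for every $h \in L^1(\R^n,\mu)$. The left-hand side is the delicate point: the canonical McShane extensions of $f_{\e_k}|B$ need not converge pointwise, since their Lipschitz constants need not converge. I would get around this by extending $f_{\e_k}|B$ and $f|B$ with the \emph{fixed} constant $L(f)$; these extensions converge uniformly on $\R^n$ because $\|f_{\e_k} - f\|_{\infty,\bar B} \to 0$, and by the locality property (Theorem \ref{thm_locality}) replacing $(f_{\e_k}|B)^B$ by them changes neither $\chi_B \bar\d f_{\e_k}$ nor $\chi_B \bar\d f$. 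Continuity of $\d$ then gives $\chi_B \bar\d f_{\e_k} \wsto \chi_B \bar\d f$, and equating limits over an exhaustion by balls yields $\d f = \sum_{i=1}^n g_f^i\,\d x_i$. I expect this last point --- securing pointwise convergence of the extensions when only the boundedness, and not the convergence, of the Lipschitz constants is available --- to be the main obstacle, with the fixed-constant extension together with locality the key device for resolving it.
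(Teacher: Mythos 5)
Your proposal is correct, and its analytic core is the same as the paper's: first polynomials via the Leibniz rule, then smooth Lipschitz functions via simultaneous $C^1$ polynomial approximation (Lemma \ref{lemma_polyapprox}) and uniqueness of weak-$*$ limits, then general Lipschitz functions via mollification, a Banach--Alaoglu extraction of weak-$*$ limits $g_f^i$, and lower semicontinuity of the $\Li$-norm for \eqref{eq_chainrulebd}. Where you differ is the localization bookkeeping. The paper writes $\R^n$ as a countable union of closed cubes $Q_k$ and uses Theorem \ref{thm_locality} to regard $\chi_{Q_k}\d$ as an element of $\U(Q_k,\mu)$; restrictions of Lipschitz functions to a compact cube lie in $\Lip_b(Q_k)$ automatically, so uniform convergence together with uniformly bounded Lipschitz constants immediately gives weak-$*$ convergence there (Lemma \ref{lemma_weakstarlip}), and no extension ever has to be taken. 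You instead keep the derivation on all of $\R^n$, work with $\bar\d$ from Theorem \ref{thm_extlip}, and pass through bounded McShane extensions; this is precisely what creates the obstacle you flag --- the canonical extensions $(f_{\e_k}|B)^B$ are built from the constants $L(f_{\e_k}|B)$, which need not converge --- and your fix (extend every $f_{\e_k}|B$ and $f|B$ with the single constant $L(f)$, so that the extensions converge uniformly, then invoke locality to see that the replacement changes neither $\chi_B\bar\d f_{\e_k}$ nor $\chi_B\bar\d f$) is sound. Two small points to tighten: the fixed-constant extension grows linearly, so to stay inside $\Lip_b(\R^n)$, where the continuity property of $\d$ applies, you should also truncate at $\pm\sup_k\|f_{\e_k}\|_{\infty,\bar B}$, exactly as in Lemma \ref{lemma_bddmcshane}; and your extraction of a weak-$*$ convergent \emph{sequence} uses separability of $L^1(\R^n,\mu)$ for Radon $\mu$, a fact the paper's own subsequence extraction tacitly relies on as well. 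In short: same proof in substance, but the paper's restriction-to-cubes device makes your ``main obstacle'' vanish, whereas your globally-extended formulation has to resolve it by hand.
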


We refer to Lemma \ref{lemma_chainrule} as the Chain Rule for derivations.  Its proof uses a classical fact about approximation of smooth functions \cite[Thm II.4.3]{CourantHilbert}.

\begin{lemma} \label{lemma_polyapprox}
Let $\varphi \in C^\infty(\R^n)$.  For each compact subset $K$ of $\R^n$, there is a sequence of polynomials $\{P_m\}_{m =1}^\infty$ so that on $K$, we have the uniform convergence $P_m \to f$ and $\partial_i P_m \to \partial_i f$, for $1 \leq i \leq n$.
\end{lemma}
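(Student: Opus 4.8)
The plan is to prove Lemma \ref{lemma_polyapprox}, the polynomial approximation statement, which is a classical fact cited from Courant-Hilbert but which I will sketch self-contained. Note first the apparent typo in the statement: the hypothesis is $\varphi \in C^\infty(\R^n)$ but the conclusion refers to $f$; I read $f = \varphi$ throughout. The goal is, for a fixed compact $K \subset \R^n$ and a fixed $C^\infty$ function $f$, to produce polynomials $P_m$ with $P_m \to f$ and $\partial_i P_m \to \partial_i f$ uniformly on $K$ for each $i$.

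Let me sketch how I would prove this from scratch. The cleanest route is to reduce to the one-variable Weierstrass theorem by a product/tensor argument, but the derivative control is most transparently obtained via \textbf{mollification combined with Taylor-polynomial truncation}. Concretely, enlarge $K$ to a closed ball $\bar{B}(0,R)$ containing a neighborhood of $K$, and work on $\bar{B}(0,R+1)$. First, since $f \in C^\infty$, both $f$ and each $\partial_i f$ are uniformly continuous on the compact set $\bar{B}(0,R+1)$. The strategy is then to approximate $f$ in the $C^1$-norm on $\bar{B}(0,R)$ by a single \emph{analytic} function whose power series converges uniformly together with its first derivatives, and then truncate the power series to a polynomial. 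The analytic approximant is furnished by convolution with a Gaussian (or any real-analytic mollifier): set
$$
f_t(x) \;:=\; \int_{\R^n} f(y) \, (4\pi t)^{-n/2} \, e^{-|x-y|^2/(4t)} \,dy,
$$
after first multiplying $f$ by a smooth cutoff supported in $\bar{B}(0,R+1)$ so the integral is over a compact set. Each $f_t$ is real-analytic on all of $\R^n$, and standard mollification estimates give $f_t \to f$ and $\partial_i f_t \to \partial_i f$ uniformly on $\bar{B}(0,R)$ as $t \to 0^+$ (the latter because $\partial_i f_t = (\partial_i f)_t$ on the region where the cutoff is identically $1$).

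Once $t$ is fixed small enough that $f_t$ is within $\e/2$ of $f$ in the $C^1$-norm on $\bar{B}(0,R)$, I truncate. Being real-analytic, $f_t$ equals the sum of its Taylor series about the origin, and on the compact ball this series converges uniformly \emph{together with the termwise-differentiated series} (uniform convergence of a power series with its formal derivatives on compact subsets of the domain of analyticity is a standard fact). Hence the partial sums $P_m$ — which are polynomials — satisfy $P_m \to f_t$ and $\partial_i P_m \to \partial_i f_t$ uniformly on $\bar{B}(0,R)$, so for $m$ large the total $C^1$-error against $f$ is below $\e$. Restricting to $K \subset \bar{B}(0,R)$ and running this for a sequence $\e_m \downarrow 0$ produces the desired sequence.

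The \textbf{main obstacle} is the simultaneous control of the function and its first derivatives: a naive appeal to Weierstrass gives uniform approximation of $f$ alone, but says nothing about $\partial_i P_m$, and one cannot in general differentiate a uniformly convergent sequence term by term. The mollification step handles the \emph{derivatives of the target} (via $\partial_i f_t = (\partial_i f)_t$), while the real-analyticity of the Gaussian mollifier is exactly what upgrades ordinary power-series convergence to convergence-with-derivatives, closing the gap. An equivalent and slightly more elementary alternative, should one wish to avoid analyticity, is to approximate each $\partial_i f$ uniformly by polynomials $Q^i_m$ via Weierstrass, check a compatibility (closedness) condition, and integrate to recover $P_m$ with $\partial_i P_m = Q^i_m$; but arranging the $Q^i_m$ to be consistent gradients is itself delicate, so the mollifier route is preferable.
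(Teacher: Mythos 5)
Your proposal cannot diverge from ``the paper's proof'' because the paper gives none: Lemma \ref{lemma_polyapprox} is quoted as a classical fact with a bare citation to Courant--Hilbert. What you have written is essentially a self-contained version of the classical Weierstrass-style argument behind that citation (heat-kernel smoothing plus power-series truncation), and the outline is correct: cut off $f$, convolve with a Gaussian to achieve $C^1$-closeness on a ball containing $K$, then truncate the series of the smoothed function.

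One step is misjustified as stated. You write that ``being real-analytic, $f_t$ equals the sum of its Taylor series about the origin'' and that this series converges, with its formal derivatives, on the whole compact ball. Real-analyticity on all of $\R^n$ does \emph{not} imply this: $x \mapsto 1/(1+x^2)$ is real-analytic on all of $\R$, yet its Maclaurin series converges only on $(-1,1)$. What rescues the argument is the specific form of $f_t$: writing $g := \chi f$ for the cut-off function, $g$ has compact support and the Gaussian kernel extends to an entire function of $x \in \mathbb{C}^n$, so $f_t = g * G_t$ is \emph{entire}; for an entire function the Taylor series about the origin converges absolutely and uniformly, together with all termwise derivatives, on every compact subset of $\R^n$, and the degree-$\leq m$ partial sums are the desired polynomials. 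You should substitute this observation for the appeal to real-analyticity. A second, more minor imprecision: since the Gaussian is not compactly supported, the identity $\partial_i f_t = (\partial_i f)_t$ does not hold literally ``on the region where the cutoff is $1$''; the correct chain is $\partial_i(g*G_t) = (\partial_i g)*G_t \to \partial_i g$ uniformly on compacta, and $\partial_i g = \partial_i f$ on $\bar{B}(0,R)$ because $\chi \equiv 1$ on a neighborhood of that ball. With these two repairs your proof is complete.
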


\begin{proof}[Proof of Lemma \ref{lemma_chainrule}]
Since $\R^n$ is a countable union of closed cubes $\{Q_k\}_{k=1}^\infty$, it suffices to show formula \eqref{eq_chainrule} for $\chi_{Q_k}\d$ in place of $\d$.  By the locality property, we therefore assume that $\d \in \U(Q_k,\mu)$.

We now argue by cases.  Formula \eqref{eq_chainrule} clearly holds when $f = x_j$, for each $j = 1, 2, \ldots, n$, and where $g_f^i$ is the Kronecker symbol $\e_{ij}$.  If $f$ is a polynomial, then for each $a \in \N$, the Leibniz rule implies a ``power rule''
$$
\d(x_j^a) \;=\; a x_j^{a-1} \cdot \d x_j
$$
which further implies formula \eqref{eq_chainrule}, with $g_f^i := \partial_if$.

We next assume that $f$ is a smooth Lipschitz function on $\R^n$.  By Lemma \ref{lemma_polyapprox}, there is a sequence of polynomials $\{P_m\}_{m=1}^\infty$ which converges uniformly to $f$ on $K$ and where $\{\nabla P_m\}_{m=1}^\infty$ converges uniformly to $\nabla f$.  This implies that $P_m \wsto f$ in $\Lip_b(Q_k)$, and by continuity of $\d$, we obtain $\d P_m \wsto \d f$ in $\Li(Q_k,\mu)$.  On the other hand, the convergence $\nabla P_m \to \nabla f$ is uniform, hence weak-$*$.  It follows that $\partial_iP_m \d x_i \wsto \partial_i f \d x_i$ in $\Li(Q_k,\mu)$ and by uniqueness of limits, we obtain formula \eqref{eq_chainrule}, where again $g_f^i := \partial_if$.

For the general case, let $\e > 0$ be arbitrary, let $\eta_\e$ be a smooth, symmetric mollifier, and consider convolutions $f_\e := f * \eta_\e$.  It is a fact \cite[Thm 4.2.1.1]{EvansGariepy} that if $f$ is continuous, then $f_\e$ converges locally uniformly to $f$.  Moreover, the bound $L(f_\e) \leq L(f)$ follows from the computation
\begin{eqnarray*}
|f_\e(x) - f_\e(y)| &\leq&
\int_{\R^n} \eta_\e(z) \cdot |f(x-z) - f(y-z)| \,dz \\ &\leq&
\int_{\R^n} \eta_\e(z) \cdot L(f) \cdot |(x-z) - (y-z)| \,dz \,\leq\,
L(f) \cdot |x-y|.
\end{eqnarray*}
This implies that $f_\e \wsto f$ in $\Lip_b(Q_k)$ and from the continuity of $\d$, we also have $\d f_\e \wsto \d f$ in $\Li(Q_k,\mu)$.

However, note that formula \eqref{eq_chainrule} holds for each $f_\e$, where $g_f^i = \partial_if_\e$, and note that $\{\partial_if_{1/a}\}_{a=1}^\infty$ is a bounded sequence in $\Li(\R^n,\mu)$, for each $i$.  It follows from the Banach-Alaoglu Theorem that there are weak-$*$ convergent subsequences $\{\partial_if_{1/{a_b}}\}_{b=1}^\infty$ with weak-$*$ limits $g_f^i$.

By uniqueness of limits, formula \eqref{eq_chainrule} holds for $f$ with these choices of $g_f^i$.  Since the norm on $\Li(\R^n,\mu)$ %a dual Banach space 
is lower semi-continuous (with respect to the weak-$*$ topology), formula \eqref{eq_chainrulebd} follows from
$$
\|g_f^i\|_{\mu,\,\infty} \;\leq\; 
\liminf_{b \to \infty} \|\partial_if_{1/a_b}\|_{\mu,\,\infty} \;\leq\; L(f)
\qedhere.
$$
\end{proof}

The next corollary is a criterion for detecting nonzero derivations on $\R^n$.  It follows directly from Lemma \ref{lemma_chainrule}, so we omit the proof.

\begin{cor} \label{cor_zerocoord}
Let $\mu$ be a Radon measure on $\R^n$ and let $\d \in \U(\R^n,\mu)$.  If $\d x_j = 0$ holds for each $j = 1, 2, \ldots, n$, then $\d = 0$.
\end{cor}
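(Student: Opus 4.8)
The plan is to deduce this directly from the Chain Rule, Lemma \ref{lemma_chainrule}. The key observation is that a derivation $\d \in \U(\R^n,\mu)$ is completely determined by its action on $\Lip_b(\R^n)$, so it suffices to show that $\d f = 0$ for every bounded Lipschitz function $f$.

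First I would fix an arbitrary $f \in \Lip_b(\R^n)$ and invoke Lemma \ref{lemma_chainrule} to produce functions $\{g_f^i\}_{i=1}^n \subset \Li(\R^n,\mu)$ for which the representation
$$
\d f \;=\; \sum_{i=1}^n g_f^i \cdot \d x_i
$$
holds in $\Li(\R^n,\mu)$. Here each $\d x_i$ is the well-defined element of $\Li(\R^n,\mu)$ furnished by Theorem \ref{thm_extlip}, since the coordinate functions $x_i$ are Lipschitz but unbounded; each product $g_f^i \cdot \d x_i$ makes sense because both factors lie in $\Li(\R^n,\mu)$.

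Next I would simply substitute the hypothesis. By assumption $\d x_i = 0$ for every $i = 1, 2, \ldots, n$, so each summand $g_f^i \cdot \d x_i$ vanishes as an element of $\Li(\R^n,\mu)$, whence $\d f = 0$. As $f \in \Lip_b(\R^n)$ was arbitrary, this yields $\d = 0$ in $\U(\R^n,\mu)$.

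There is no genuine obstacle here, as all of the substantive work has already been carried out in establishing the Chain Rule. The only point requiring care is the bookkeeping that the coordinate functionals $\d x_i$ appearing in the representation coincide with those assumed to vanish --- equivalently, that the extension $\bar\d$ of Theorem \ref{thm_extlip} is compatible with the representation of Lemma \ref{lemma_chainrule} --- but this is immediate from the construction of $\bar\d$ and the statement of the Chain Rule itself.
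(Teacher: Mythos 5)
Your proposal is correct and is precisely the argument the paper intends: the paper omits the proof, stating that the corollary ``follows directly from Lemma \ref{lemma_chainrule},'' and your write-up simply makes that deduction explicit --- apply the Chain Rule representation $\d f = \sum_{i=1}^n g_f^i \cdot \d x_i$ and substitute $\d x_i = 0$. Your remark about the compatibility of the extension $\bar\d$ from Theorem \ref{thm_extlip} with the Chain Rule is the right bookkeeping point and poses no difficulty.
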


%\vfill
%\pagebreak
%=========================================================================

\section{Derivations on $1$-Dimensional Sets} \label{sect_1dim}

Adapting the terminology in \cite{FalconerBook}, a subset of $\R^n$ is called a \emph{$k$-set} if it is $\H^k$-measurable and has $\sigma$-finite $\H^k$-measure.  In this section we will focus on the following fact about measures concentrated on $1$-sets and their derivations.

\begin{thm} \label{thm_1sets}
Let $\mu$ be a Radon measure on $\R^n$.  If $\mu$ is concentrated on a $1$-set, then the module $\U(\R^n,\mu)$ has rank at most $1$.
\end{thm}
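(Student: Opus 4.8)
The plan is to translate the rank condition into a pointwise statement about the coordinate fields of derivations, and then to verify it separately on the rectifiable and unrectifiable parts of the $1$-set.

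First I would set up the reduction. To each $\d\in\U(\R^n,\mu)$ associate the measurable field $V_\d:=(\d x_1,\dots,\d x_n)\in\Li(\R^n,\mu)^n$; by the Chain Rule (Lemma \ref{lemma_chainrule}) the operator $\d$ is determined by $V_\d$, and by Corollary \ref{cor_zerocoord} one has $\lambda_1\d_1+\lambda_2\d_2=0$ if and only if $\lambda_1 V_{\d_1}+\lambda_2 V_{\d_2}=0$ $\mu$-a.e. I claim that $\U(\R^n,\mu)$ has rank at most $1$ precisely when, for every pair $\d_1,\d_2$, the vectors $V_{\d_1}(x)$ and $V_{\d_2}(x)$ are parallel for $\mu$-a.e.\ $x$. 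The forward direction uses Lemma \ref{lemma_lindepsubset}: if the two fields were linearly independent on a set $B$ of positive measure, then $\chi_B\d_1,\chi_B\d_2$ would be linearly independent, contradicting the rank bound. For the converse, writing $V_{\d_k}=a_k\tau$ for a measurable unit field $\tau$ and bounded scalars $a_k=\langle V_{\d_k},\tau\rangle$, the choice $\lambda_1:=a_2$, $\lambda_2:=-a_1$ produces a relation $\lambda_1\d_1+\lambda_2\d_2=0$ that is nontrivial unless both fields vanish (in which case $\d_1=\d_2=0$). Since parallelism is a pointwise condition, the locality isomorphism (Theorem \ref{thm_locality}) lets me verify it on the pieces of any $\mu$-measurable decomposition of the underlying $1$-set $S$.

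Next I would invoke the structure theorem for $1$-sets to split $S$, up to an $\H^1$-null set, into a $1$-rectifiable part $R$ and a purely $1$-unrectifiable part; I absorb the $\H^1$-null remainder into the latter and call the union $E$. On $R$ the argument is soft: $R$ is a countable union of bi-Lipschitz images $\gamma_i(A_i)$ of subsets $A_i\subset\R$, and on each piece the bi-Lipschitz pushforward (Lemma \ref{lemma_pushfwd} and the functoriality corollary following it) gives an $\Li$-module isomorphism $\U(\gamma_i(A_i),\mu)\cong\U(\R,(\gamma_i^{-1})_\#\mu)$. Because the one-dimensional Chain Rule shows that every derivation on $\R$ is a single $\Li$-multiple of $\d x$, the module $\U(\R,\nu)$ has rank at most $1$ for \emph{any} measure $\nu$; hence so does $\U(\gamma_i(A_i),\mu)$, and the coordinate fields of $\d_1,\d_2$ are parallel $\mu$-a.e.\ on each $\gamma_i(A_i)$.

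The core of the proof is the unrectifiable part $E$, where I expect the main difficulty to lie, and there I would prove the stronger statement $\U(\R^n,\mu\lfloor E)=0$. By the Besicovitch--Federer projection theorem, the orthogonal projection $\proj_{\R v}(E)$ is $m_1$-null for $m_1$-a.e.\ direction $v\in S^{n-1}$ (for the $\H^1$-null portion this holds for every $v$, as projections are $1$-Lipschitz). Fix such a $v$, write $\ell_v(x):=\langle x,v\rangle$, and note $\d\ell_v=\langle V_\d,v\rangle$. For an arbitrary bounded measurable set $A$, the pushforward $(\ell_v)_\#(\mu\lfloor A)$ is concentrated on the $m_1$-null set $\ell_v(E)$; by the one-dimensional vanishing input --- that a measure on $\R$ concentrated on an $m_1$-null set admits no nonzero derivation --- the pushforward of $\chi_A\d$ under $\ell_v$ is zero. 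Unwinding the defining identity \eqref{eq_pushfwd} with test function $h\equiv 1$ yields $\int_A\langle V_\d,v\rangle\,d\mu=0$. Taking $A=\{\langle V_\d,v\rangle>0\}\cap B(0,r)$ and then its negative counterpart forces $\langle V_\d,v\rangle=0$ $\mu$-a.e.; ranging over a countable dense set of admissible directions gives $V_\d=0$, hence $\d=0$ by Corollary \ref{cor_zerocoord}.

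Finally I would combine the two cases: the coordinate fields of any two derivations are parallel $\mu$-a.e.\ on $R$ and vanish $\mu$-a.e.\ on $E$, hence are parallel $\mu$-a.e.\ on all of $S$, which by the reduction of the first paragraph gives rank at most $1$. The principal obstacle is the unrectifiable part, which forces both the Besicovitch--Federer projection theorem and the one-dimensional rigidity input; care is also needed to localize to balls so that the integrals above are finite and so that the directions $v$ can be chosen from a fixed countable set independently of $\d$.
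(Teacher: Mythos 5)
Your overall strategy coincides with the paper's: decompose the $1$-set into a $1$-rectifiable part and a purely $1$-unrectifiable part (Lemma \ref{lemma_rectdecomp}), show all derivations vanish on the unrectifiable part via the Besicovitch--Federer projection theorem combined with the one-dimensional vanishing result, and handle the rectifiable part by bi-Lipschitz pushforwards to $\R$. Your unrectifiable-part argument (localizing to bounded sets, pushing forward under $\ell_v$, testing against $h \equiv 1$, and ranging over spanning admissible directions) is a correct rendition of the paper's Lemma \ref{lemma_pure1unrect}, and your final step deducing dependence from parallelism is also correct.

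The gap is in the forward direction of your claimed equivalence ``rank $\leq 1$ iff the coordinate fields of every pair are parallel $\mu$-a.e.'' With Definition \ref{defn_LI}, linear dependence only requires scalars in $\Li(X,\mu)$ that are nonzero somewhere in $X$, so rank $\leq 1$ does \emph{not} imply pointwise parallelism. Concretely, on $\R^2$ take $\mu = m_2\lfloor Q + \delta_p$ with $Q$ the unit square and $p \notin Q$: every pair (indeed every singleton) of derivations is annihilated by the nonzero scalar $\chi_{\{p\}}$, so the rank is at most $1$ (in fact $0$), yet $\chi_Q\partial_1$ and $\chi_Q\partial_2$ have non-parallel coordinate fields on $Q$. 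Your argument for the forward direction fails for the same reason: when $\mu(X\setminus B)>0$, the pair $\{\chi_B\d_1,\chi_B\d_2\}$ is \emph{never} linearly independent in $\U(X,\mu)$ (scalars supported off $B$ annihilate it), so no contradiction with the rank bound arises; note also that Lemma \ref{lemma_lindepsubset} runs in the opposite direction from the one you need. This matters precisely where you invoke it: on each piece $\gamma_i(A_i)$ you deduce parallelism from ``rank $\leq 1$ of $\U(\gamma_i(A_i),\mu)$,'' which is the false implication. The repair is to bypass rank entirely: the one-dimensional Chain Rule gives the factorization $\eta f = g_f \cdot \eta(\id_\R)$ with $g_f$ independent of $\eta$, and transporting this through $\gamma_i$ shows that the coordinate fields of \emph{all} derivations on $\gamma_i(A_i)$ are $\Li$-multiples of one common measurable field, from which parallelism follows. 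This is in substance what the paper does in Lemma \ref{lemma_1rect}, where a single generator of the module over the rectifiable part is constructed by gluing pushforward generators (Lemma \ref{lemma_gluing}), and dependence of any pair is then exhibited with explicit scalars.
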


The proof uses facts from geometric measure theory, which are discussed in \S\ref{sect_derivsGMT}.  We begin with a special case.

\subsection{The Case of $\R$}

Using the Borel regularity of Lebesgue measure, we prove Theorem \ref{thm_rigiditylowdim} for $k=1$, which is a characterization of $\U(\R,\mu)$.  To this end, recall that every Radon measure $\mu$ on $\R$ admits a decomposition $\mu = \mu_{AC} + \mu_S$, where $\mu_{AC}$ is absolutely continuous to $m_1$ and $\mu_S$ is singular to $m_1$ \cite[Thm 3.8]{Folland}.

\begin{lemma} \label{lemma_dim1}
Let $\mu$ be a Radon measure on $\R$.  If $\mu_S$ is concentrated on a Lebesgue null set $E$,
then for all $\d \in \U(\R,\mu)$ and all $f \in \Lip_b(\R)$,
\begin{equation} \label{eq_dim1}
\d f(x) \;=\; 
\begin{cases}
\d(\id_R)(x) \cdot f'(x), & \textrm{for } x \in \R \setminus E \\
0, & \textrm{for } x \in E
\end{cases}.
\end{equation}
where $f'$ is the classical derivative of $f$.  Moreover, as $\Li(\R,\mu)$-modules, 
$$
\U(\R,\mu) \;\cong\;\Li(\R,\mu_{AC}).
$$
\end{lemma}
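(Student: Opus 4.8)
The plan is to reduce the entire statement to the single function $\id_\R$ through the one-dimensional Chain Rule and then to force $\d(\id_\R)$ to vanish on the singular set $E$. Throughout I would write $\d$ also for its extension to $\Lip(\R)$ provided by Theorem \ref{thm_extlip}, so that $\d(\id_\R)\in\Li(\R,\mu)$ is meaningful. Since $\mu_{AC}\ll m_1$ and $m_1(E)=0$, the measure $\mu_{AC}$ is concentrated on $\R\setminus E$ and $\mu_S$ on $E$, so that $\mu\lfloor(\R\setminus E)=\mu_{AC}$ and $\mu\lfloor E=\mu_S$. For $n=1$ the Chain Rule (Lemma \ref{lemma_chainrule}) reads $\d f = g_f\cdot\d(\id_\R)$ with $\|g_f\|_{\mu,\infty}\le L(f)$; inspecting its proof (the mollifications satisfy $f_{1/a}'\to f'$ in $L^1_{loc}(\R,m_1)$ and $m_1$-a.e.) together with $\mu_{AC}\ll m_1$ shows that $g_f=f'$ holds $\mu_{AC}$-a.e., that is, $m_1$-a.e.\ on $\R\setminus E$. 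Thus on $\R\setminus E$ we already obtain $\d f = f'\cdot\d(\id_\R)$, and the only point left for \eqref{eq_dim1} is that $\d(\id_\R)=0$ $\mu$-a.e.\ on $E$.

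The crux is this vanishing on $E$, and here I would exploit the Borel (outer) regularity of $m_1$. Since $m_1(E)=0$, choose open sets $U_k\supseteq E$ with $m_1(U_k)<2^{-k}$ and split
\[
\id_\R = \psi_k + \phi_k,\qquad
\psi_k(x):=\int_0^x\chi_{\R\setminus U_k}\,dt,\qquad
\phi_k(x):=\int_0^x\chi_{U_k}\,dt.
\]
Then $\phi_k\in\Lip_b(\R)$ with $L(\phi_k)\le 1$ and $\|\phi_k\|_\infty\le m_1(U_k)<2^{-k}$, so $\phi_k\to 0$ pointwise with uniformly bounded Lipschitz norm; by Lemma \ref{lemma_weakstarlip} and the continuity of $\d$ we get $\d\phi_k\wsto 0$ in $\Li(\R,\mu)$. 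On the other hand $\psi_k'=\chi_{\R\setminus U_k}$ vanishes on $U_k$, so $\psi_k$ is constant on each component interval of $U_k$; a bump-function application of the Leibniz rule (equivalently, the locality property of Theorem \ref{thm_locality}) then yields $\chi_{U_k}\d\psi_k=0$, hence $\chi_E\d\psi_k=0$ because $E\subseteq U_k$. Combining these, $\chi_E\d(\id_\R)=\chi_E\d\phi_k$ for every $k$; since the right-hand side tends weak-$*$ to $0$ while the left-hand side is independent of $k$, we conclude $\chi_E\d(\id_\R)=0$, completing \eqref{eq_dim1}. I expect this step --- transferring the $m_1$-smallness of $U_k$ into the vanishing of $\d(\id_\R)$ on $E$ --- to be the \emph{main obstacle}, the delicate point being the legitimacy of the locality argument for the unbounded function $\psi_k$ (which is why I route it through the extension of Theorem \ref{thm_extlip}).

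For the module statement I would exhibit the isomorphism $\Phi\colon\U(\R,\mu)\to\Li(\R,\mu_{AC})$, $\Phi(\d):=\d(\id_\R)$, which is well-defined by \eqref{eq_dim1} (as $\d(\id_\R)$ vanishes $\mu_S$-a.e.\ it determines a class in $\Li(\R,\mu_{AC})$) and is $\Li(\R,\mu)$-linear, since $(\lambda\d)(\id_\R)=\lambda\,\d(\id_\R)$ and the scalar action on $\Li(\R,\mu_{AC})$ is by restriction. Injectivity is immediate from Corollary \ref{cor_zerocoord}: if $\d(\id_\R)=0$ $\mu_{AC}$-a.e., then together with its vanishing on $E$ we get $\d(\id_\R)=0$ $\mu$-a.e., whence $\d=0$. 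For surjectivity I would check that $\d_0 f:=\chi_{\R\setminus E}\,f'$ defines a derivation in $\U(\R,\mu)$ --- the Leibniz rule is clear, and continuity is verified as in Example \ref{eg_eucl}: writing $d\mu_{AC}=w\,dm_1$ with $w\in L^1_{loc}$, a bounded sequence $f_n\to f$ in $\Lip_b(\R)$ converges locally uniformly, so $f_n'\wsto f'$ in $L^\infty(\R,m_1)$, and testing against $\chi_{\R\setminus E}hw\in L^1(\R,m_1)$ for $h\in L^1(\R,\mu)$ gives $\d_0 f_n\wsto\d_0 f$. Since $\d_0(\id_\R)=\chi_{\R\setminus E}$, any $w\in\Li(\R,\mu_{AC})$ is realized as $\Phi(\tilde w\cdot\d_0)=w$, where $\tilde w\in\Li(\R,\mu)$ extends $w$ by $0$ on $E$. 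This provides the inverse of $\Phi$ and finishes the proof.
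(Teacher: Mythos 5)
Your proof is correct and is essentially the paper's own argument: your $\psi_k(x)=\int_0^x\chi_{\R\setminus U_k}\,dm_1$ is precisely the paper's approximating function $\varphi_j$, and your splitting $\id_\R=\psi_k+\phi_k$ with $\d\phi_k\wsto 0$ and $\chi_{U_k}\bar\d\psi_k=0$ by locality is just an algebraic rearrangement of the paper's limit argument ($\d\varphi_j\wsto\d(\id_\R)$ together with $\chi_E\d\varphi_j=0$), while the module isomorphism via $\d\mapsto\d(\id_\R)$ and $\lambda\mapsto\lambda\cdot(d/dx)$ is identical. The only differences are organizational and slightly to your credit: the paper first reduces to $E\subset[0,1]$ via Theorem \ref{thm_locality} so that all functions remain bounded, whereas you handle the unbounded $\psi_k$ through the extension of Theorem \ref{thm_extlip}, and you explicitly verify both that $g_f=f'$ holds $\mu_{AC}$-a.e.\ and that $f\mapsto\chi_{\R\setminus E}f'$ is weak-$*$ continuous (so that $d/dx$ is genuinely a derivation with respect to $\mu$), two points the paper leaves implicit.
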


\begin{proof}
Let $\d \in \U(\R,\mu)$ and $f \in \Lip_b(\R)$ be arbitrary.  For subsets of $\R \setminus E$ we have $\mu = \mu_{AC}$, %As $\mu_{AC}$ is absolutely continuous to $m_1$, it follows from 
so by Rademacher's theorem, $f$ is differentiable $\mu$-a.e.\ on $\R \setminus E$.  The Chain Rule for derivations then implies that
$$
\d f(x) \;=\; \d(\id_\R)(x) \cdot f'(x)
$$
for $\mu_{AC}$-a.e.\ $x \in \R$ and hence for $\mu$-a.e.\ $x \in \R \setminus E$.

To show $\chi_E \cdot \d f = 0$, assume by locality (Theorem \ref{thm_locality}) that $E$ is bounded. 
In particular, let $E \subset [0,1]$, so $\chi_E \d \in \U([0,1],\mu)$.

Since $m_1(E) = 0$, %$E$ is a Lebesgue null set, 
for each $j \in \N$ there is a %bounded 
open set $O_j$ %in $\R$ 
so that $E \subset O_j$ and $m_1(O_j) < 2^{-j}$.  We next define %a sequence of Lipschitz 
functions $\varphi_j : [0,1] \to \R$ by the formula
$$
\varphi_j(x) \;:=\; \int_0^x (1 -\chi_{O_j}) \,dm_1.
$$
Clearly $\|\varphi_j\|_{\Lip} \leq 1$ holds, for each $j$.  Estimating further, we see that
$$
0 \;\leq\;
x - \varphi_j(x) \;=\;
\int_0^x \chi_{O_j} \,dm_1 \;\leq\;
m_1(O_j) \;\leq\; 2^{-j},
$$
and hence %from which we conclude that 
$\{\varphi_j\}_{j=1}^\infty$ converges pointwise to the identity on $\R$.  By Lemma \ref{lemma_weakstarlip}, this is equivalent to weak-$*$ convergence in $\Lip_b([0,1])$, and by %the
 continuity, %property of derivations, 
we obtain %the convergence 
$\d\varphi_j \wsto \d(\id_\R)$ in $\Li([0,1],\mu)$.

However, if $O_j'$ is a connected component of $O_j$, then by construction, $\varphi_j|O_j'$ is constant for each $j$.  The locality property implies that $\d\varphi_j(x) = 0$ holds for $\mu$-a.e.\ $x \in O_j' \cap [0,1]$, for each $j$, and hence %therefore that 
$\chi_E \cdot \d\varphi_j = 0$.  By continuity we obtain $\chi_E \cdot \d(\id_\R)= 0$, and by the Chain Rule we further obtain $\chi_E \cdot \d f = 0$.  This proves formula \eqref{eq_dim1}.

Consider maps $S : \U(\R,\mu) \to  \Li(\R,\mu_{AC})$ and $T : \Li(\R,\mu_{AC}) \to \U(\R,\mu)$ given by $S(\d) := \d(\id_\R)$ and $T(\lambda) := \lambda \cdot (d/dx)$.
Clearly, $S$ and $T$ are homomorphisms of $\Li(\R,\mu)$-modules.  Using formula \eqref{eq_dim1} and the previous estimates, 
$$
(T \circ S)(\d) \;=\;
T\big(\chi_{\R \setminus E} \cdot \d(\id_\R)\big) \;=\;
\chi_{\R \setminus E} \cdot \d(\id_\R) \cdot (d/dx) \;=\; \d
$$
and hence $S \circ T = \id_{\U(\R,\mu)}$.  A similar computation gives $S \circ T = \id_{\Li(\R,\mu)}$.
\end{proof}

\subsection{The General Case} \label{sect_derivsGMT}

We now introduce two types of sets in $\R^n$.

\begin{defn}
Let $k \in \N$
A subset $E$ in $\R^n$ is \emph{$k$-rectifiable} if, for some $\lambda \in (1,\infty)$ it admits a
$\H^k$-measurable decomposition of the form
\begin{equation} \label{eq_defnrect}
E \;=\; N \cup \bigcup_{i=1}^\infty f_i(A_i),
\end{equation}
where $\H^k(N) = 0$ %$N$ is a $\H^k$-null set in $\R^n$ 
and where, for each $i \in \N$, $A_i$ is a compact subset of $\R^k$ with $m_k(A_i) > 0$ and $f_i : A_i \to \R^n$ is a $\lambda$-bi-Lipschitz embedding.

A subset $F$ in $\R^n$ is \emph{purely $k$-unrectifiable} if $\H^k(E \cap F) = 0$ holds for all $k$-rectifiable sets $E$ in $\R^n$.
\end{defn}

\begin{rmk}
This definition differs substantially from the standard definition of $k$-rectifiability; see \cite[Defn 15.3]{Mattila} or \cite[Defn 3.2.14(1)]{Federer}.  However, by \cite[Lem 3.2.18]{Federer} these definitions are equivalent.
\end{rmk}

%Below, we see that 
Indeed, each $k$-set is a union of sets of the above types \cite[Thm 15.6]{Mattila}.

\begin{lemma} \label{lemma_rectdecomp}
Let $n \in \N$ and let $k$ be an integer in $[0,n]$.  If $A$ is a $k$-set in $\R^n$, then there is a $\H^k$-measurable decomposition $A = E \cup F$, where $E$ is $k$-rectifiable and $F$ is purely $k$-unrectifiable.
\end{lemma}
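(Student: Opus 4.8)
The plan is to prove this by a standard exhaustion (maximality) argument, of the type underlying \cite[Thm 15.6]{Mattila}, first reducing to the case of finite measure. Since $A$ has $\sigma$-finite $\H^k$-measure, I would write it as a countable disjoint union $A = \bigcup_{j} A_j$ of $\H^k$-measurable sets with $\H^k(A_j) < \infty$, produce a decomposition $A_j = E_j \cup F_j$ for each piece by the finite-measure case treated next, and then reassemble by setting $E := \bigcup_j E_j$ and $F := \bigcup_j F_j$. The reassembly works because $k$-rectifiability is closed under countable unions, so $E$ is $k$-rectifiable; while for any $k$-rectifiable set $G$ one has $\H^k(G \cap F) \le \sum_j \H^k(G \cap F_j) = 0$ by pure unrectifiability of each $F_j$, so $F$ is purely $k$-unrectifiable.

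For the finite-measure case, where $\H^k(A) < \infty$, I would set
$$
s \;:=\; \sup\{ \H^k(E) \,:\, E \subset A, \ E \text{ is } \H^k\text{-measurable and } k\text{-rectifiable} \},
$$
which is finite since $s \le \H^k(A)$. Choosing $k$-rectifiable sets $E_j \subset A$ with $\H^k(E_j) > s - 1/j$ and putting $E := \bigcup_{j=1}^\infty E_j$, the set $E$ is again $k$-rectifiable, satisfies $\H^k(E) \le s$, and satisfies $\H^k(E) \ge \H^k(E_j) > s - 1/j$ for all $j$; hence $\H^k(E) = s$, so the supremum is attained by a $k$-rectifiable subset of $A$. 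I then define $F := A \setminus E$ and argue by contradiction: if $F$ were not purely $k$-unrectifiable, there would be a $k$-rectifiable set $G$ with $\H^k(G \cap F) > 0$. Replacing $G$ by $G \cap F$, a $\H^k$-measurable subset of a $k$-rectifiable set and hence itself $k$-rectifiable, I may assume $G \subset F$, so $G$ and $E$ are disjoint. Then $E \cup G \subset A$ is $k$-rectifiable with $\H^k(E \cup G) = s + \H^k(G) > s$, contradicting the maximality of $s$. Thus $F$ is purely $k$-unrectifiable and $A = E \cup F$ is the desired decomposition.

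The two facts I would need to justify carefully --- and which I expect to be the only real obstacle --- are that a countable union of $k$-rectifiable sets is again $k$-rectifiable, and that a $\H^k$-measurable subset of a $k$-rectifiable set is $k$-rectifiable. The subtlety is that the definition in \eqref{eq_defnrect} fixes a single bi-Lipschitz constant $\lambda$, so a naive union of the defining families might force $\sup_j \lambda_j = \infty$. I would sidestep this by passing to the standard formulation of $k$-rectifiability, namely covering up to an $\H^k$-null set by countably many Lipschitz images of subsets of $\R^k$, which is manifestly stable under countable unions and under passage to measurable subsets; its equivalence with \eqref{eq_defnrect} is exactly the content of \cite[Lem 3.2.18]{Federer} recorded in the remark above. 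With that equivalence in hand, both stability facts are immediate, and the exhaustion argument goes through.
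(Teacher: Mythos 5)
The paper does not actually prove this lemma: it is stated as a known fact with the citation \cite[Thm 15.6]{Mattila}, and your exhaustion argument is precisely the standard proof of that cited result, so in substance you have reconstructed the intended argument. Your proof is correct: the reduction to the finite-measure case, the maximality argument, and your handling of the definitional subtlety (passing to the standard notion of rectifiability via \cite[Lem 3.2.18]{Federer} to obtain closure under countable unions and under measurable subsets, since the paper's definition fixes a single bi-Lipschitz constant $\lambda$) are all sound. One point you assert without justification is the $\H^k$-measurability of $G \cap F$; this does hold, but only because under the paper's definition every $k$-rectifiable set is automatically $\H^k$-measurable (it is a countable union of compact sets $f_i(A_i)$ together with an $\H^k$-null set) and $F = A \setminus E$ is measurable --- under the standard Lipschitz-image definition alone, rectifiable sets need not be measurable, so that one line deserves to be spelled out.
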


To prove Theorem \ref{thm_1sets}, we will use an alternative characterization of purely $k$-unrectifiable subsets in $\R^n$ \cite[Thm 18.1]{Mattila}.  Below, $\G(n;k)$ denotes the space of $k$-dimensional subspaces of $\R^n$ and ``almost everywhere'' refers to the Haar measure on $\G(n;k)$.  When $k = 1$, this measure %the Haar measure on $\G(n;1)$ 
is equivalent to (normalized) surface measure on the half-sphere $\{ x \in \mathbb{S}^{n-1}: x_n > 0\}$.

\begin{thm}[Besicovitch-Federer] \label{thm_projections}
For $0 \leq k \leq n$, let $F$ be a $k$-set in $\R^n$.  Then $F$ is purely $k$-unrectifiable if and only if 
for a.e.\ $V \in \G(n;k)$, the image $\proj_V(F)$ has $\H^k$-measure zero.
\end{thm}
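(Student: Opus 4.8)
\emph{Proof plan.} The plan is to prove the two implications separately after a preliminary reduction, with the forward implication (pure unrectifiability $\Rightarrow$ almost all projections are null) carrying the real content; this is the classical theorem of Besicovitch for $k=1$, $n=2$ and of Federer in general. Since $F$ is a $k$-set, its $\H^k$-measure is $\sigma$-finite, so $F$ splits into countably many pieces of finite measure. Both pure $k$-unrectifiability and the vanishing of $\H^k(\proj_V F)$ for $\gamma$-almost every $V$ are stable under countable unions, so I would reduce at once to the case $\H^k(F) < \infty$. Throughout, write $\gamma$ for the Haar measure on $\G(n;k)$.

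For the reverse implication I would argue by contrapositive. Suppose $F$ is not purely $k$-unrectifiable. By Lemma \ref{lemma_rectdecomp} write $F = E \cup F'$ with $E$ being $k$-rectifiable and $F'$ purely $k$-unrectifiable; then $\H^k(E) > 0$. Discarding an $\H^k$-null set, the rectifiable piece $E$ carries an approximate tangent $k$-plane $T_x$ at $\H^k$-almost every $x$ (see \cite{Mattila}). For a fixed $V \in \G(n;k)$ the area formula expresses $\H^k(\proj_V E)$ through the Jacobian of $\proj_V$ along these tangent planes, and the linear map $\proj_V|_{T_x} : T_x \to V$ fails to be injective only when $T_x \cap V^\perp \neq \{0\}$, a $\gamma$-null condition on $V$ for each fixed $T_x$. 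A Fubini argument over the pairs $(x,V)$ then yields $\H^k(\proj_V E) > 0$, and hence $\H^k(\proj_V F) > 0$, for $\gamma$-almost every $V$. This contradicts the assumption that almost all projections are null.

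The forward implication is where the difficulty lies. Assuming $F$ is purely $k$-unrectifiable with $\H^k(F) < \infty$, I must show $\H^k(\proj_V F) = 0$ for $\gamma$-almost every $V$; equivalently, it suffices to prove that the integralgeometric (Favard) content
$$
\int_{\G(n;k)} \H^k(\proj_V F)\,d\gamma(V)
$$
vanishes. The strategy is to exploit the structure theory of unrectifiable sets: at $\H^k$-almost every point of $F$ the blow-ups fail to concentrate on any single $k$-plane, so at all small scales $F$ spreads into directions transverse to every candidate tangent plane. One then runs a density and covering argument --- Besicovitch's original net-of-cones construction in the plane, and Federer's higher-dimensional elaboration --- to show that this infinitesimal spreading forces the projections along distinct directions to overlap so heavily that the averaged projected measure can be driven below any prescribed threshold, whence the displayed integral is zero.

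The main obstacle is precisely this last step: passing from the local non-flatness of a purely unrectifiable set to the global conclusion that almost all of its projections are null. This is the genuinely deep classical content of the Besicovitch--Federer theorem, and since it is orthogonal to the derivation-theoretic methods developed here, I would ultimately invoke it in the packaged form \cite[Thm 18.1]{Mattila} rather than reproduce the full geometric argument.
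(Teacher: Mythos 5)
The paper does not prove this statement at all: it is quoted as the classical Besicovitch--Federer projection theorem, with the citation \cite[Thm 18.1]{Mattila} serving as its entire justification, and your proposal --- after a sound but inessential reduction to finite measure and a correct sketch of the easy contrapositive direction via tangent planes and the area formula --- ultimately invokes exactly that same reference for the deep forward implication. So your approach coincides with the paper's: both treat the result as packaged classical input rather than something to be reproved.
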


In the remainder of the section, we assume $k = 1$.  The proof of Theorem \ref{thm_1sets} is split into two cases.

\begin{lemma} \label{lemma_pure1unrect}
Let $\mu$ be a Radon measure on $\R^n$.  If $\mu$ is concentrated on a 
purely $1$-unrectifiable set of Hausdorff dimension (at most) one, then 
$\U(\R^n,\mu) = 0$.
\end{lemma}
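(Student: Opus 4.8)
The plan is to prove that every coordinate derivative $\d x_j$ vanishes $\mu$-a.e., whence $\d = 0$ by Corollary \ref{cor_zerocoord}; since $\d \in \U(\R^n,\mu)$ is arbitrary this gives $\U(\R^n,\mu) = 0$. The driving idea is to push $\d$ forward under orthogonal projections onto lines and to invoke the one-dimensional classification of Lemma \ref{lemma_dim1}. By the locality property (Theorem \ref{thm_locality}) and an exhaustion of $\R^n$ by balls, it suffices to establish $\chi_B\d = 0$ for each ball $B$; replacing $\mu$ by $\mu\lfloor B$, I may therefore assume $\mu$ is finite and the purely $1$-unrectifiable set $F$ carrying $\mu$ is bounded. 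In particular $F$ is a $1$-set, so Theorem \ref{thm_projections} (Besicovitch--Federer) applies and yields a full-measure set $G \subseteq \G(n;1)$ of lines with $\H^1(\proj_V(F)) = 0$ for every $V \in G$.

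Fix $V = \R v \in G$ with $v$ a unit vector, and set $\pi := \proj_V$ and $\nu := \pi_\#\mu$. Then $\nu$ is a finite Radon measure on $\R \cong V$ concentrated on the Lebesgue-null set $\proj_V(F)$, so its absolutely continuous part vanishes and Lemma \ref{lemma_dim1} gives $\U(\R,\nu) = 0$.

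The heart of the argument is to convert this vanishing into a pointwise statement on $\R^n$. For an arbitrary $\mu$-measurable set $A$, the fact that $\chi_A \in \Li(\R^n,\mu)$ shows that $\chi_A\d$ is again a derivation, so its pushforward satisfies $\pi_\#(\chi_A\d) \in \U(\R,\nu) = 0$. Choose $f \in \Lip_b(\R)$ equal to the identity on an interval containing $\proj_V(F)$; since $F$ is bounded, $f\circ\pi$ coincides with $x \mapsto \langle x,v\rangle$ on a neighborhood of $F$, so by locality and the Chain Rule (Lemma \ref{lemma_chainrule}) one has $\d(f\circ\pi) = \sum_j v_j\,\d x_j$ $\mu$-a.e.; denote this function by $\d_v$. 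Feeding $f$ and $h \equiv 1$ into the pushforward identity \eqref{eq_pushfwd} then gives
$$
\int_A \d_v \,d\mu \;=\; \int_{\R^n} (\chi_A\d)(f\circ\pi)\,d\mu \;=\; \int_\R \pi_\#(\chi_A\d)(f)\,d\nu \;=\; 0.
$$
As $A$ ranges over all $\mu$-measurable sets, this forces $\d_v = \sum_j v_j\,\d x_j = 0$ $\mu$-a.e., for every $V = \R v \in G$.

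It remains to assemble these directional identities. Writing $g := (\d x_1,\ldots,\d x_n)$, the previous step shows $\langle g(x), v\rangle = 0$ for $\mu$-a.e.\ $x$ and for a.e.\ unit vector $v$. A Fubini argument on $\R^n \times \mathbb{S}^{n-1}$ then shows that for $\mu$-a.e.\ $x$ the vector $g(x)$ is orthogonal to $v$ for a.e.\ $v$, which is possible only when $g(x) = 0$; hence $\d x_j = 0$ for every $j$, and $\d = 0$ as desired. I expect the main obstacle to be the upgrade carried out in the third paragraph: the vanishing $\pi_\#\d = 0$ by itself only asserts that the $v$-marginal of the signed measure $\d_v\,\mu$ is zero, which for a single direction is far weaker than $\d_v = 0$. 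Pushing forward every localization $\chi_A\d$, rather than $\d$ alone, is precisely what turns this marginal condition into genuine $\mu$-a.e.\ vanishing, and arranging the boundedness and finiteness needed to justify the pushforward identity (via the reduction in the first paragraph) is the accompanying technical point.
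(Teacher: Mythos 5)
Your proposal is correct and takes essentially the same route as the paper's proof: Besicovitch--Federer projections onto lines, the vanishing of $\U(\R,\pi_\#\mu)$ from Lemma \ref{lemma_dim1}, and the Chain Rule (Corollary \ref{cor_zerocoord}) to conclude $\d = 0$. The only differences are implementation details --- the paper fixes $n$ spanning directions and varies the $L^1$ test function $h$, while you use a.e.\ direction plus a Fubini argument and instead vary the localization $\chi_A\d$ with $h \equiv 1$; your version of that step is, if anything, the cleaner way to justify the pointwise vanishing that the paper's displayed strict inequality asserts without elaboration.
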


\begin{proof}%[Proof of Lemma \ref{lemma_pure1unrect}]
By Theorem \ref{thm_projections}, $F$ satisfies $\H^1(\proj_V(F)) = 0$ for a.e.\ $V \in \G(n;1)$.  In particular, there exist subspaces $\{V_i\}_{i=1}^n \subset \G(n;1)$ whose union spans $\R^n$ and so that $\H^1(\proj_{V_i}(F)) = 0$ holds, for each $1 \leq i \leq n$.  

Put $p^i := \proj_{V_i}$. Since $\H^1(p^i(F)) = 0$, we observe that $p^i_\#\mu$ is singular to $\H^1 \lfloor V_i$. By identifying $V_i$ with $\R$, we further observe that $\U(V_i, p^i_\#\mu) = 0$.

We claim that $\d p^i = 0$ holds for all $\d \in \U(\R^n,\mu)$; if not, the set 
$$
F_i \;:=\; \{ x \in F : \d p^i(x) \neq 0\}
$$
has positive $\mu$-measure.  For each bounded domain $\Omega$ in $\R^n$, formula \eqref{eq_pushfwd} implies
$$
0 \;<\;
\int_{\R^n} \chi_{\Omega \cap F_i} \cdot \d p^i \, d\mu \;=\;
\int_{V_i} \chi_{p^i(\Omega \cap F_i)} \cdot p^i_\#\d(\id_\R) \,d(p^i_\#\mu).
$$
However, the rightmost term is zero because $p^i_\#\d \in \U(V_i,p^i_\#\mu)$; therefore $p^i_\#\d$ is zero.  Since $\Omega$ was arbitrary, the claim follows.

Lastly, the linear functions $\{p^i\}_{i=1}^n$ generate the coordinate functions $\{x_i\}_{i=1}^n$.  This implies that $\d x_i = 0$ holds $\mu$-a.e.\ for each $i$, so by the Chain Rule for derivations, we conclude that $\d = 0$.
\end{proof}

In the case of $1$-rectifiable sets, the next lemma extends a result of Weaver \cite[Thm 38]{WeaverED} to arbitrary Radon measures on $\R^n$.

\begin{lemma} \label{lemma_1rect}
Suppose that $\mu$ is a Radon measure on $\R^n$ that is concentrated on a $1$-rectifiable set $E$.  If $\U(\R^n,\mu)$ is nontrivial, then $\U(\R^n,\mu)$ has rank-$1$.
\end{lemma}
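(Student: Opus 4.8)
The plan is to reduce the statement to the one-dimensional case, Lemma \ref{lemma_dim1}, by exploiting the bi-Lipschitz parametrization that rectifiability provides, and then to assemble the resulting local rank bounds into a global one via the locality property, Theorem \ref{thm_locality}. Since $\U(\R^n,\mu)$ is assumed nontrivial, its rank is at least one, so the entire content is to show that the rank is \emph{at most} one, i.e.\ that any two derivations $\d, \d' \in \U(\R^n,\mu)$ are linearly dependent.

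First I would fix a decomposition $E = N \cup \bigcup_i f_i(A_i)$ as in \eqref{eq_defnrect}, with $\H^1(N) = 0$ and each $f_i : A_i \to \R^n$ a $\lambda$-bi-Lipschitz embedding of a compact set $A_i \subset \R$. Disjointifying, I set $Y_0 := E \setminus \bigcup_{i \geq 1} f_i(A_i) \subseteq N$ and $Y_i := f_i(A_i) \setminus \bigcup_{j < i} f_j(A_j)$ for $i \geq 1$. Because the $f_i$ are injective with closed images, each $Y_i$ is Borel and $Y_i = f_i(B_i)$ for a Borel set $B_i \subseteq A_i$, and $\{Y_i\}_{i \geq 0}$ is a genuinely disjoint $\mu$-measurable decomposition of $\R^n$. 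The set $Y_0$ satisfies $\H^1(Y_0) = 0$, hence is purely $1$-unrectifiable of Hausdorff dimension at most one, so Lemma \ref{lemma_pure1unrect} gives $\chi_{Y_0}\d = 0$ for every $\d \in \U(\R^n,\mu)$.

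The heart of the argument is to show that each module $\U(Y_i,\mu)$ with $i \geq 1$ has rank at most one. Here I would transfer to $\R$ along $f_i$: the restriction $f_i : B_i \to Y_i$ is a bi-Lipschitz bijection, so $\nu_i := (f_i^{-1})_\#(\mu \lfloor Y_i)$ is a finite Borel measure on $\R$ concentrated on $B_i$, and $(f_i)_\#\nu_i = \mu \lfloor Y_i$. Applying the bi-Lipschitz functoriality corollary that follows Lemma \ref{lemma_pushfwd} together with locality then yields, as modules, $\chi_{Y_i}\U(\R^n,\mu) \cong \U(Y_i,\mu\lfloor Y_i) \cong \U(\R,\nu_i)$. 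By Lemma \ref{lemma_dim1} the latter is isomorphic to $\Li(\R,(\nu_i)_{AC})$, which has rank at most one (and is trivial precisely when $\nu_i$ is singular to $m_1$, consistent with $\mu\lfloor Y_i$ carrying no derivations). To finish by gluing, given $\d,\d'$ the rank bound on each $Y_i$ ($i\geq 1$) furnishes scalars $a_i,b_i \in \Li(Y_i,\mu)$, not both zero and normalized so that $\|a_i\|_\infty \vee \|b_i\|_\infty \leq 1$, with $a_i\chi_{Y_i}\d + b_i\chi_{Y_i}\d' = 0$; then $a := \sum_{i\geq 1}\chi_{Y_i}a_i$ and $b := \sum_{i\geq 1}\chi_{Y_i}b_i$ lie in $\Li(\R^n,\mu)$ and satisfy $a\d + b\d' = 0$, using $\chi_{Y_0}\d = \chi_{Y_0}\d' = 0$. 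Nontriviality forces some $Y_i$ with $i \geq 1$ to have positive measure (otherwise $\mu$ is concentrated on $Y_0$ and $\U(\R^n,\mu) = 0$), so $(a,b)$ is a nonzero pair and $\{\d,\d'\}$ is dependent; hence the rank is exactly one.

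I expect the main obstacle to be the per-piece transfer rather than the gluing. In particular one must check that $\nu_i$ is a measure to which Lemma \ref{lemma_dim1} genuinely applies — it is a finite Borel measure but, being concentrated on $B_i$, need not be Radon in the strict sense of assigning positive mass to every ball, so one verifies that the Lebesgue-decomposition argument behind Lemma \ref{lemma_dim1} survives (it uses only $\sigma$-finiteness and the splitting $\nu_i = (\nu_i)_{AC} + (\nu_i)_S$) — and that the correspondence produced by the functoriality corollary is an honest module isomorphism, not merely a rank inequality. By contrast, once the local scalars are normalized into the unit ball of $\Li$, the assembly in the last step is routine.
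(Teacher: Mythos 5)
Your proof is correct and rests on the same pillars as the paper's: the rectifiable decomposition, transfer to $\R$ along the bi-Lipschitz charts via pushforward derivations (Lemma \ref{lemma_pushfwd} and the functoriality corollary after it), the one-dimensional Lemma \ref{lemma_dim1}, Lemma \ref{lemma_pure1unrect} for the $\H^1$-null part, and locality. Where you genuinely diverge is the assembly step: the paper glues \emph{derivations}, not scalars. On each piece $E_i = f_i(A_i)$ it shows that $\d_i := (f_i)_\#(\chi_{A_i'}\,d/dx)$ generates $\U(E_i,\mu)$, records the uniform bound $\|\d_i\|_{\rm op} \leq 2$ from \eqref{eq_pushfwdbd}, invokes the gluing Lemma \ref{lemma_gluing} to form a single global derivation $\d_0 = \sum_i \chi_{E_i}\d_i \in \U(\R^n,\mu)$, and then proves $\d = \lambda \cdot \d_0$ with $\lambda = \sum_i \chi_{E_i}\,\big((f_i^{-1})_\#\d\big)(\id_\R)$ lying in $\Li(\R^n,\mu)$ by the operator-norm estimates; that is, the module is cyclic with an explicit generator. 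Your route --- per-piece linear dependence via $\chi_{Y_i}\U(\R^n,\mu) \cong \U(\R,\nu_i) \cong \Li(\R,(\nu_i)_{AC})$, followed by sup-norm-normalized gluing of the scalar pairs $(a_i,b_i)$ --- is more elementary: it needs neither Lemma \ref{lemma_gluing} nor any operator-norm bookkeeping, and it fully suffices for the lemma as stated. What it buys less of is the generator itself: the paper reuses $\d_0$ immediately afterwards (the proof of Theorem \ref{thm_1sets} begins by taking ``the generator of $\U(\R^n,\mu \lfloor E)$,'' and the remark following the lemma identifies it with $f \mapsto \chi_E \cdot D_{ap}f$), so adopting your proof would require rephrasing that later argument in terms of the rank bound alone, which is possible since pairwise dependence is all that is ultimately used there. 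Two minor notes: your worry about $\nu_i$ failing the ball-positivity clause in the paper's definition of ``Radon'' applies equally to the paper's own application of Lemma \ref{lemma_dim1} to $f^{-1}_\#\mu$, and your resolution (finite, compactly supported Borel measure on $\R$, to which the Lebesgue decomposition argument applies verbatim) is the right one; and in your gluing, the identity $a\d + b\d' = 0$ already holds simply because $a$ and $b$ vanish on $Y_0$ --- the vanishing $\chi_{Y_0}\d = \chi_{Y_0}\d' = 0$ is needed only, as you in fact use it, to guarantee that some $Y_i$ with $i \geq 1$ has positive measure, so that $(a,b) \neq (0,0)$.
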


One can further show that the generator of $\U(\R^n,\mu)$ is given by
$f \mapsto \chi_E \cdot D_{ap}f$,
where  $D_{ap}f$ is the \emph{approximate derivative} of the restriction 
$f|E$.  For more about approximate derivatives, see \cite[Sect 3.1.22]{Federer}.

\begin{proof}%[Proof of Lemma \ref{lemma_1rect}]
Let $E$ be a $1$-rectifiable set on which $\mu$ is concentrated.  As a first case, assume that $E = f(A)$, where $A \subset \R$ satisfies $m_1(A) > 0$ and where $f : A \to \R^n$ is a bi-Lipschitz embedding.  By Lemma \ref{lemma_pushfwd}, for each $\d \in \U(\R^n,\mu)$ there is a unique element $f^{-1}_\#\d$ in $\U(A, \nu)$, where $\nu := f^{-1}_\#\mu$.

If $\nu_{AC} = 0$, then $f^{-1}_\#\d = 0$ follows from Lemma \ref{lemma_dim1}.  Let $h \in L^1(\R^n,\mu)$ and $\varphi \in \Lip_b(\R^n)$ be arbitrary.  By formula \eqref{eq_pushfwd} and the previous identity,
\begin{eqnarray*}
\int_{\R^n} h \cdot \d \varphi \,d\mu &=&
\int_A (h \circ f^{-1}) \cdot f^{-1}_\#\d(\varphi \circ f^{-1}) \,d\nu \\ &=&
\int_A (h \circ f^{-1}) \cdot \lambda \cdot \Big(\chi_{A'} \frac{d}{dx}\Big)(\varphi \circ f^{-1}) \,d\nu \\ &=&
\int_{\R^n} h \cdot (\lambda \circ f) \cdot f_\#\Big(\chi_{A'} \frac{d}{dx}\Big)\varphi \,d\mu.
\end{eqnarray*}
so $f_\#(\chi_{A'} d / dx)$ generates $\U(\R^n,\mu)$.

For the general case, let $E = \bigcup_{i=1}^\infty f_i(A_i)$, where each $A_i$ is compact and each $f_i : A_i \to \R^n$ is $2$-biLipschitz.  Indeed, if $N$ is an $\H^1$-null set in $\R^n$, then $N$ is purely $1$-unrectifiable and by Lemma \ref{lemma_pure1unrect}, $\U(N,\mu) = 0$.

Put $E_i := f_i(A_i)$ and $\mu_i := \mu \lfloor E_i$.  By the previous case, 
$\d_i := (f_i)_\#(\chi_{A_i'}d/dx)$
generates $\U(E_i,\mu)$, where each $A_i'$ is a subset of $A_i$ on which $(f_i^{-1})_\#\mu_i$ is concentrated.  Moreover, from estimate \eqref{eq_pushfwdbd}, we have
$$
\|\d_i\|_{\rm op} \;\leq\; (1 \vee L(f_i)) \cdot \|\chi_{A_i'} d/dx\|_{\rm op} \;\leq\; 2.
$$
By Lemma \ref{lemma_gluing}, the map $\d_0 := \sum_{i=1}^\infty \chi_{E_i}\d_i$ is a well-defined element of $\U(\R^n,\mu)$.  For each $i \in \N$ and each $\d \in \U(\R^n,\mu)$, put
$$
\lambda_i := ((f_i^{-1})_\#\d)(\id_\R) \; \textrm{ and } \;
\lambda := \sum_{i=1}^\infty \chi_{E_i}\lambda_i.
$$
By an analogous argument as above, we obtain $\d = \lambda \cdot \d_0$.  In addition, for each $i \in \N$, the set $E_i$ is bounded and hence
\begin{eqnarray*}
\|\chi_{E_i} \lambda_i\|_{\mu,\,\infty} &\leq&
\|(f_i^{-1})_\#\d\|_{\rm op} \cdot \|\id_{E_i}\|_{\Lip} \\ &\leq&
\big( 1 \vee L(f_i^{-1})\big) \cdot \|\d\|_{\rm op} \cdot (1 \vee \diam(E_i)) \;\leq\;
2 \cdot \|\d\|_{\rm op}.
\end{eqnarray*}
By Part (1) of Lemma \ref{lemma_ctyderiv}, $\d$ is a bounded operator, so $\lambda \in \Li(\R^n,\mu)$.
\end{proof}

\begin{proof}[Proof of Theorem \ref{thm_1sets}]
%To begin, l
Let $A$ be a $1$-set on which $\mu$ is concentrated.  By Lemma \ref{lemma_rectdecomp}, we have the $\H^1$-decomposition $A = E \cup F$, where $E$ is $1$-rectifiable and $F$ is purely $1$-unrectifiable.

If $\mu(F) > 0$, then by the locality property and by Lemma \ref{lemma_pure1unrect}, the set $\{\chi_F\d_1, \chi_F\d_2\}$ is linearly dependent in $\U(\R^n,\mu)$.  It follows from Lemma \ref{lemma_lindepsubset} that $\{\d_1,\d_2\}$ is also linearly dependent in $\U(\R^n,\mu)$.

If instead $\mu(F) = 0$, then $\mu$ is concentrated on $E$ and hence $\mu = \mu \lfloor E$.
%This implies that $\chi_E \d_i = \d_i$ for $i = 1, 2$.
Let $\d_0$ be the generator of $\U(\R^n,\mu \lfloor E)$. 
%, as defined in the proof of Lemma \ref{lemma_1rect}.
For each $i = 1,2$ there is a nonzero function $\lambda_i \in \Li(\R^n,\mu)$ so that 
$\chi_E \d_i = \d_i = \lambda_i \d_0$.  
We now put
$$
\Lambda_1(x) \;:=\;
\chi_{\spt(\lambda_i)} \cdot \left[1 \wedge \frac{\lambda_2(x)}{\lambda_1(x)} \right] 
\; \textrm{ and } \; 
\Lambda_2(x) \;:=\; 
\chi_{\spt(\lambda_2)} \cdot \left[1 \wedge \frac{\lambda_1(x)}{\lambda_2(x)}\right].
$$
By construction, $\Lambda_1 \d_1 - \Lambda_2 \d_2 = 0$.  Neither $\Lambda_1$ nor $\Lambda_2$ is zero, otherwise one of $\lambda_1$ and $\lambda_2$ is zero, which is a contradiction.
\end{proof}

%\vfill
%\pagebreak
%=========================================================================
\section{Derivations on $2$-Dimensional Sets} \label{sect_2dim}

Let $\mu$ be a Radon measure on $\R$.  As a consequence of Theorem \ref{lemma_dim1}, if $\mu$ is singular to Lebesgue measure, then $\U(\R,\mu)$ has rank $0$.  A similar statement holds true for Radon measures on $\R^2$.%, and it implies the $k=2$ case of Theorem \ref{thm_rigiditylowdim}.

\begin{thm} \label{thm_2dimLI}
Let $\mu$ be a Radon measure on $\R^2$.  If $\mu$ is singular to Lebesgue measure, then the module $\U(\R^2,\mu)$ has rank $1$.
\end{thm}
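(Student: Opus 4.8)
The plan is to prove the contrapositive-free statement directly: any two derivations $\d_1, \d_2 \in \U(\R^2,\mu)$ are linearly dependent, which forces $\U(\R^2,\mu)$ to have rank at most $1$. The first step is to record that, by the Chain Rule (Lemma \ref{lemma_chainrule}) together with Corollary \ref{cor_zerocoord}, each derivation is completely encoded by its coordinate data $\vec\d := (\d x_1, \d x_2) \in \Li(\R^2,\mu;\R^2)$. Indeed, for scalars $\lambda_1, \lambda_2 \in \Li(\R^2,\mu)$ the combination $\lambda_1 \d_1 + \lambda_2 \d_2$ acts on $x_j$ as $\lambda_1\,\d_1 x_j + \lambda_2\,\d_2 x_j$, so by Corollary \ref{cor_zerocoord} one has $\lambda_1\d_1 + \lambda_2\d_2 = 0$ if and only if $\lambda_1\vec\d_1 + \lambda_2\vec\d_2 = 0$ $\mu$-a.e. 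The whole problem thus reduces to the pointwise linear algebra of the two $\R^2$-valued functions $\vec\d_1$ and $\vec\d_2$.

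Next I would exploit singularity. Since $\mu \perp m_2$, the measure $\mu$ is concentrated on a Borel set $E$ with $m_2(E) = 0$, and by locality (Theorem \ref{thm_locality}) I may work entirely on $E$. The crux is to constrain $\vec\d$ on $E$ using the structure of Lebesgue null sets in the plane. Here I would invoke the Alberti--Cs\"ornyei--Preiss decomposition \cite{ACP}, which splits $E$ into a ``horizontal'' part and a ``vertical'' part each carrying a measurable tangent-direction field. The precise content I extract — isolated as the linear-relation lemma (Lemma \ref{lemma_linrel}) — is a single measurable, $\mu$-a.e.\ nonzero covector field $n = (n_1, n_2)$ on $E$, \emph{independent of the derivation}, such that
\begin{equation*}
n_1 \, \d x_1 + n_2 \, \d x_2 \;=\; 0 \qquad \mu\text{-a.e.\ on } E
\end{equation*}
for every $\d \in \U(\R^2,\mu)$. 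Equivalently, at $\mu$-a.e.\ point $x$ the vector $\vec\d(x)$ is forced onto the single line $n(x)^\perp$, simultaneously for all derivations. The mechanism is the one behind Lemma \ref{lemma_pure1unrect}: on each part a suitable Lipschitz projection has $\H^1$-null image, so its pushforward derivation vanishes (Lemma \ref{lemma_pushfwd}), killing exactly one component of $\vec\d$ relative to the local frame. I expect this translation of \cite{ACP} into the language of derivations to be the main obstacle; everything afterward is formal.

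Granting the line-field constraint, the conclusion is immediate linear algebra over $\Li(\R^2,\mu)$. At $\mu$-a.e.\ point both $\vec\d_1(x)$ and $\vec\d_2(x)$ are orthogonal to the same nonzero covector $n(x)$, hence they are parallel and $\det[\vec\d_1,\vec\d_2] = 0$ $\mu$-a.e. I would then take the explicit bounded measurable unit field $\tau := (-n_2, n_1)/|n|$ spanning $n^\perp$ and set
\begin{equation*}
\lambda_1 \;:=\; \langle \vec\d_2, \tau\rangle, \qquad
\lambda_2 \;:=\; -\,\langle \vec\d_1, \tau\rangle,
\end{equation*}
which lie in $\Li(\R^2,\mu)$. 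Since each $\vec\d_i$ is parallel to the unit vector $\tau$ we have $\vec\d_i = \langle\vec\d_i,\tau\rangle\,\tau$, and a one-line check gives $\lambda_1\vec\d_1 + \lambda_2\vec\d_2 = 0$ $\mu$-a.e., whence $\lambda_1\d_1 + \lambda_2\d_2 = 0$ by Corollary \ref{cor_zerocoord}. If both derivations vanish the dependence is trivial; otherwise, say $\d_1 \neq 0$, then $\vec\d_1 \neq 0$ on a set of positive $\mu$-measure by Corollary \ref{cor_zerocoord}, so $|\lambda_2| = |\vec\d_1|$ is not the zero function and $(\lambda_1,\lambda_2) \neq (0,0)$; no further bookkeeping of the type used at the end of the proof of Theorem \ref{thm_1sets} is needed, because the unit-field selection already prevents collapse. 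This shows $\{\d_1,\d_2\}$ is linearly dependent, and hence that $\U(\R^2,\mu)$ has rank at most $1$.
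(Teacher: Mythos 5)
Your argument is correct and, modulo packaging, it is the paper's own proof: the theorem is obtained by combining the linear-relation lemma (Lemma \ref{lemma_linrel}) with pointwise linear algebra over $\Li(\R^2,\mu)$, and your covector-field restatement of that lemma is faithful to it --- take $n := \chi_{F^1}\cdot(-g_1,1) + \chi_{F^2 \setminus F^1}\cdot(1,-g_2)$, which is bounded, measurable, and $\mu$-a.e.\ nonzero since one component is identically $1$ on each piece. The only real difference is the choice of scalars. The paper uses $\lambda_1 := \chi_{F^1}\d_2x_1 + \chi_{F^2}\d_2x_2$ and $\lambda_2 := \chi_{F^1}\d_1x_1 + \chi_{F^2}\d_1x_2$ and rules out $\lambda_1 = \lambda_2 = 0$ by a contradiction argument, while your unit field $\tau = (-n_2,n_1)/|n|$ produces scalars that agree with the paper's up to the positive factor $|n|$, and your non-degeneracy check is cleaner: $|\lambda_2| = |(\d_1x_1,\d_1x_2)|$ pointwise, so $\d_1 \neq 0$ forces $\lambda_2 \neq 0$ with no case analysis. (Both you and the paper in fact establish rank \emph{at most} $1$; that is all the argument can give, since $\U(\R^2,\mu)$ can vanish, e.g.\ for measures on purely $1$-unrectifiable sets.)

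One caution, not affecting correctness since you cite Lemma \ref{lemma_linrel} rather than prove it: your parenthetical about its ``mechanism'' misdescribes the hard step. The pushforward-vanishing argument of Lemma \ref{lemma_pure1unrect} requires a Lipschitz map sending the set to an $\H^1$-null image, but the sets $E^i$ of Theorem \ref{thm_covering} are only covered by stripes of \emph{positive} thickness, with covers that change as $\e \to 0$; no single projection has null image, so that argument does not apply. The paper instead disjointifies the stripes (Theorem \ref{thm_disjointcovering}), builds the uniformly Lipschitz functions $\varphi_{i,j}$ of Lemma \ref{lemma_approxcoord} which are constant transversally on each stripe and converge pointwise to the coordinate function, and then extracts the relation $\d x_2 = g_1 \d x_1$ on $F^1$ from weak-$*$ compactness of the slope fields $G^1_j$ together with the continuity property of derivations. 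If you ever needed the lemma in a self-contained way, that limiting argument --- not a single pushforward --- is the actual content.
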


%For Lebesgue singular measures $\mu$, r
Recall that the proof of Theorem \ref{lemma_dim1} consists of selecting open covers for a Lebesgue null set (on which $\mu$ is concentrated).  From these covers, one constructs a sequence of uniformly Lipschitz functions on $\R$ that converges to the identity.

The proof of Theorem \ref{thm_2dimLI} follows similar ideas.  However, in order to construct analogous functions, we will use recent results of Alberti, Cs\"ornyei, and Preiss about the structure of Lebesgue null sets \cite{ACP}.  This provides covers of such sets with a suitable geometry.  In what follows, we refer to Lebesgue null sets simply as null sets, Lebesgue singular measures as singular measures, and so on.

\subsection{%Preliminaries: 
Null Sets in $\R^2$} \label{subsect_ACP}

We begin with a few definitions from \cite{ACP}.

\begin{defn}
An \emph{$x_1$-curve} in $\R^2$ is a graph of the form
$$
\gamma^1(f) \;:=\; \{ (t, f(t)) \,:\, t \in \R \},
$$
where $f : \R \to \R$ is $1$-Lipschitz.  We call $f$ the \emph{(Lipschitz) parametrization} of $\gamma = \gamma^1(f)$.
For $\d > 0$, an \emph{$x_1$-stripe of thickness $\d$} is a set of the form
$$
\n^1(g;\d) \;:=\; \{ (t,y) \,:\, |y - g(t)| \leq \d / 2 \}
$$
where $g : \R \to \R$ is also $1$-Lipschitz.  An \emph{$x_2$-curve} and a \emph{$x_2$-stripe (of thickness $\d$)} are similarly defined.
\end{defn}
%To clarify, in \cite{ACP} such sets are called \emph{$x$-curves}, \emph{$x$-stripes}, \emph{$y$-curves}, and \emph{$y$-stripes}, respectively.

We now state a covering theorem for null sets in $\R^2$ \cite[Thm 2]{ACP}.  The case of compact null sets follows from the proof in \cite[pp.\ 4-5]{ACP}.

\begin{thm}[Alberti-Cs\"ornyei-Preiss, 2005] \label{thm_covering}
Let $E$ be a null set in $\R^2$.
Then there is a decomposition $E = E^1 \cup E^2$, where each set $E^i$ satisfies the following property: for each $\e > 0$, there are $x_i$-stripes $\{\n^i(f_j^i;\d_j^i)\}_{j=1}^\infty$ so that their union covers $E^i$ and so that $\sum_{j=1}^\infty \d_j^i < \e$.

If $E$ is compact, then for each $\e > 0$, there exist $N \in \N$ and $\d > 0$ so that each $E^i$ can be covered by $N$ many $x_i$-stripes $\n^i(f_j^i;\d)$, with $N \cdot \d < \e$, and so that each $f_j^i$ is piecewise-linear with finitely many points of non-differentiability.
\end{thm}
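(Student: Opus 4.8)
The plan is to prove the whole statement through a single engine --- a discrete covering problem on a fine grid applied to the compact case --- and then to obtain the general (countable) statement by exhaustion. First I would normalize: after translating and rescaling, assume the compact null set satisfies $E \subset (0,1)^2$, and fix $\e > 0$. Since $E$ is compact with $m_2(E) = 0$, its closed $h$-neighborhoods decrease to $E$, so the area of the $h$-pixelation of $E$ (the union $P_h$ of all closed side-$h$ grid squares meeting $E$) satisfies $m_2(P_h) \to 0$ as $h \to 0$. Hence for $h = 1/n$ with $n$ large, the number $m$ of occupied squares obeys $m \cdot h^2 < \eta$ for any prescribed $\eta$, i.e. $m < \eta n^2$.

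The key translation is that an $x_1$-stripe $\n^1(g;\d)$ of thickness $\d \approx h$ corresponds, in grid terms, to a thickened \emph{discrete $1$-Lipschitz staircase}: a path moving at most one row up or down per unit column, whose piecewise-linear parametrization $g$ automatically has only finitely many corners --- exactly the finitely many points of non-differentiability required. An $x_2$-stripe is the same object rotated by ninety degrees. Since a staircase spanning the grid covers of order $n$ occupied squares and contributes thickness $\approx h = 1/n$, covering all occupied squares by $N_1$ horizontal and $N_2$ vertical staircases gives total thickness $\approx (N_1 + N_2)/n$, so to beat $\e$ I need $N_1 + N_2 < \e n$. The problem thus becomes purely combinatorial: \emph{assign each occupied square to a horizontal-type or a vertical-type part --- this produces the decomposition $E = E^1 \cup E^2$ --- and cover the two parts by $O(\e n)$ slope-$\le 1$ staircases in their respective directions.}

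The heart of the matter, and the step I expect to be hardest, is this combinatorial covering lemma: that the $m < \eta n^2$ occupied cells can be covered by $O(\eta n)$ Lipschitz staircases \emph{once one is allowed to split them between the two orthogonal directions}. The subtlety is that area-smallness alone does not bound the number of monotone slope-$\le 1$ paths needed in a \emph{single} direction --- a set of tiny area can oscillate so steeply that any near-horizontal cover requires $\sim n$ stripes --- so the entire point is that precisely the cells forcing many horizontal stripes (the steep portions) are cheap to cover by vertical stripes, and conversely. Making this dichotomy quantitative, with the slope-$\le 1$ constraint and the two cone directions interacting correctly, is the genuine content of the Alberti--Cs\"ornyei--Preiss argument; I would build the two staircase families greedily, sweeping monotonically and charging each new path to a definite block of occupied cells so that the path count stays bounded by $m/n < \eta n$.

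Finally I would promote this scale-dependent construction to a single $\e$-independent decomposition by running the grid argument along scales $n_k \to \infty$ with tolerances $\e_k \to 0$ and taking $E^i$ to be the corresponding limiting part (a diagonalization over $k$); because the grid partitions refine, this yields fixed sets with $E = E^1 \cup E^2$ so that each $E^i$ is coverable by $x_i$-stripes of arbitrarily small total thickness. For a non-compact null set I would intersect with an exhaustion of $\R^2$ by cubes, apply the compact case on each piece with thickness budget $\e \cdot 2^{-k}$ on the $k$-th cube, and take countable unions of the resulting stripes and of the parts $E^i$; summing the geometric series keeps the total thickness below $\e$ and recovers the general statement.
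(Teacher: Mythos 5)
First, a point of comparison: the paper does not prove this statement at all --- it is quoted from Alberti--Cs\"ornyei--Preiss \cite{ACP}, with the compact case attributed to the argument on pp.~4--5 of that paper. So your proposal must stand as a self-contained proof of a deep external theorem, and it has a genuine gap at exactly the step you yourself flag as ``the heart of the matter.'' The combinatorial covering lemma --- that $m < \eta n^2$ occupied grid cells can be split into two parts, each covered by few slope-$\leq 1$ staircases in its respective direction --- is never proved, and the strategy you sketch for it (a greedy sweep, ``charging each new path to a definite block of occupied cells so that the path count stays bounded by $m/n < \eta n$'') rests on a false premise. A staircase passes through roughly $n$ grid cells, but nothing forces more than a handful of those cells to be \emph{occupied}, so no charging scheme can guarantee that each path absorbs $\sim n$ occupied cells. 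In fact the bound you claim is too strong to be true: the actual content of the ACP argument is an Erd\H{o}s--Szekeres/Dilworth-type lemma stating that any set of at most $kl$ points splits into a part covered by $k$ one-Lipschitz graphs over the $x_1$-axis and a part covered by $l$ such graphs over the $x_2$-axis, and this is sharp. Taking $k = l \approx \sqrt{m} \approx \sqrt{\eta}\,n$ gives total thickness of order $\sqrt{\eta}$ --- not $\eta$ --- which still suffices for the theorem (choose $\eta \approx \e^2$), but it cannot be reached by per-path counting; by sharpness, configurations exist that require $\gg \eta n$ staircases in total. Without a proof of this lemma, your argument reduces ACP's theorem to ACP's key lemma rather than proving it.

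There is a second gap in the passage from the compact case to the general one. You propose to intersect $E$ with an exhaustion of $\R^2$ by cubes and ``apply the compact case on each piece,'' but $E \cap Q$ is merely bounded and null, not compact, and one cannot pass to closures: the closure of a null set can have positive measure (if $E \supset \mathbb{Q}^2 \cap Q$ then $\overline{E \cap Q} = Q$). Since null sets need not be contained in any closed null set, no exhaustion-and-union argument deduces the countable-stripe statement for arbitrary null sets from the compact statement; in \cite{ACP} the general case requires a separate argument, and only the compact case --- with finitely many stripes and piecewise-linear boundary curves --- comes from the cited pp.~4--5 proof. (Your final diagonalization making the decomposition $E = E^1 \cup E^2$ independent of $\e$ is also only sketched, but that is the most repairable of the issues; compare the remark following Theorem \ref{thm_covering} in the paper, which takes $E^1 := \bigcap_{k} \bigcup_{i} \n_i^{1,1/k}$ and $E^2 := E \setminus E^1$.)
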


\begin{rmk}
Strictly speaking, the argument in \cite{ACP} only shows that shows that for each $\e > 0$, the null set $E$ can be covered by unions of $x_1$- and $x_2$-stripes $\{\n_i^{1,\e}\}_{i=1}^\infty$ and $\{\n_j^{2,\e}\}_{j=1}^\infty$, respectively, with the desired properties.  However, one easily obtains the subsets $E^1$ and $E^2$ by putting
$$
E^1 \;:=\; \bigcap_{k=1}^\infty \bigcup_{i=1}^\infty \n_i^{1,1/k} \; \textrm{ and } \;
E^2 \;:=\; E \setminus E^1.
$$
\end{rmk}

The next theorem will be a crucial step in the proof of Theorem \ref{thm_2dimLI}.

\begin{thm} \label{thm_disjointcovering}
Let $E$ be a compact null set in $\R^2$.  In addition to the properties given in Theorem \ref{thm_covering}, for $i = 1, 2$ and for each $\e > 0$ the covering $x_i$-stripes for $E^i$ can be chosen to have pairwise-disjoint interiors.
\end{thm}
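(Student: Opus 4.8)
The plan is to start from the covering provided by Theorem~\ref{thm_covering} and perturb it so that the stripe interiors become pairwise disjoint, at the cost of slightly enlarging the thickness budget. Fix $i \in \{1,2\}$ and $\e > 0$; by Theorem~\ref{thm_covering} applied with $\e/2$ in place of $\e$, the compact piece $E^i$ is covered by finitely many $x_i$-stripes $\{\n^i(f^i_j;\d)\}_{j=1}^N$ with $N\cdot\d < \e/2$, where each $f^i_j$ is piecewise-linear with finitely many corners. Work in the $i=1$ case (the $i=2$ case being identical after swapping coordinates), so a stripe $\n^1(g;\d)$ is the region between two $1$-Lipschitz graphs $g \pm \d/2$, and ``interior'' means the open region $|y-g(t)| < \d/2$.

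First I would observe that two such stripes $\n^1(f^1_a;\d)$ and $\n^1(f^1_b;\d)$ have overlapping interiors precisely on the open set where $|f^1_a(t) - f^1_b(t)| < \d$, i.e.\ where their center graphs are within $\d$ of each other. The key geometric point is that since each center is $1$-Lipschitz, the difference $f^1_a - f^1_b$ is $2$-Lipschitz, so the ``overlap zone'' in the $t$-variable is a union of intervals on which the centers stay close. The plan is to resolve overlaps by a ``stacking'' procedure: process the stripes in order $j = 1, \dots, N$, and whenever stripe $j$ would intersect the interior of an already-placed stripe, push it vertically away by a small amount and, where necessary, truncate or shift its center graph so that its interior sits strictly above or strictly below the conflicting region. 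Because there are only finitely many stripes and each center graph is piecewise-linear, only finitely many crossings occur, so finitely many local adjustments suffice. Each adjustment can be realized by replacing the center $f^1_j$ with a new $1$-Lipschitz function obtained by taking pointwise maxima/minima with shifted copies (invoking Lemma~\ref{lemma_lipprops}(3) to preserve the Lipschitz bound) and inserting short transition ramps of slope at most $1$; this keeps each new center $1$-Lipschitz and piecewise-linear. The vertical separations introduced are chosen small enough, and one may have to split a stripe into two along its corners, so that the total number of stripes grows to some $N' < \infty$ and the total thickness stays below $\e$ by absorbing the extra budget $\e/2$ held in reserve.

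The step I expect to be the main obstacle is verifying that after all adjustments the new stripes still \emph{cover} $E^1$: shifting and truncating center graphs to separate interiors risks opening gaps through which points of $E^1$ escape. To control this I would arrange that every adjustment only moves a stripe boundary through a region already covered by another stripe in the collection, so that the \emph{union} of the (closed) stripes is never decreased even as the interiors are pulled apart; making the vertical pushes strictly smaller than the gaps between successive center graphs, and closing any newly created slivers with additional thin $x_1$-stripes whose total thickness is charged against the reserved $\e/2$, guarantees this. A clean way to package the whole construction is to reduce to the case of two stripes at a time and argue by induction on $N$: given a disjoint-interior cover of $E^1 \cap (\text{first } N{-}1 \text{ stripes})$, one inserts the $N$-th stripe by subtracting off, from its center, the already-occupied vertical slab, which on each maximal overlap interval amounts to replacing the $1$-Lipschitz center by the $1$-Lipschitz ``push-off'' $f^1_N \vee (\text{top of conflicting stripe} + \d)$ or the analogous $\wedge$. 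Finally I would note that the two families for $i=1$ and $i=2$ are handled completely independently, so disjointness of interiors within each family is all that is required and no compatibility between the $x_1$- and $x_2$-stripes need be imposed, which is what makes the finite, piecewise-linear structure from Theorem~\ref{thm_covering} exactly strong enough to push through.
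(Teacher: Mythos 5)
Your proposal contains the right local move---pushing an overlapping stripe off a conflicting one by replacing its center $f$ with $f \vee (\text{conflicting center} + \d)$---but it is missing the structural step that makes this move safe, and that omission is a genuine gap. The paper first proves an \emph{uncrossing} lemma (Lemma \ref{lemma_nocrossings}): any finite family of $x_1$-curves can be replaced by a family of $1$-Lipschitz, piecewise-linear curves with the \emph{same union} whose parametrizations are globally pointwise ordered, $f_1 \leq f_2 \leq \cdots \leq f_N$. Only after this reordering does the push-off $h_{1,j} := f_j \vee (f_1 + \d)$ (for $j>1$) preserve coverage: in the inclusion \eqref{eq_stillcover}, a point $(t,y)$ of the old stripe $\n^1(f_j;\d)$ that falls below the pushed-up stripe satisfies $y \geq f_j(t)-\d/2 \geq f_1(t)-\d/2$ \emph{because of the ordering}, and hence lands in $\n^1(f_1;\d)$. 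With the ordering in hand, the construction keeps the same $N$ and the same $\d$, so no extra thickness budget is ever needed.

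Without the preliminary uncrossing, your stacking procedure breaks exactly where two center curves cross transversally: pushing the overlapping portion of stripe $N$ up (via $\vee$) uncovers the part of the original stripe that lay \emph{below} the conflicting stripe, and pushing down (via $\wedge$) uncovers the part above. Using $\vee$ on some maximal overlap intervals and $\wedge$ on others forces the new center to jump by at least $2\d$ at the transition, and a connecting ramp of slope at most $1$ occupies a horizontal interval of length $\geq 2\d$ on which the stripe covers neither the upper nor the lower sheet of the original stripe---so points of $E^1$ can escape there. Your fallback of patching such slivers with additional thin stripes charged against a reserved $\e/2$ is not controlled: you give no bound on the number or total thickness of the slivers created, the patch stripes can themselves overlap previously placed stripes (restarting the problem), and there is no termination argument for the resulting iteration. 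The fix is precisely the paper's: prove the uncrossing lemma first (iterated pointwise maxima/minima, which preserve the $1$-Lipschitz bound, piecewise linearity, and the union of the graphs), and then your push-off argument goes through verbatim with no budget loss.
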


To prove the theorem, we first require a lemma.  It guarantees that $x_i$-curves associated to the covering $x_i$-stripes can be chosen without transversal crossings.  (The basic idea to is to take pointwise maxima among the collection of $x_i$-curves, and iterate.)

\begin{lemma} \label{lemma_nocrossings}
Let $i = 1, 2$.  For each collection of $x_i$-curves $\{\a_j\}_{j=1}^N$, there is a collection of $x_i$-curves $\{\b_j\}_{j=1}^N$, with $\b_j := \gamma^i(f_j)$, so that
\begin{equation} \label{eq_sameunion}
\a_1 \cup \ldots \cup \a_N \;=\; \b_1 \cup \ldots \cup \b_N
\end{equation}
and so that, for all $t \in \R$ and all $1 < j \leq N$, we have
\begin{equation} \label{eq_curveorder}
f_{j-1}(t) \;\leq\; f_j(t).
\end{equation}
If the curves $\{\a_j\}_{j=1}^N$ are piecewise-linear, then so are the curves $\{\b_j\}_{j=1}^N$.
\end{lemma}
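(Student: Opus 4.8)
The plan is to prove the lemma by induction on $N$, exploiting the hint in the parenthetical remark: iterated pointwise maxima. The key observation is that for two $x_i$-curves $\gamma^i(f)$ and $\gamma^i(g)$, the functions $f \wedge g$ and $f \vee g$ are again $1$-Lipschitz (by Part (3) of Lemma \ref{lemma_lipprops}, noting that minima and maxima preserve the Lipschitz constant), so $\gamma^i(f \wedge g)$ and $\gamma^i(f \vee g)$ are again legitimate $x_i$-curves. Moreover, the union of the two graphs is preserved: at each fixed $t$, the pair of values $\{f(t), g(t)\}$ equals the pair $\{(f \wedge g)(t), (f \vee g)(t)\}$, so
\[
\gamma^i(f) \cup \gamma^i(g) \;=\; \gamma^i(f \wedge g) \cup \gamma^i(f \vee g).
\]
This is the engine of the whole argument: replacing any two curves by their pointwise min and max keeps the union fixed while forcing one curve to lie everywhere below the other. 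Since min and max of piecewise-linear functions are again piecewise-linear (with at most the combined breakpoints), the final claim about preserving piecewise-linearity comes for free.

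\textbf{Key Steps.} First I would set up an ordering procedure analogous to a sorting network. Given the collection $\{\alpha_j\}_{j=1}^N$ with parametrizations $\{a_j\}_{j=1}^N$, I repeatedly pick any pair of indices $j < k$ and replace $(a_j, a_k)$ by $(a_j \wedge a_k, a_j \vee a_k)$. Each such ``swap'' preserves the total union by the identity above, keeps all functions $1$-Lipschitz, and preserves piecewise-linearity. I would then argue that after finitely many such operations the functions become pointwise-ordered, i.e.\ $f_1(t) \leq f_2(t) \leq \cdots \leq f_N(t)$ for all $t \in \R$, which is precisely inequality \eqref{eq_curveorder}. The cleanest way to guarantee termination is to perform the swaps in the pattern of bubble sort applied simultaneously at every $t$: the operation $(f,g) \mapsto (f \wedge g, f \vee g)$ is exactly a comparator gate, and it is a classical fact that a bubble-sort network of $N(N-1)/2$ comparators sorts every input sequence. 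Since the same fixed sequence of comparator gates sorts the values at every $t \in \R$ simultaneously, the resulting functions $\{f_j\}_{j=1}^N$ satisfy \eqref{eq_curveorder} uniformly in $t$.

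\textbf{Main Obstacle.} The subtle point — and the step I expect to require the most care — is justifying that a \emph{single, $t$-independent} sequence of min/max operations simultaneously sorts the values at all $t \in \R$. One cannot choose the pair to swap based on the ordering at a particular $t$, since that ordering varies with $t$; instead I must commit in advance to a fixed combinatorial schedule of comparator gates (the bubble-sort network) and invoke the fact that such a network sorts \emph{any} input. This is where the sorting-network formalism earns its keep: the $0$-$1$ principle (or direct induction on the bubble-sort network) guarantees correctness for every input vector, hence for the value-vector $(a_1(t), \ldots, a_N(t))$ at each individual $t$. I would therefore state the comparator-network fact explicitly, apply it pointwise, and conclude that the functions $f_j$ obtained after running the full schedule are the desired pointwise-ordered parametrizations. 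Finally, since each $f_j$ is built from the $a_j$ by a fixed finite composition of $\wedge$ and $\vee$ operations, it is $1$-Lipschitz, and piecewise-linear whenever the $a_j$ are, completing \eqref{eq_sameunion}, \eqref{eq_curveorder}, and the piecewise-linear claim.
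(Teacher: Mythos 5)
Your proposal is correct and rests on essentially the same mechanism as the paper's proof: the comparator operation $(f,g) \mapsto (f \wedge g,\, f \vee g)$ preserves the union of the graphs, the $1$-Lipschitz bound, and piecewise-linearity, and a fixed, $t$-independent schedule of such operations sorts the parametrizations pointwise. The only difference is bookkeeping: the paper realizes the sort as an induction on $N$ whose inductive step is an insertion pass (bubbling the new parametrization $g_{n+1}$ through the already-sorted $g_1 \le \cdots \le g_n$ via the running maxima $h_j$), whereas you commit in advance to a bubble-sort comparator network and invoke its classical correctness (the $0$--$1$ principle); both devices resolve exactly the subtlety you single out, namely that the schedule of swaps cannot be allowed to depend on $t$.
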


\begin{proof}[Proof of Lemma \ref{lemma_nocrossings}]
By symmetry, we assume that $i = 1$.   We argue by induction, and for $N = 1$, the lemma trivially holds with $\b_1 = \a_1$.  

Fix $n \in \N$ and let $\{\a_j\}_{j=1}^{n+1}$ be any collection of $x_1$-curves.  By the induction hypothesis, for $\{\a_j\}_{j=1}^n$ there are $x_1$-curves $\{b_j\}_{j=1}^n$ which satisfy \eqref{eq_sameunion} and \eqref{eq_curveorder}.
For $j = 1, 2, \ldots, n$, let $g_j : \R \to \R$ be the parametrization of $b_j$, so $g_j \leq g_{j+1}$, and let $g_{n+1}$ be the parametrization of $\a_{n+1}$.  We now define
$$
h_j \;:=\;
\begin{cases}
g_1 \vee g_{n+1}, & j=1 \\
g_j \vee h_{j-1}, & 1 < j \leq n
\end{cases}, \; \;
f_j \;:=\;
\begin{cases}
g_1 \wedge g_{n+1}, & j=1 \\
g_j \wedge h_{j-1}, & 1 < j \leq n \\
h_n, & j = n+1.
\end{cases}
$$

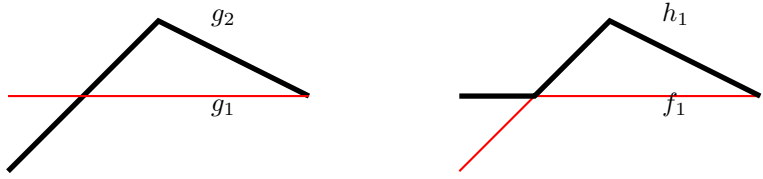
\begin{figure}[htp]
\centering
\begin{pspicture}(0,0.5)(12,2.5)
\psline[linewidth=2pt](1,0.5)(3,2.5)(5,1.5)
\put(3.7,2.5){$g_2$}
%\uput[45](3.7,2.5){$g_2$}
\psline[linecolor=red](1,1.5)(5,1.5)
\put(3.7,1.3){$g_1$}
%\uput[45](3.7,1.3){$g_1$}
\psline[linecolor=red](7,0.5)(8,1.5)(11,1.5)
\put(9.7,1.3){$f_1$}
%\uput[45](9.7,1.3){$f_1$}
\psline[linewidth=2pt](7,1.5)(8,1.5)(9,2.5)(11,1.5)
\put(9.7,2.5){$h_1$}
%\uput[45](9.7,2.5){$h_1$}
\end{pspicture}
\caption{Uncrossing $x_1$-curves, for $n=2$.}
\end{figure}
By construction, for each $x \in \R$ and each $k$, there is a unique index $j$ so that $g_k(x) = f_j(x)$.  Putting $\b_j := \gamma^1(f_j)$, we see that equation \eqref{eq_sameunion} holds for the collections of curves
$\{\a_j\}_{j=1}^{n+1}$ and $\{\b_j\}_{j=1}^{n+1}$.

For each $j$, we have $g_j \leq g_{j+1}$ by hypothesis and $h_j \leq h_{j+1}$ by construction, so
\begin{eqnarray*}
f_j &=&
g_j \wedge h_{j-1} \;\leq\;
g_j \;\leq\;
g_{j+1}, \\
f_j &=&
g_j \wedge h_{j-1} \;\leq\;
g_j \vee h_{j-1} \;=\;
h_j.
\end{eqnarray*}
By definition of $f_{j+1}$, it follows that inequality \eqref{eq_curveorder} holds for all $j$.
\end{proof}

For Theorem \ref{thm_disjointcovering}, the basic idea is that if $x_i$-stripes overlap, then by uncrossing the corresponding $x_i$-curves, the top stripe can then be ``pushed'' off the bottom one.

\begin{proof}[Proof of Theorem \ref{thm_disjointcovering}]
Let $\e > 0$ be given.  By Theorem \ref{thm_covering}, for each set $E^i$ there is a $\d > 0$ and there are $x_i$-stripes $\{\n^i(g_j^i;\d)\}_{j=1}^N$ so that their union covers $E^i$, so that each $g_j^i$ is piecewise-linear, and so that $N \cdot \d < \e$.  The argument is symmetric, so we assume that $i = 1$.  %As a shorthand, w
We also write $g_j := g_j^i$.

By Lemma \ref{lemma_nocrossings}, there are $1$-Lipschitz %, piecewise-linear 
functions  $\{f_j\}_{j=1}^N$ so that the $x_1$-curves
 $\{\gamma^1(g_j)\}_{j=1}^N$ and $\{\gamma^1(f_j)\}_{j=1}^N$
satisfy equations \eqref{eq_sameunion} and \eqref{eq_curveorder}.  Put
$$
h_{1,j} \;:=\;
\left\{\begin{array}{ll}
f_1, & j = 1 \\
f_j \vee (f_1 + \d), & j > 1.
\end{array}\right.
$$

\begin{figure}[htp]
\centering
\begin{pspicture}(0,0.5)(12,3.5)
%BEFORE BUMP
\psline[]{<->}(0.75,1.5)(0.75,2.5)
%\uput[45](0.5,2){$\d$}
\put(0.4,2){$\d$}
\psline[](1,1.5)(3,2.5)(5,1.5)
\psline[](1,0.5)(3,1.5)(5,0.5)
\psline[linestyle=dashed](1,1)(3,2)(5,1)
%\uput[45](5.8,1){$h_{1,1}=f_1$}
\put(5.2,1){$h_{1,1}=f_1$}
\psline[linecolor=red](1,2.5)(3,2.5)(5,3.5)
\psline[linecolor=red](1,1.5)(3,1.5)(5,2.5)
\psline[linestyle=dashed,linecolor=red](1,2)(3,2)(5,3)
\put(5.2,3){$f_2$}
%\uput[45](5.2,3){$f_2$}
%AFTER BUMP
\psline[](7,1.5)(9,2.5)(11,1.5)
\psline[](7,0.5)(9,1.5)(11,0.5)
\psline[linestyle=dashed](7,1)(9,2)(11,1)
\put(11.2,1){$h_{1,1}$}
%\uput[45](11.4,1){$h_{1,1}$}
\psline[linecolor=red](7,2.5)(9,3.5)(10,3)(11,3.5)
\psline[linecolor=red](7,1.55)(9,2.55)(10,2.05)(11,2.55)
\psline[linestyle=dashed,linecolor=red](7,2)(9,3)(10,2.5)(11,3)
\put(11.2,3){$h_{1,2}$}
%\uput[45](11.4,3){$h_{1,2}$}
\end{pspicture}
\caption{Choosing stripes with pairwise-disjoint interiors.}
\end{figure}
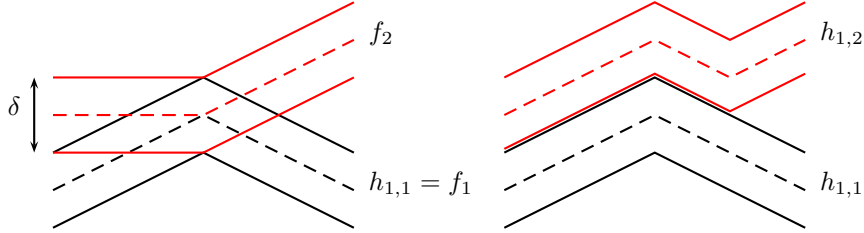

By construction, for $j > 1$ none of the stripes $\n^1(h_{1,j};\d)$ meets the interior of the stripe $\n^1(h_{1,1};\d)$.  It also remains that $h_{1,j} \leq h_{1,j+1}$.  We now claim that
\begin{equation} \label{eq_stillcover}
\bigcup_{j=1}^N \n^1(f_j;\d) \;\subset\;
%S_1 \;:=\;
\bigcup_{j=1}^N \n^1(h_{1,j};\d).
\end{equation}
Fix $(t,y) \in \n^1(f_j;\d)$.  In the case $h_{1,j}(t) = f_j(t)$, it follows by construction that $(t,y) \in \bigcup_{j=1}^N \n^1(h_{1,j};\d)$.  If instead $h_{1,j}(t) = f_1(t) + \d$, the point $(t,y)$ satisfies
\begin{eqnarray}
\d/2 &<& |y - f_1(t)| \\
\label{eq_bumpedstripe}
f_j(t) &<& f_1(t) + \d \;=\; h_{1,j}(t)
\end{eqnarray}
where again, $j > 1$.  From inequality \eqref{eq_bumpedstripe} we obtain
$$
y - (f_1(t) + \d) \;\leq\;
y - f_j(t) \;\leq\;
\d/2.
$$
Since $j > 1$ and $(t,y) \in \n^1(f_j;\d)$, we may further assume by inequality \eqref{eq_curveorder} that $y - f_1(t) > \d/2$.  This in turn gives the estimate
$$
-\d/2 \;=\;
\d/2 - \d \;<\;
y - f_1(t) - \d
$$
from which we obtain $(t,y) \in \n^1(h_{1,j};\d)$.  This gives the set inclusion \eqref{eq_stillcover}.

We now iterate the argument.  For $k = 1, 2, \ldots, N$, %each integer $k = 1, 2, \ldots, N$,
put
$$
h_{k,j} \;:=\;
\left\{\begin{array}{ll}
h_{k-1,\,j}, & j \leq k \\
f_k \vee (h_{k,\,k} + \d), & j > k.
\end{array}\right.
$$
Arguing similarly, we see that inclusion \eqref{eq_stillcover} holds with $h_{k,\,j}$ in place of $h_{1,\,j}$ %, that the stripes $\{\n^1(h_{k,\,j};\d)\}_{j=1}^k$ have pairwise-disjoint interiors 
and that, for $k \leq j \leq N$, none of the stripes $\n^1(h_{k,\,j};\d)$ meets the interiors of the previous $k$ many $x_1$-stripes.  Thus $\{\n^1(h_{j,\,j};\d)\}_{j=1}^N$ is the desired collection of $x_1$-stripes for $E^1$.
\end{proof}

Before returning to derivations, we recall a fact \cite[Rmk 3(ii)]{ACP} about the geometry of $E^1$ and $E^2$.  For completeness, we prove it below.

\begin{lemma} \label{lemma_curveintersect}
Let $E$ be a null set in $\R^2$ and let $L \in (0,1)$.  For $\{i,k\} = \{1,2\}$, if $E^i$ is the subset from Theorem \ref{thm_covering} and if $g : \R \to \R$ is $L$-Lipschitz, then
$$
\H^1(E^i \cap \gamma^k(g)) \;=\; 0.
$$
\end{lemma}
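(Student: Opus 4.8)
The plan is to exploit the \emph{transversality} between the ``flat'' covering stripes of $E^i$ and the ``steep'' curve $\gamma^k(g)$: since $g$ is $L$-Lipschitz with $L < 1$, an $x_k$-curve meets each $x_i$-stripe in an arc whose length is controlled linearly by the thickness of that stripe. By the symmetry interchanging the roles of $x_1$ and $x_2$, I would treat only the case $i = 1$, $k = 2$; the case $i=2$, $k=1$ then follows verbatim.

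First I would fix $\e > 0$ and apply Theorem \ref{thm_covering} to cover $E^1$ by $x_1$-stripes $\{\n^1(f_j;\d_j)\}_{j=1}^\infty$ with $\sum_{j=1}^\infty \d_j < \e$, where each parametrization $f_j$ is $1$-Lipschitz. Since $E^1 \subset \bigcup_j \n^1(f_j;\d_j)$ gives $E^1 \cap \gamma^2(g) \subset \bigcup_j \big(\gamma^2(g) \cap \n^1(f_j;\d_j)\big)$, countable subadditivity of $\H^1$ reduces the problem to a uniform single-stripe estimate of the form $\H^1\big(\gamma^2(g) \cap \n^1(f_j;\d_j)\big) \le C(L)\,\d_j$, with $C(L)$ depending only on $L$.

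For that single-stripe estimate I would parametrize $\gamma^2(g)$ by the injective map $\Phi(s) := (g(s), s)$, which is $\sqrt{1+L^2}$-Lipschitz from $\R$ into $\R^2$, so that $\H^1(\Phi(S)) \le \sqrt{1+L^2}\, m_1(S)$ for every $S \subset \R$. The intersection $\gamma^2(g) \cap \n^1(f_j;\d_j)$ equals $\Phi(S_j)$ with $S_j := \{ s : |s - f_j(g(s))| \le \d_j/2 \}$. The key computation is that $\psi_j(s) := s - f_j(g(s))$ satisfies $\psi_j(s) - \psi_j(s') \ge (1-L)(s - s')$ whenever $s > s'$, since $f_j$ being $1$-Lipschitz and $g$ being $L$-Lipschitz force $|f_j(g(s)) - f_j(g(s'))| \le L\,(s - s')$. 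Hence $\psi_j$ is strictly increasing, $S_j = \psi_j^{-1}([-\d_j/2, \d_j/2])$ is an interval, and the lower bound $1-L$ yields $m_1(S_j) \le \d_j/(1-L)$. Combining these gives $\H^1\big(\gamma^2(g) \cap \n^1(f_j;\d_j)\big) \le \frac{\sqrt{1+L^2}}{1-L}\,\d_j$.

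Summing over $j$ then produces $\H^1(E^1 \cap \gamma^2(g)) \le \frac{\sqrt{1+L^2}}{1-L}\,\e$, and letting $\e \to 0$ finishes the argument. The only delicate point --- and where the hypothesis $L < 1$ is indispensable --- is the lower bound $\psi_j(s) - \psi_j(s') \ge (1-L)(s-s') > 0$: this is precisely the quantitative transversality that keeps $\gamma^2(g)$ from running nearly parallel to a covering stripe, in which case the intersection could have large or even infinite length. Everything else (the Lipschitz behavior of $\H^1$ under $\Phi$, the identity $\H^1 = m_1$ on $\R$, and subadditivity) is routine.
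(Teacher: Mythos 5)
Your proof is correct, and it shares the paper's overall strategy: cover $E^i$ by the stripes of Theorem \ref{thm_covering} with total thickness less than $\e$, prove a per-stripe estimate of order $C(L)\,\d_j$ coming from the transversality of the $L$-Lipschitz curve against the $1$-Lipschitz stripes, and sum. Where you genuinely differ is in the single-stripe mechanism and in how it is converted into a bound on $\H^1$. The paper argues geometrically: letting $p_j$ be the point of $\gamma^k(g) \cap \n^i_j$ with least $x_k$-coordinate, it traps the whole intersection inside $C(p_j) \cap \n^i_j$, where $C(p_j)$ is a one-sided cone with vertex $p_j$ and direction $\vec{e}_k$ whose aperture depends only on $L$; this yields a \emph{diameter} bound of order $\d_j$ per stripe, and these sets are then fed into the covering definition of Hausdorff measure, with the $\limsup_{\e \to 0}$ needed so that the covering sets' diameters shrink and the pre-measures $\H^1_\delta$ are controlled. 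You argue analytically: parametrizing the curve by $\Phi(s) = (g(s),s)$ and showing that $\psi_j(s) = s - f_j(g(s))$ increases with increments at least $(1-L)(s-s')$, so that each intersection is the $\Phi$-image of an interval of length at most $\d_j/(1-L)$; this gives a bound on $\H^1$ itself (not just on diameters), and countable subadditivity finishes. The transversality input is identical --- your lower bound $1-L$ is the analytic form of the paper's cone aperture --- but your route buys two small things: it needs no small-diameter bookkeeping, since subadditivity of the measure replaces the $\H^1_\delta$ limit, and it shows explicitly that each intersection $\gamma^k(g) \cap \n^i_j$ is a single arc, a connectedness fact that the paper's choice of the lowest point $p_j$ uses only tacitly. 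The paper's cone argument, in exchange, is shorter and purely geometric.
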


\begin{proof}
By Theorem \ref{thm_covering}, for each $\e > 0$ there are $x_i$-stripes $\n^i_j := \n^i(f_j^i;\d_j^i)$, $j \in \N$
so that $E^i \subset \bigcup_j \n^i_j$ and so that $\sum_i \d_j^i < \e$.   Clearly, the same union of $x_i$-stripes also covers the subset $E^i \cap \gamma^k(g)$.

For each $j \in \N$, let $p_j$ be the point in $\gamma^k(g) \cap \n^i_j$ with least $x_k$-coordinate.  Note that $\gamma^k(g) \cap \n^i_j$ can be covered by the set %of the form 
$C(p_j) \cap \n^i_j$, where $C(p_j)$ is a one-sided cone with vertex $p_j$, direction $\vec{e}_k$, and opening angle $2\arctan(1/L)$.  In particular, %the set 
$C(p_j) \cap \n^i_j$ has diameter at most $C \cdot \d_j^i$, where $C$ is a positive constant depending only on $L$.

In this way we cover $E^i \cap \gamma^k(g)$ with open sets $\{O_j\}_{j=1}^\infty$, each of %which has 
diameter at most $2C \cdot \d_j^i$ and hence at most $2C \cdot \e$.  We now estimate:
$$
\H^1(E^i \cap \gamma^k(g)) \;\leq\;
\limsup_{\e \to 0} \sum_{j=1}^\infty \diam(O_j) \;\leq\;
\limsup_{\e \to 0} \sum_{j=1}^\infty 2C \cdot \d_j^i \;<\;
2C \cdot \e.
$$
Since $\e > 0$ was arbitrary, the lemma follows.
\end{proof}

\subsection{Approximating the Coordinate Functions} 
\label{subsect_pf_2dim_LI}

In this section we prove Theorems \ref{thm_2dimLI} and \ref{thm_rigiditylowdim}.  The proof of Theorem \ref{thm_2dimLI} consists of two main steps, each of which is a separate lemma below.

\begin{lemma} \label{lemma_approxcoord}
Let $E$ be a compact null set in $\R^2$ and let $E^1$ and $E^2$ be as in Theorem \ref{thm_disjointcovering}.  For $i \in \{1,2\}$, there exist Lipschitz functions $\{ \varphi_{i,j} \}_{j=1}^\infty$ on $\R^2$ so that
\begin{enumerate}
\item
$\varphi_{i,j}$ converges pointwise to $x_2$;
\item
each $\varphi_{i,j}$ is $3$-Lipschitz and piecewise linear;
\item
for each $p \in E^i$
there is a \emph{closed} neighborhood $K$ containing $p$ so that $\varphi_{i,j}|K$ depends only on the variable $x_i$.
\end{enumerate}
\end{lemma}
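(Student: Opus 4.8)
The plan is to transplant the one-dimensional construction from the proof of Lemma~\ref{lemma_dim1} to the plane, replacing the open covers of the null set by the disjoint stripe covers of Theorem~\ref{thm_disjointcovering}. By symmetry it suffices to treat $i=1$; the case $i=2$ follows verbatim upon interchanging the roles of $x_1$ and $x_2$ (and hence the target coordinate). So I fix $j \in \N$ and first produce, via Theorem~\ref{thm_disjointcovering} applied with $\e = 1/(2j)$, finitely many $x_1$-stripes $\{\n^1(h_l;\d)\}_{l=1}^N$ of a common thickness $\d$ with \emph{pairwise-disjoint interiors}, each $h_l$ piecewise-linear (hence $1$-Lipschitz), covering $E^1$, and with $N\d < 1/(2j)$. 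The one extra point I need is that $E^1$ lies in the \emph{interior} of $S := \bigcup_{l=1}^N \n^1(h_l;\d)$: this I would arrange by first covering $E^1$ by open stripes of small total thickness and only then applying the uncrossing-and-pushing procedure, since by the inclusion \eqref{eq_stillcover} that procedure only enlarges the covered region.

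Next I would define the flattened function by subtracting from $x_2$ the total stripe mass lying below a given height, namely
\[
\varphi_{1,j}(t,y) \;:=\; y - M_j(t,y),
\qquad
M_j(t,y) \;:=\; \sum_{l=1}^N \Big( \d \wedge \big( (y - h_l(t) + \tfrac{\d}{2}) \vee 0 \big) \Big),
\]
so that, because the interiors are disjoint, $M_j(t,y)$ is exactly the length of $S \cap (\{t\}\times(-\infty,y])$. Property~(1) is then immediate, since $0 \le M_j \le N\d < 1/(2j)$ forces $\varphi_{1,j} \to x_2$ uniformly. For property~(2) I note that $M_j$ is piecewise-linear (a finite sum of clamps of piecewise-linear functions), and I would bound the Lipschitz constant by checking the partials a.e.: one has $\partial_y \varphi_{1,j} = 1 - \chi_S \in \{0,1\}$, while $\partial_t \varphi_{1,j} = h_{l_0}'(t)$, where $l_0$ is the \emph{unique} index whose open stripe contains the height $y$ over $t$ (uniqueness being exactly the disjoint-interiors property), so $|\partial_t \varphi_{1,j}| \le 1$; thus $\varphi_{1,j}$ is $\sqrt2$-Lipschitz, in particular $3$-Lipschitz. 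For property~(3), given $p \in E^1 \subset \inter(S)$ I choose a closed ball $K \ni p$ with $K \subset \inter(S)$; there $\chi_S \equiv 1$, so $\partial_y \varphi_{1,j} \equiv 0$ on $K$, and since a Lipschitz function with a.e.\ vanishing $y$-derivative is constant along each vertical segment, $\varphi_{1,j}|K$ depends only on $x_1$.

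The step I expect to be the real obstacle is reconciling the flattening with a \emph{uniform} (here $3$-independent of $j$, $N$, $\d$) Lipschitz bound, given that the gaps between consecutive stripes can be arbitrarily thin. The naive recipe — set $\varphi_{1,j} = h_l$ inside stripe $l$ and $\varphi_{1,j} = x_2$ in the gaps — is forced to make up the flattened height across each gap and so acquires slope of order $\d/(\text{gap width})$, which is unbounded. The cumulative definition above sidesteps this by never resetting the accumulated defect $M_j$; the essential gain is that $|\partial_t M_j| \le 1$, and this estimate depends decisively on the stripes having disjoint interiors, so that at each height only a single stripe is ``active.'' This is why Theorem~\ref{thm_disjointcovering}, rather than merely Theorem~\ref{thm_covering}, is the indispensable input, and why the secondary care taken to keep $E^1$ inside $\inter(S)$ is what upgrades the ``depends only on $x_1$'' conclusion from $\mu$-a.e.\ points to \emph{every} point of $E^1$.
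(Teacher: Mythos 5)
Your proposal is correct, and the function you build is essentially the paper's own: the paper sets $\varphi_{1,j}(p) = \int_{\{p_1\}\times[0,p_2]} \chi_{\m_j}\,d\H^1$, i.e.\ $x_2$ minus the stripe mass accumulated over $\{p_1\}\times[0,p_2]$, while your $y - M_j(t,y)$ subtracts the stripe mass over $\{t\}\times(-\infty,y]$; the two differ by a function of $t$ alone, bounded by $N\delta$, so Properties (1)--(3) are equivalent for the two choices. Where you genuinely depart from the paper is in verifying the uniform Lipschitz bound, which occupies most of the paper's proof: there it is done by a four-case metric argument (same vertical line, same stripe, same gap, general position), supported by the interval-combinatorics Claim \ref{claim_intervals}, yielding the constant $3$; you instead note that $M_j$ is a finite sum of clamped piecewise-linear functions, hence Lipschitz, compute the gradient a.e.\ --- $(h_{l_0}'(t),0)$ in the interior of the unique active stripe, $(0,1)$ off the closed stripes --- and invoke the standard fact that an a.e.\ gradient bound controls the Lipschitz constant on a convex domain. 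This is shorter and in fact yields the sharper constant $1$ (your stated $\sqrt{2}$ comes from adding the two partial bounds separately; either way it is $\leq 3$), and in both arguments the disjoint-interiors conclusion of Theorem \ref{thm_disjointcovering} enters at exactly the same point, to guarantee that only one stripe is active over a.e.\ point. Your handling of Property (3) is also slightly more careful than the paper's: the paper takes $K$ to be the closed covering stripe itself, via formula \eqref{eq_univariate}, which is a genuine neighborhood of $p$ only when $p$ is interior to the union of stripes --- a point left implicit there; your pre-fattening of the stripes before running the uncrossing/pushing procedure is a legitimate fix, since inclusion \eqref{eq_stillcover} shows that the procedure only enlarges the covered set and hence preserves $E^1 \subset \inter(S)$. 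Two small details you should still write out: constancy of $\varphi_{1,j}$ on vertical segments of $K$ from the a.e.\ vanishing of $\partial_{x_2}\varphi_{1,j}$ requires a Fubini-plus-continuity step (or the exact local formula inside each stripe); and your reading of the $i=2$ case, with target coordinate $x_1$ rather than $x_2$, is indeed the intended interpretation of the statement, being what the proof of Lemma \ref{lemma_linrel} actually uses.
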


To simplify the proof, we divide it into cases of increasing geometric complexity.

\begin{proof}
By Theorems \ref{thm_covering} and \ref{thm_disjointcovering} we have $E = E^1 \cup E^2$, where each $E^i$ has the following properties: given $j \in \N$, there are numbers $N \in \N$ and $\d > 0$ and $x_i$-stripes $\{\n^i(f_{l,j};\d_j)\}_{l=1}^N$ so that

\begin{itemize}
\item[($1'$)] 
$E^i \subset \bigcup_{l=1}^{N_j} \n^i(f_{l,j};\d)$ and $N \cdot \d < 2^{-j}$;
\item[($2'$)] for $l \neq l'$, the interiors of $\n^i(f_{l,j};\d)$ and $\n^i(f_{l',j};\d)$ are disjoint;
\item[($3'$)] each $f_{l,j}$ is a piecewise-linear function, with finitely many points of non-differentiablity.
\end{itemize}

For simplicity, let $i=1$ and $E \subset [0,1]^2$.  For each $l$, put $\n^1_l := \n^1(f_{l,j};\d)$.  To emphasize the dependence on $j$, put $\m_j := \R^2 \setminus \bigcup_l \n^1_l$. Now consider the functions 
$$
\varphi_{1,j}(p) \;:=\; \int_{\{p_1\} \times [0,p_2]} \chi_{\m_j} \,d\H^1
$$
where $p = (p_1,p_2) \in \R^2$.  Property $(1')$ then implies that
$$
0 \;\leq\;
p_2 - \varphi_{1,j}(p_1) \;\leq\;
\sum_{j=1}^N \int_{\{p_1\} \times \R} \chi_{\n^1_l} \,d\H^1 \;=\;
N \cdot \d \;<\; 2^{-j}
$$
from which we obtain Property (1).

\begin{claim} \label{claim_uniflip}
The sequence $\{\varphi_{1,j}\}_{j=1}^\infty$ is uniformly $3$-Lipschitz.
\end{claim}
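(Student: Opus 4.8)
The plan is to estimate the horizontal and vertical Lipschitz behavior of $\varphi_{1,j}$ separately and then recombine them by the triangle inequality, taking care that neither estimate depends on the number $N$ of stripes or on the thickness $\d$, so that the resulting constant is uniform in $j$. First I would dispose of the vertical direction, which is immediate: for fixed $p_1$ the function $p_2 \mapsto \varphi_{1,j}(p_1,p_2)$ is the integral of the bounded measurable function $\chi_{\m_j}(p_1,\cdot)$, so by the fundamental theorem of calculus its derivative is $\chi_{\m_j}(p_1,p_2) \in \{0,1\}$ almost everywhere, whence
\[
|\varphi_{1,j}(p_1,p_2) - \varphi_{1,j}(p_1,p_2')| \;\leq\; |p_2 - p_2'|,
\]
i.e.\ $\varphi_{1,j}(p_1,\cdot)$ is $1$-Lipschitz.

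The horizontal direction is the crux. For fixed $p_2$, write $I_l(t) := [f_{l,j}(t) - \d/2,\, f_{l,j}(t) + \d/2]$ for the vertical cross-section of the $l$-th stripe at horizontal coordinate $t$, and set $g(u) := m_1\bigl([0,p_2] \cap [u - \d/2,\, u + \d/2]\bigr)$, so that the covered length contributed by stripe $l$ is $\psi_l(t) := g(f_{l,j}(t))$. Because the covering stripes have pairwise-disjoint interiors by Theorem \ref{thm_disjointcovering} (property $(2')$), for each fixed $t$ the open cross-sections are disjoint, which forces the centers $\{f_{l,j}(t)\}_l$ to be pairwise at least $\d$ apart and makes the intervals $I_l(t)$ essentially disjoint. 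Consequently
\[
\varphi_{1,j}(t,p_2) \;=\; p_2 - \sum_{l=1}^N \psi_l(t).
\]
A direct computation shows $g$ is piecewise linear with $g'(u) \in \{-1,0,1\}$, and $g'(u) \neq 0$ only when the interval $[u-\d/2,u+\d/2]$ straddles the endpoint $0$ or the endpoint $p_2$ of $[0,p_2]$; since each $f_{l,j}$ is $1$-Lipschitz and piecewise linear (property $(3')$), each $\psi_l$ is piecewise linear with $|\psi_l'(t)| \leq |g'(f_{l,j}(t))|$.

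The hard part is to control the sum of these slopes uniformly in $N$, and this is exactly where disjointness pays off. At a given $t$, the summand $\psi_l$ has nonzero slope only if $f_{l,j}(t)$ lies in $(-\d/2,\d/2)$ (so that $I_l(t)$ straddles $0$) or in $(p_2 - \d/2,\, p_2 + \d/2)$ (so that $I_l(t)$ straddles $p_2$). Each of these open intervals has length $\d$, and since the centers $\{f_{l,j}(t)\}_l$ are pairwise at least $\d$ apart, an open interval of length $\d$ can contain at most one of them; hence at most one stripe straddles $0$ and at most one straddles $p_2$. Thus for a.e.\ $t$ at most two summands have nonzero slope, each of absolute value at most $1$, giving $\bigl|\tfrac{d}{dt}\varphi_{1,j}(t,p_2)\bigr| \leq 2$. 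As $\varphi_{1,j}(\cdot,p_2)$ is piecewise linear with finitely many breakpoints, it is $2$-Lipschitz.

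Combining the two estimates through an intermediate point yields, for all $p = (p_1,p_2)$ and $q = (q_1,q_2)$,
\[
|\varphi_{1,j}(p) - \varphi_{1,j}(q)| \;\leq\; 2|p_1 - q_1| + |p_2 - q_2| \;\leq\; 3|p - q|,
\]
uniformly in $j$, which is the claim. I expect the genuine obstacle to be the counting step: without the disjoint-interiors conclusion of Theorem \ref{thm_disjointcovering}, many stripe boundaries could cross the endpoints $0$ and $p_2$ at the same horizontal position, and one would only obtain a slope bound proportional to $N$, which diverges as $j \to \infty$; the disjointness is precisely what pins the count at two and makes the constant $3$ independent of $j$.
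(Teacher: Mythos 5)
Your proof is correct, uniform in $j$ as required, and genuinely different from the paper's argument. The paper proves Claim \ref{claim_uniflip} by a case analysis on the relative positions of $p$ and $q$ (same vertical line; same stripe; same gap between consecutive stripes; general position), evaluating $\varphi_{1,j}$ at stripe-boundary points by counting the $l-1$ stripes below --- a step that uses the monotone ordering \eqref{eq_curveorder} of the uncrossed curves --- and then handling arbitrary $p,q$ by projecting onto a separating boundary curve and invoking the interval inequality of Claim \ref{claim_intervals}; this assembles the same final bound $|p_2-q_2|+2|p_1-q_1|\le 3|p-q|$. You instead split off the trivial vertical $1$-Lipschitz bound and prove a horizontal $2$-Lipschitz bound by differentiating the explicit formula $\varphi_{1,j}(t,p_2)=p_2-\sum_l m_1\bigl([0,p_2]\cap I_l(t)\bigr)$; both the validity of that formula and your key count --- at most one stripe straddles $0$ and at most one straddles $p_2$ at a.e.\ $t$, because the centers $f_{l,j}(t)$ are pairwise $\d$-separated --- come from the disjoint-interiors conclusion of Theorem \ref{thm_disjointcovering} alone, so you never need the ordering \eqref{eq_curveorder}. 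Your route is more economical, makes the origin of the constant $3=1+2$ transparent, and handles stripes only partially contained in the segment $\{p_1\}\times[0,p_2]$ automatically, a configuration the paper's counting formula \eqref{eq_countstripes} passes over silently. What the paper's case analysis buys in exchange is the pointwise description of $\varphi_{1,j}$ on each stripe (formula \eqref{eq_univariate}), which is not merely a step in the Lipschitz proof but is reused afterwards: it yields Property (3) of Lemma \ref{lemma_approxcoord} and is exactly what gets differentiated in the proof of Lemma \ref{lemma_linrel}; with your approach that formula would still need to be extracted separately (it does follow from your decomposition restricted to a single stripe). Two small points to tidy: adopt the obvious sign convention in your cross-section formula when $p_2<0$, and read the chain-rule bound $|\psi_l'(t)|\le|g'(f_{l,j}(t))|$ only away from the finitely many corners of the piecewise-linear functions involved --- harmless, since each $\varphi_{1,j}(\cdot,p_2)$ is Lipschitz and hence recovered by integrating its a.e.\ derivative.
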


%To begin, 
Let $p = (p_1,p_2)$ and $q = (q_1,q_2)$ be %arbitrary 
points in $\R^2$.  We argue by cases.

\emph{Case A: $p$ and $q$ lie on the same vertical line}.  %Since $\varphi_{1,j}$ is an indefinite integral of a characteristic function, it
By construction, $\varphi_{1,j}$ is $1$-Lipschitz in the variable $x_2$.  The claim then follows from %the estimate
\begin{equation} \label{eq_sameline}
|\varphi_{1,j}(p) - \varphi_{1,j}(q)| \;\leq\;
|p_2 - q_2| \;\leq\; |p- q|.
\end{equation}

\emph{Case B: $p$ and $q$ lie on the same stripe $\n^1_l$}.
Since $\varphi_{1,j}$ is constant on each of the vertical segments
$\n^1_l \cap (\{p_1\} \times \R)$ and
$\n^1_l \cap (\{q_1\} \times \R)$, the corresponding (lower) endpoints
$p' = (p_1, f_{l,j}(p_1) - \d/2)$ and
$q' = (q_1, f_{l,j}(q_1) - \d/2)$ satisfy
\begin{equation} \label{eq_samesegment}
\varphi_{1,j}(p) \;=\; \varphi_{1,j}(p')
\; \textrm{ and } \;
\varphi_{1,j}(q) \;=\; \varphi_{1,j}(q').
\end{equation}
By Property $(2')$, the interiors of $\{\n^1_l\}_{l=1}^N$ are pairwise disjoint.  A ray with initial point $p'$ and direction $-\vec{e}_2$ crosses through $l-1$ stripes of thickness $\d$, so 
\begin{equation} \label{eq_countstripes}
\varphi_{1,j}(p') \;=\; f_{l,j}(p_1) - (l-1)\d  \,\text{ and }\,
\varphi_{1,j}(q') \;=\; f_{l,j}(q_1) - (l-1)\d.
\end{equation}
Since $L(f_{l,j}) \leq 1$, the claim follows from equations \eqref{eq_samesegment} and \eqref{eq_countstripes}:
\begin{equation} \label{eq_samestripe}
\left.\begin{split}
\hspace{.25in}
|\varphi_{1,j}(p) - \varphi_{1,j}(q)| &=\;
|\varphi_{1,j}(p') - \varphi_{1,j}(q')| \\ &=\;
|f_{l,j}(p_1) - f_{l,j}(q_1)| \;\leq\;
|p_1 - q_1| \;\leq\;
|p - q|.
\hspace{.25in}
\end{split}\right\}
\end{equation}

\emph{Case C: $p, q \notin \m_j$, and both points lie between the same pair of stripes}.
The argument is similar to Case B.  If $p$ and $q$ lie between $\n^1_{l+1}$ and $\n^1_l$, for some $l$, then put $p'' := (p_1, f_{l,j}(p_1) + \d/2)$ and $q'' := (q_1, f_{l,j}(q_1) + \d/2)$.  From the observation
$$
\varphi_{1,j}(p) \;=\; p_2'' - l \cdot \d \; \textrm{ and } \;
\varphi_{1,j}(q) \;=\; q_2'' - l \cdot \d,
$$
we obtain estimates \eqref{eq_countstripes} and \eqref{eq_samestripe} as before.

\emph{Case D: $p$ and $q$ are arbitrary}.
Suppose that $p$ and $q$ are separated by a boundary curve $\a$ of some stripe $\n^1_l$.  Without loss of generality, let
$$
\a \;=\;
\{ (x_1,x_2) \,:\, x_2 = f_{l,j}(x_1) + \d/2 \}
$$
and moreover, assume $p$ lies below $\a$ and $q$ lies above $\a$:
$$
\left\{\begin{split}
p_2' &:=\; f_{l,j}(p_1) + \d/2 \;\geq\; p_2, \\
q_2' &:=\; f_{l,j}(q_1) + \d/2 \;\leq\; q_2.
\end{split}\right.
$$
Note that $p' := (p_1,p_2')$ and $q' := (q_1,q_2')$ lie on the same vertical lines as $p$ and $q$, respectively, and both $p'$ and $q'$ lie on $\a$.  

\begin{figure}[htp]
\centering
\begin{pspicture}(0,0.5)(6,2.7)
%BEFORE BUMP
\psline[](1,1.5)(3,2.5)(5,1.5)
\psline[](1,0.5)(3,1.5)(5,0.5)
%\psline[linestyle=dashed](1,1)(3,2)(5,1)
\put(.7,1.5){$\a$}
%\put(4.5,1){$\n^l_1$}
\psline[linecolor=red](2,2)(2,0.5)
\pscircle[linecolor=red](2,2){.07}
\pscircle[linecolor=red](2,0.5){.07}
\put(2.15,0.5){\textcolor{red}{$p$}}
\put(2.15,1.85){\textcolor{red}{$p'$}}
\psline[linecolor=red](4.5,1.75)(4.5,2.5)
\pscircle[linecolor=red](4.5,1.75){.07}
\pscircle[linecolor=red](4.5,2.5){.07}
\put(4.65,2.5){\textcolor{red}{$q$}}
\put(4.65,1.85){\textcolor{red}{$q'$}}
\end{pspicture}
\caption{A possible configuration for $p$, $q$, $p'$, and $q'$.}
\end{figure}
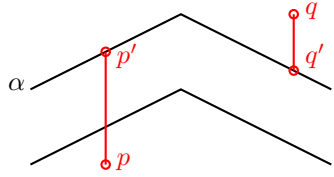

Using the Triangle Inequality and inequalities \eqref{eq_sameline} and \eqref{eq_samestripe}, %we estimate:
\begin{equation} \label{eq_almostuniflip}
\left.\begin{split}
|\varphi_{1,j}(p) - \varphi_{1,j}(q)| \;\leq\; &\;
|\varphi_{1,j}(p) - \varphi_{1,j}(p')| +
|\varphi_{1,j}(p') - \varphi_{1,j}(q')| \\ &\; +
|\varphi_{1,j}(q') - \varphi_{1,j}(q)| \\ \;\leq\; &\;
|p_2 - p_2'| + |f_{l,j}(p_1) - f_{l,j}(q_1)| + |q_2 - q_2'| \\ \;\leq\; &\;
|p_2 - p_2'| + 1 \cdot |p_1 - q_1| + |q_2 - q_2'|.
\end{split}\right\}
\end{equation}

\begin{claim} \label{claim_intervals}
For all choices of $p_2 \leq p_2'$ and $q_2' \leq q_2$, we have
$$
|p_2 - p_2'| + |q_2' - q_2| \;\leq\;
|p_2 - q_2| + |p_2' - q_2'|.
$$
\end{claim}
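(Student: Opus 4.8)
The plan is to reduce the claim to two applications of the elementary bound $|x| \ge x$, with no case analysis required. First I would use the two sign hypotheses to discharge the absolute values on the left-hand side: since $p_2 \le p_2'$ and $q_2' \le q_2$, we have $|p_2 - p_2'| = p_2' - p_2$ and $|q_2' - q_2| = q_2 - q_2'$, so the left-hand side equals $(p_2' - p_2) + (q_2 - q_2')$.

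Next I would regroup this sum as $(q_2 - p_2) + (p_2' - q_2')$, pairing each summand with one of the two absolute values appearing on the right. The purpose of this regrouping is to expose exactly the quantities that the right-hand side dominates, namely $q_2 - p_2$ and $p_2' - q_2'$.

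Finally I would apply the trivial estimates $|p_2 - q_2| \ge q_2 - p_2$ and $|p_2' - q_2'| \ge p_2' - q_2'$, each an instance of $|x| \ge x$. Adding them gives $|p_2 - q_2| + |p_2' - q_2'| \ge (q_2 - p_2) + (p_2' - q_2')$, which is precisely the asserted inequality after the rewriting of the left-hand side above.

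As for the main obstacle, there is honestly none of real substance: the entire content lies in choosing the regrouping so that each right-hand term dominates its matching left-hand term, and the only point demanding attention is the correct removal of the absolute values on the left using the two ordering hypotheses $p_2 \le p_2'$ and $q_2' \le q_2$. Once that is done the inequality is immediate and, notably, needs no distinction of cases on the relative positions of the four points. One could alternatively read the statement geometrically as a comparison of the lengths of the intervals $[p_2,p_2']$ and $[q_2',q_2]$ against the ``cross'' distances, but the direct computation above is shorter and sidesteps any casework.
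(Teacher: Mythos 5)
Your proof is correct, and it is genuinely simpler than the one in the paper. The paper proves Claim \ref{claim_intervals} by a three-way case analysis on the intervals $I_p$ and $I_q$ (Subcase D1: disjoint; Subcase D2: one nested in the other; Subcase D3: overlapping without containment), in each case invoking either a containment estimate or the inclusion--exclusion identity $m_1(I_p) + m_1(I_q) = m_1(I_p \cup I_q) + m_1(I_p \cap I_q)$ for interval lengths. You replace all of this with a single algebraic regrouping: the hypotheses $p_2 \leq p_2'$ and $q_2' \leq q_2$ give
$$
|p_2 - p_2'| + |q_2' - q_2| \;=\; (p_2' - p_2) + (q_2 - q_2') \;=\; (q_2 - p_2) + (p_2' - q_2') \;\leq\; |p_2 - q_2| + |p_2' - q_2'|,
$$
where the last step is just $|x| \geq x$ applied twice. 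This is shorter, needs no picture of how the intervals sit relative to one another, and as a bonus makes it evident exactly when the inequality is strict (namely when one of the cross differences $q_2 - p_2$ or $p_2' - q_2'$ is negative). The paper's interval-measure argument offers a geometric interpretation of the same fact, but yours is the cleaner route to the stated inequality.
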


The argument is combinatorial, so we further proceed by sub-cases.  Consider intervals $I_p := [p_2',p_2]$ and $I_q := [q_2,q_2']$

\emph{Subcase D1: $I_p$ and $I_q$ are disjoint}.
Relative to $p_2 \leq q_2$ or $q_2 \leq p_2$, the union $[p_2,p_2'] \cup [q_2',q_2]$ lies in either $[p_2,q_2]$ or $[q_2',p_2']$.  The claim then follows from
$$
|p_2 - p_2'| + |q_2' - q_2| \;\leq\;
|p_2 - q_2| \vee |p_2' - q_2'| \;\leq\;
|p_2 - q_2| + |p_2' - q_2'|.
$$

\emph{Subcase D2: $I_p \subset I_q$}.
Under this set inclusion, we have the identities
$$
I_q \;=\; [p_2',q_2'] \cup [p_2,q_2], \; \;
I_p \;=\; [p_2',q_2'] \cap [p_2,q_2]
$$
from which we obtain the claim, also as an identity:
\begin{eqnarray*}
|p_2 - p_2'| + |q_2' - q_2| &=&
m_1(I_p) + m_1(I_q) \\ &=&
m_1\big([p_2',q_2'] \cup [q_2,p_2]\big) + m_1\big([p_2',q_2'] \cap [q_2,p_2]\big) \\ &=&
|p_2 - q_2| + |p_2' - q_2'|.
\end{eqnarray*}
By symmetry, the claim also holds for %the sub-case 
$I_q \subset I_p$.
%$[q_2,q_2'] \subset [p_2',p_2]$.

\emph{Subcase D3: $I_p \not\subset I_q$, $I_p \not\subset I_p$, and $I_p \cap I_q \neq \emptyset$}.  %As a final case, suppose that %$[p_2',p_2]$ and $[q_2,q_2']$, $I_p$ and $I_q$ are not disjoint and that neither interval is contained in the other.  This means that o
Of the intervals $[q_2,p_2]$ and $[p_2',q_2']$, one is %the union %$[p_2',p_2] \cup [q_2,q_2']$
$I_p \cup I_q$ and the other is %the intersection %$[p_2',p_2] \cap [q_2,q_2']$.
$I_p \cap I_q$.  We then compute 
\begin{eqnarray*}
|p_2 - p_2'| + |q_2' - q_2| &=&
m_1(I_p) + m_1(I_q) \\ &=&
m_1(I_p \cup I_q) + m_1(I_p \cap I_q) \;=\;
|p_2 - q_2| + |q_2' - p_2'|.
\end{eqnarray*}
This proves Claim \ref{claim_intervals}.  Using this and inequality \eqref{eq_almostuniflip}, 
 Claim \ref{claim_uniflip} follows from
\begin{eqnarray*}
|\varphi_{1,j}(p) - \varphi_{1,j}(q)| &\leq&
|p_2 - p_2'| + |p_1 - q_1| + |q_2 - q_2'| \\ &\leq&
|p_2 - q_2| + |p_1 - q_1| + |p_2' - q_2'| \\ &\leq&
|p_2 - q_2| + 2 \cdot |p_1 - q_1| \;\leq\;
3 \cdot |p - q|.
\end{eqnarray*}
Let $O$ be any connected component of $\m_j$.  From the argument in Case C, the restriction $\varphi_{1,j}|O$ is a translate of $x_2$, so $\varphi_{i,j}|O$ is piecewise linear.  On the other hand, by Property $(3')$, each $f_{l,j}$ is piecewise-linear, so by Case B, the restriction of $\varphi_{i,j}$ to any $x_1$-stripe $\n_j^1$ is also piecewise linear.  Both types of sets $O$ and $\n_j^1$ partition $\R^2$, so $\varphi_{1,j}$ must be piecewise-linear on all of $\R^2$.  This gives Property (2).

Lastly, recall from Case B that for all stripes $\n^1_l$, we have
\begin{equation} \label{eq_univariate}
\varphi_{1,j}(p) \;=\; f_{l,j}(p_1) + (l-1)\d.
\end{equation}
for all $p = (p_1,p_2) \in \n^1_l$.
This gives Property (3): for all $p \in E^1$, there is a \emph{closed} neighborhood $K$ containing $p$ so that $\varphi_{1,j}|K$ depends only on the variable $x_1$.
%\end{rmk}
\end{proof}

\subsection{Linearly Independent Derivations on $\R^2$}

Using the approximating sequence from Lemma \ref{lemma_approxcoord}, we proceed to a linear dependence relation for derivations (with respect to singular measures).

\begin{lemma} \label{lemma_linrel}
Let $\mu$ be a singular Radon measure on $\R^2$, and let $E$ be a subset on which $\mu$ is concentrated.  There exist $F^1, F^2 \subset \R^2$ and $g_1, g_2 \in L^\infty(\R^2,\mu)$ so that $E = F^1 \cup F^2$ and so that, for all $\d \in \U(\R^2,\mu)$,
\begin{equation} \label{eq_linrel}
\left\{
\begin{split}
\d x_2 &=\; %&=&
g_1 \cdot \d x_1 \;\; \mu\textrm{-a.e.\ on } F_1, \\
\d x_1 &=\; %&=&
g_2 \cdot \d x_2 \;\; \mu\textrm{-a.e.\ on } F_2.
\end{split}
\right.
\end{equation}
%hold, .
\end{lemma}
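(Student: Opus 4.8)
The plan is to realize the two relations in \eqref{eq_linrel} as weak-$*$ limits of the explicit univariate structure furnished by Lemma \ref{lemma_approxcoord}. Since $\mu$ is singular, it is concentrated on a null set $E$, and by inner regularity of the Radon measure $\mu$ I would first reduce to the case where $E$ is a compact null set: choosing compact sets $K_m\subset E$ with $\mu\bigl(E\setminus\bigcup_m K_m\bigr)=0$ (localizing to balls if $\mu(E)=\infty$), it suffices to treat each $K_m$ separately and then patch. For a fixed compact null set I apply Theorems \ref{thm_covering} and \ref{thm_disjointcovering} together with Lemma \ref{lemma_approxcoord} to obtain the decomposition $E=E^1\cup E^2$ and the $3$-Lipschitz, piecewise-linear functions $\varphi_{1,j}\to x_2$ (and, interchanging the two coordinates, functions $\varphi_{2,j}\to x_1$), and I set $F^1:=E^1$ and $F^2:=E^2$.

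The central step is the identity
$$
\chi_{E^1}\,\d\varphi_{1,j} \;=\; \chi_{E^1}\,s_j\,\d x_1
\qquad\text{for every }\d\in\U(\R^2,\mu),
$$
where $s_j\in\Li(\R^2,\mu)$ satisfies $\|s_j\|_\infty\le 1$ and is \emph{independent of} $\d$. To see this, fix $p\in E^1$; by Property (3) of Lemma \ref{lemma_approxcoord} there is a closed neighborhood $K$ of $p$ on which $\varphi_{1,j}$ is a function of $x_1$ alone, say $\varphi_{1,j}|K=h(x_1)$. Locality (Theorem \ref{thm_locality}) lets me replace $\varphi_{1,j}$ on $K$ by the global univariate function $h(x_1)$, and the Chain Rule (Lemma \ref{lemma_chainrule}) applied to $h(x_1)$ produces no $\d x_2$-term, so $\chi_K\d\varphi_{1,j}=\chi_K\,g^1_h\,\d x_1$. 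Covering $E^1$ by countably many such neighborhoods (using that $\R^2$ is second countable) and invoking \eqref{eq_univariate}, I assemble these local coefficients into a single bounded function $s_j$. The point I would stress is that the coefficients are \emph{geometric}: on each covering stripe $\varphi_{1,j}$ is affine off a finite union of vertical lines, so $s_j$ equals the slope $f'_{l,j}(x_1)\in[-1,1]$ of the parametrizing curve, and both this slope and the Chain-Rule coefficient at the kinks are manufactured from $\varphi_{1,j}$ alone --- fixing once and for all the weak-$*$ convergent subsequence of mollified partials in the proof of Lemma \ref{lemma_chainrule} makes $g^1_h$ valid simultaneously for all $\d$.

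Now I pass to the limit, and here lies the main obstacle: the coefficients $g_1,g_2$ must be the \emph{same} for every derivation $\d$. I resolve this by extracting limits from the $\d$-free objects first. Since $\|s_j\|_\infty\le 1$, the Banach--Alaoglu theorem yields a subsequence with $s_{j_k}\wsto g_1$ in $\Li(\R^2,\mu)$; this $g_1$ depends only on the functions $s_j$, hence not on $\d$. Fix any $\d\in\U(\R^2,\mu)$. Because $\varphi_{1,j}\to x_2$ pointwise with a uniform Lipschitz bound (truncating to a bounded function on the compact region, or using the extension $\bar\d$ of Theorem \ref{thm_extlip}), Lemma \ref{lemma_weakstarlip} gives $\varphi_{1,j}\wsto x_2$ and continuity of $\d$ gives $\d\varphi_{1,j_k}\wsto\d x_2$; on the other hand, multiplication by the fixed functions $\chi_{E^1}$ and $\d x_1$ in $\Li$ preserves weak-$*$ convergence, so $\chi_{E^1}s_{j_k}\,\d x_1\wsto\chi_{E^1}g_1\,\d x_1$. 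Uniqueness of weak-$*$ limits then yields $\d x_2=g_1\,\d x_1$ $\mu$-a.e.\ on $E^1=F^1$. Interchanging the roles of $x_1$ and $x_2$ and of $E^1$ and $E^2$ gives, from the functions $\varphi_{2,j}\to x_1$, the symmetric relation $\d x_1=g_2\,\d x_2$ $\mu$-a.e.\ on $E^2=F^2$. Finally I return to the general case by setting $F^1:=\bigl(\bigcup_m K_m^1\bigr)\cup\bigl(E\setminus\bigcup_m K_m\bigr)$ and $F^2:=\bigcup_m K_m^2$, and by defining $g_1$ on the disjointified pieces $K_m^1\setminus\bigcup_{m'<m}K_{m'}^1$ (and $g_2$ analogously) from the $\d$-independent coefficients $g_i^{(m)}$ produced above; since each relation holds on all of $K_m^i$, it persists on these pieces, yielding global functions $g_1,g_2\in\Li(\R^2,\mu)$ with the stated properties.
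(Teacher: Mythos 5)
Your proposal is correct and follows essentially the same route as the paper's proof: both run the weak-$*$ limit argument on the approximating functions of Lemma \ref{lemma_approxcoord}, derive a $\d$-independent identity $\chi_{E^1}\,\d\varphi_{1,j}=\chi_{E^1}\,s_j\,\d x_1$ whose coefficients are the stripe slopes, extract $g_1$ by Banach--Alaoglu from these $\d$-free coefficients, conclude by continuity of derivations and uniqueness of weak-$*$ limits, and treat non-compact $E$ by inner regularity and patching. The only divergence is local: where you absorb the kinks of $\varphi_{1,j}$ into the ($\d$-free, mollification-built) Chain-Rule coefficient of the locally univariate function, the paper instead excises the non-differentiability set --- the vertical segments $\ell$ and the stripe boundaries $\Gamma$ --- via two preliminary claims resting on Lemmas \ref{lemma_curveintersect}, \ref{lemma_pure1unrect}, and \ref{lemma_1rect}; both treatments are sound, and yours slightly reduces the dependence on the rectifiability machinery of Section \ref{sect_1dim}.
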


\begin{proof}
We proceed by cases.

Case 1: assume that $E$ is compact, so by Lemma \ref{lemma_approxcoord} there exist piecewise-linear, $3$-Lipschitz functions $\{\varphi_{1,j}\}_{j=1}^\infty$ and $\{\varphi_{2,j}\}_{j=1}^\infty$ that satisfy Properties (1) and (2) of Lemma \ref{lemma_approxcoord}.  Assume again that $i = 1$ and put $\varphi_j := \varphi_{1,j}$.

From the proof of Lemma \ref{lemma_approxcoord}, each $\varphi_j$ is formed from a covering of $E^1$ by $x_1$-stripes $\{\n^1(f_l;\d)\}_{l=1}^N$ with $N \cdot \d < 2^{-j}$.  Moreover, the set of non-differentiability of $\varphi_j$ consists of two parts:

\begin{enumerate}
\item a finite union of $x_1$-stripe boundaries, written $\Gamma := \bigcup_l \partial\n^1(f_l;\d)$;
\item a finite union of vertical line segments, written 
$\ell := \bigcup_k \ell_k$, where each %image set $\proj_{\R \times 0}(\ell_k)$ is
segment $\ell_l$ projects to 
a point of non-differentiability of $f_l$.%, for some $l$.
\end{enumerate}

Let $\d_1, \d_2 \in \U(\R^2,\mu)$ be arbitrary and %.  As a shorthand, 
write $\n^1_l := \n^1(f_l;\d)$.  Several reductions follow, which we state below as claims.

\begin{claim}
Lemma \ref{lemma_linrel} is true for $\mu(\R^2 \setminus \ell) = 0$.
\end{claim}

We may assume that $\mu(\ell) > 0$, % and hence 
so $\chi_\ell \neq 0$.  Since $\H^1(E^1 \cap \ell_k) = 0$ holds for each $k$, it follows from Lemma \ref{lemma_curveintersect} that $E^1 \cap \ell$ is purely $1$-unrectifiable.  Moreover, by Theorem \ref{lemma_pure1unrect}, we have $\chi_\ell\d = 0$, for all $\d \in \U(\R^2,\mu)$, from which we obtain $\chi_\ell(\d_1 + \d_2) = 0$.  This proves the claim.

\begin{claim}
Lemma \ref{lemma_linrel} is true for $\mu(\R^2 \setminus \Gamma) = 0$.
\end{claim}

For each $j \in \N$, let $S_l := \partial\n^1(f_l;\d)$, so $\varphi_j$ is non-differentiable on $S_l$.  In particular, every such $S_l$ is a Lipschitz curve, hence $1$-rectifiable, so by Theorem \ref{lemma_1rect} the module $\U(S_l,\mu)$ has rank-$1$.  This means that the set $\{\chi_{S_l}\d_i\}_{i=1}^2$ is linearly dependent, and by Lemma \ref{lemma_lindepsubset}, so is $\{\chi_\Gamma\d_i\}_{i=1}^2$.  The claim follows.

Without loss of generality, assume that $E^1 = E^1 \setminus (\ell \cup \Gamma)$.  By Theorem \ref{thm_disjointcovering}, $f_l$ is piecewise-linear and $f_l'(\R \setminus \proj_{\R \times 0}(\ell))$ is a finite set in $\R$.  We may then cover $E^1$ by a finite union of sets of the form
$$
\n^1_l(\xi) \;:=\; \{ p \in \n^1_l \,:\, f_l'(p_1) = \xi \}.
$$
Note that the restriction of $\varphi_{1,j}$ to the interior of $\n^1_l(\xi)$ is linear and hence smooth.  From formulas \eqref{eq_chainrule} and \eqref{eq_univariate}, we then obtain the $\mu$-a.e.\ identity
$$
\chi_{\n^1_l(\xi)} \cdot \d \varphi_{1,j} \;=\;
\chi_{\n^1_l(\xi)} \cdot f_l' \cdot \d x_1,
$$
for all $l$ and all $\xi$.  Putting
$G_j^1 := \sum_{l=1}^N \chi_{\n^1_l} \cdot f_l'$,
we further obtain
$$
\d \varphi_{1,j} \;=\; G_j^1 \cdot \d x_1
$$
$\mu$-a.e.\ on $E^1$.  Clearly $\|G_j^1\|_{\infty,\,\mu} \leq 1$ holds for each $j$, and by the Banach-Alaoglu Theorem 
there is a weak-$*$ convergent subsequence $\{G_{j_m}^1\}_{m=1}^\infty$ in $L^\infty(\R^2,\mu)$; let $G^1$ be the weak-$*$ limit.  For all $\d \in \U(\R^2,\mu)$ we also have %the convergence
$$
G_{j_m}^1 \cdot \d x_1 \,\wsto\, G^1 \cdot \d x_1 \textrm{ in } L^\infty(\R^2,\mu).
$$
However, by Property (1) of Lemma \ref{lemma_approxcoord}, we have $\varphi_{1,j} \wsto x_2$ in $\Lip_b(\R^2)$ and by continuity of $\d$, we obtain $\d\varphi_{1,j} \wsto \d x_2$ in $L^\infty(\R^2,\mu)$ for all $\d$.  Putting $F^1 := E^1$ and $g_1 := G^1$, formula \eqref{eq_linrel} follows from uniqueness of limits.

Case 2: For non-compact  $E$, consider subsets in $\R^2$ of the form
$$
Q_{ab} \;:=\; [a,a+1) \times [b,b+1), \; a,b \in \Z.
$$
Indeed, each $Q = Q_{ab}$ is bounded and therefore has finite $\mu$-measure.  From the Borel regularity of $\mu$, there are sequences of compact sets $\{K_c\}_{c=1}^\infty$ so that
$$
\lim_{c \to \infty} \mu\big( (E \cap Q) \setminus K_c \big) \;=\; 0.
$$
Since $K_c$ is compact, there exist subsets $F_c^1$, $F_c^2$ in $\R^2$ so that
$$
K_c \;=\; F_c^1 \cup F_c^2
$$
and there exist functions $G_c^1 \in L^\infty(\R^2,\mu)$ so that the identity
\begin{equation} \label{eq_almostlinrel}
\chi_{K_c} \cdot \d x_2 \;=\; \chi_{K_c} \cdot G_c^1 \cdot \d x_1
\end{equation}
holds $\mu$-a.e.\ on $F_c^1$, for all $\d \in \U(\R^2,\mu)$.  We now put
\begin{eqnarray*}
F^i &:=& \bigcup_{abc} F_c^i, \textrm{ for } i = 1, 2, \\
E_c &:=&
\left\{\begin{array}{ll}
K_1, & c = 1 \\
K_c \setminus K_{c-1}, & c \geq 2,
\end{array}\right.\\
g_1 &:=&
\sum_c \chi_{E_c} \cdot G_c^1.
\end{eqnarray*}
Clearly, $E = \bigcup_c E_c$ and $E = F^1 \cup F^2$.  From formula \eqref{eq_almostlinrel} and from the definitions of $E_c$ and $g_1$, we also obtain formula \eqref{eq_linrel} for $i = 1$.  The lemma follows.
\end{proof}

%\textcolor{red}{Add a little preamble, here?}

\begin{proof}[Proof of Theorem \ref{thm_2dimLI}]
If $\mu$ is a singular Radon measure on $\R^2$, then let $E$ be a null set on which $\mu$ is concentrated.  Let $\d_1, \d_2 \in \U(\R^2,\mu)$ be arbitrary.

By Theorem \ref{lemma_linrel}, there are subsets $F^1$ and $F^2$ so that $E = F^1 \cup F^2$ and there are functions $g_1, g_2 \in L^\infty(\R^2,\mu)$ so that the system of equations \eqref{eq_linrel} holds $\mu$-a.e.\ for $\d_1$ and for $\d_2$.  Now consider 
\begin{equation} \label{eq_2dimscalars}
\left\{\begin{split}
\lambda_1 &:=\;
\chi_{F^1} \cdot \d_2 x_1 \,+\,
\chi_{F^2} \cdot \d_2 x_2, \\
\lambda_2 &:=\;
\chi_{F^1} \cdot \d_1 x_1 \,+\,
\chi_{F^2} \cdot \d_1 x_2.
\end{split}\right.
\end{equation}
We first observe that, for $\mu$-a.e.\ $p \in F^1$, we have the identities
\begin{eqnarray*}
\chi_{F^1} \cdot \big(\lambda_1 \d_1 - \lambda_2 \d_2\big)x_1 &=&
\chi_{F^1} \cdot \big(
\d_2x_1 \cdot \d_1x_1 - \d_1x_1 \cdot \d_2x_1 \big) \;=\; 0, \\
\chi_{F^1} \cdot \big( \lambda_1 \d_1 - \lambda_2 \d_2 \big)x_2 &=&
\chi_{F^1} \cdot \big( \d_2x_1 \cdot \d_1x_2 - \d_1x_1 \cdot \d_2x_2 \big)\\ &=&
\chi_{F^1} \cdot \big(
\d_2x_1 \cdot g_1 \cdot \d_1x_1 -
\d_1x_1 \cdot g_1 \cdot \d_2x_1
\big) \;=\; 0.
\end{eqnarray*}
Arguing similarly for $F^2$, we see that $\lambda_1\d_1 - \lambda_2\d_2$ annihilates both $x_1$ and $x_1$.  By Lemma \ref{lemma_chainrule}, it follows that $\lambda_1\d_1 - \lambda_2\d_2 = 0$.

Now suppose that both $\lambda_1$ and $\lambda_2$ are zero.  By equations \eqref{eq_linrel} and \eqref{eq_2dimscalars}, the four functions $\d_1x_1$, $\d_1x_2$, $\d_2x_1$, and $\d_2x_2$ would all be zero, which implies that $\d_1 = \d_2 = 0$.  This is a contradiction, so either $\lambda_1 \neq 0$ or $\lambda_2 \neq 0$, and therefore the set $\{\d_1,\d_2\}$ is linearly dependent in $\U(\R^2,\mu)$.
\end{proof}

We now prove the rigidity theorem for derivations.

\begin{proof}[Proof of Theorem \ref{thm_rigiditylowdim}]
We argue by contradiction.  For $k=2$, let $\mu$ be a Radon measure on $\R^2$.  If $\mu_S \neq 0$, then let $A$ be a null set on which $\mu_S$ is concentrated.  For any two derivations $\d_1, \d_2$ in $\U(\R^2,\mu)$, the restrictions $\chi_A\d_1, \chi_A\d_2$ lie in $\U(\R^2,\mu_S)$, by Theorem \ref{thm_locality}, and by Theorem \ref{thm_2dimLI}, there exist functions $\lambda_1, \lambda_2 \in \Li(\R^2,\nu_S)$, not both zero, so that
$$
\lambda_1(\chi_A\d_1) \,+\, \lambda_2(\chi_A\d_2) \;=\; 0.
$$
So from the choice of scalars $\Lambda_i = \chi_A\cdot\lambda_i$, $i=1,2$, we see that $\{\d_1,\d_2\}$ is a linearly dependent set in $\U(\R^2,\mu)$.

A similar argument holds for $k=1$, with Lemma \ref{lemma_dim1} in place of Theorem \ref{thm_2dimLI}.
\end{proof}

In the previous section we studied %derivations with respect to 
Radon measures concentrated on $1$-sets.  %Recall that Lemma \ref{lemma_dim1} gives a characterization of derivations on $\R^1$.  
From Lemma \ref{lemma_dim1} we deduced Theorem \ref{thm_1sets}, which asserts that the rank of such 
modules of derivations is at most one.  As an application of Theorem \ref{thm_2dimLI}, we now deduce the following result about derivations on $2$-sets in $\R^n$.

\begin{prop} \label{prop_2sets}
Let $\mu$ be a Radon measure on $\R^n$.
\begin{enumerate}
\item If $\mu$ is concentrated on a $2$-set $A$, then $\U(\R^n,\mu)$ has rank at most $2$.
\item If $A$ contains a purely $2$-unrectifiable subset of positive $\H^2$-measure, then $\U(\R^n,\mu)$ has rank at most $1$.
\end{enumerate}
\end{prop}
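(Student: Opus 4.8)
The plan is to reduce both parts to the planar theory of the previous section through the rectifiable/unrectifiable splitting, and then to recombine the pieces by a localization principle. First I would apply Lemma \ref{lemma_rectdecomp} to write the $2$-set $A = E \cup F$ as an $\H^2$-measurable union of a $2$-rectifiable set $E$ and a purely $2$-unrectifiable set $F$. The localization principle I have in mind is this: if $\{X_m\}$ is a $\mu$-measurable decomposition and, on each piece, every collection of $r+1$ derivations restricted to $X_m$ is linearly dependent, then, after normalizing the coefficient tuple of each per-piece relation to have $L^\infty$-norm at most one, the assembled scalars $\lambda_i := \sum_m \chi_{X_m}\,\alpha^m_i$ lie in $\Li(\R^n,\mu)$ and furnish a single global relation $\sum_i \lambda_i\d_i = 0$ whose coefficients are not all zero (some piece of positive measure carries a nontrivial relation). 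This reduces everything to bounding the rank on $E$ and on $F$ separately.

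I will prove Part (2) first and use it for the unrectifiable contributions to Part (1). For Part (2) I would mirror the proof of Lemma \ref{lemma_pure1unrect}, with $2$-planes replacing lines and Theorem \ref{thm_2dimLI} replacing Lemma \ref{lemma_dim1}. By the Besicovitch--Federer theorem (Theorem \ref{thm_projections}), for a.e.\ $V \in \G(n;2)$ the image $\proj_V(F)$ is $\H^2$-null, so $(\proj_V)_\#\mu$ is singular to Lebesgue measure on $V \cong \R^2$; by Theorem \ref{thm_2dimLI} the module $\U(V, (\proj_V)_\#\mu)$ then has rank at most one. Hence for any two $\d_1,\d_2 \in \U(\R^n,\mu)$ the pushforwards $(\proj_V)_\#\d_1$ and $(\proj_V)_\#\d_2$ are linearly dependent on every such $V$. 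After a generic rotation of coordinates I may assume all $\binom{n}{2}$ coordinate $2$-planes are of this good type (each fixed coordinate plane is good for a.e.\ frame, and there are finitely many pairs). Feeding the coordinate functions into formula \eqref{eq_pushfwd}, together with the universal relation of Lemma \ref{lemma_linrel} on each good plane, should constrain the gradient pairs $(\d x_i,\d x_j)$ simultaneously and thereby collapse the two gradient vectors $(\d_1 x_1,\dots,\d_1 x_n)$ and $(\d_2 x_1,\dots,\d_2 x_n)$ onto a common measurable line; the Chain Rule (Lemma \ref{lemma_chainrule}) and Corollary \ref{cor_zerocoord} then convert this into the desired dependence of $\d_1$ and $\d_2$.

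For Part (1) I expect no essential difficulty once Part (2) is in hand. Writing $E = N \cup \bigcup_i f_i(A_i)$ with $\H^2(N)=0$ and each $f_i : A_i \to \R^n$ a bi-Lipschitz embedding of a compact $A_i \subset \R^2$, the remainder $N$ is purely $2$-unrectifiable and so has rank at most one by Part (2). On each chart $E_i := f_i(A_i)$, the bi-Lipschitz corollary to Lemma \ref{lemma_pushfwd} gives an $\Li$-module isomorphism $\U(\R^n, \mu\lfloor E_i) \cong \U(\R^2, \nu_i)$, where $\nu_i := (f_i^{-1})_\#(\mu\lfloor E_i)$. The Chain Rule embeds the latter into the free module $\Li(\R^2,\nu_i)^{\oplus 2}$ via $\d \mapsto (\d x_1,\d x_2)$, and this embedding preserves linear independence (by Corollary \ref{cor_zerocoord}); since any three $\R^2$-valued essentially bounded functions are $\Li$-linearly dependent by a pointwise minors/measurable-selection argument, the rank on each $E_i$ is at most two. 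Localizing over the decomposition $\{E_i\}\cup\{N\}\cup\{F\}$ then gives rank at most two on all of $A$, which is Part (1).

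The main obstacle is the transfer step in Part (2). The pushforward $(\proj_V)_\#\d$ records only the $\proj_V$-fiberwise averages of the components of $\d$, so the per-plane rank-one relations constrain, a priori, the conditional expectations of the gradient along the $(n-2)$-dimensional fibers rather than its pointwise values; upgrading these to a genuine pointwise linear dependence of $\d_1$ and $\d_2$ on $\R^n$ is the crux. I expect this to require the finer covering structure of null sets from Theorem \ref{thm_covering} and Lemma \ref{lemma_linrel}, exactly as in the compact case of Lemma \ref{lemma_linrel}, where the universal scalars $g_1,g_2$ are built from an explicit stripe decomposition rather than from black-box projections, so that the line field containing all gradients is genuinely measurable and independent of the chosen derivation.
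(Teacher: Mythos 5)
Your proposal is correct and follows essentially the same route as the paper's own (sketched) proof: the rectifiable/unrectifiable splitting of Lemma \ref{lemma_rectdecomp}, reduction of the rectifiable part to $\R^2$ via bi-Lipschitz pushforwards, and for the purely $2$-unrectifiable part the Besicovitch--Federer projection theorem combined with Theorem \ref{thm_2dimLI} and the universal relations of Lemma \ref{lemma_linrel}, pulled back to $\R^n$ as in Lemma \ref{lemma_pure1unrect}. You have also correctly isolated the one genuinely delicate point --- that pushforwards only see fiberwise averages, so pointwise dependence must come from the derivation-independence of the scalars in Lemma \ref{lemma_linrel} together with localization by characteristic functions --- which is precisely what the paper's phrase ``pulled back to $\R^n$'' glosses over.
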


\begin{proof}[Sketch of Proof]
By Lemma \ref{lemma_rectdecomp}, we have $A = E \cup F$, where $E$ is $2$-rectifiable and $F$ is purely $2$-unrectifiable.  It is easy to see that derivations restricted to $E$ are pushforwards of derivations on $\R^2$; by Theorem \ref{thm_rigiditylowdim}, the rank of $\U(E,\mu)$ is therefore at most $2$.

For the purely $2$-unrectifiable part, %one uses orthogonal projections onto $2$-dimensional subspaces.  B
by Theorem \ref{thm_projections} the image of $F$ under a generic projection is a null set in $\R^2$.  This produces linear dependence relations betwen derivations as in Lemma \ref{lemma_linrel}.  Arguing similarly as in the proof of Lemma \ref{lemma_pure1unrect}, these linear relations can be ``pulled back'' to $\R^n$.  By choosing scalars $\lambda_i \in L^\infty(\R^n,\mu)$ similarly to those in the proof of Theorem \ref{thm_2dimLI}, we conclude that $\U(F,\mu)$ must have rank at most $1$.
\end{proof}

%\vfill
%\pagebreak
%=========================================================================
\section{Derivations on Spaces Supporting a Poincar\'e Inequality} \label{PI-spaces}

We now turn to the class of metric measure spaces which admit a Poincar\'e inequality in a suitably weak sense.
These were first considered in the work of Heinonen and Koskela in their study of quasiconformal mappings on metric spaces \cite{HeinonenKoskela}, and it is known that such spaces possess good geometric properties, such as quasi-convexity \cite{DS}.  As stated before in the Introduction, Cheeger has also proven an analogue of the Rademacher theorem on such spaces \cite{Cheeger}.  

In what follows, we discuss facts about Sobolev spaces on metric measure spaces that support a $p$-Poincar\'e inequality and then construct derivations on such spaces with respect to the underlying measure.  As an application, we also prove the $2$-dimensional case of Cheeger's conjecture about the structure of such measures.

\subsection{Calculus on Metric Spaces}

As before, $(X,\rho,\mu)$ denotes a metric measure space.  Here and in the remainder of the section we assume that $\mu$ is \emph{doubling}, as defined in Equation \eqref{eq_doubling}.

\begin{rmk} \label{rmk_doubling-separable}
If $X$ admits a doubling measure then the metric on $X$ is also \emph{doubling}, that is: there exists $N \in \N$ so that every ball $B$ in $X$ can be covered by $N$ balls of half the radius of $B$.

By iterating the doubling property above, we see that every ball $B$ in $X$ is a separable metric space.  It follows from Part (2) of Lemma \ref{lemma_ctyderiv} that a linear operator $\d : \Lip_b(B) \to \Li(B,\mu)$ is weak-$*$ continuous on bounded sets if and only if it is sequentially weak-$*$ continuous.
\end{rmk}

%Towards a definition of $p$-PI spaces, 
Following \cite{HeinonenKoskela}, we now introduce the notion of an upper gradient.

\begin{defn} \label{defn_uppergrad}
Let $u : X \to \R$ be Borel.  A Borel function $g : X \to [0,\infty]$ is an \emph{upper gradient} for $u$ if the inequality
\begin{equation} \label{eq_upgradineq}
|u(y) - u(x)| \;\leq\; \int_a^b g(\gamma(t)) \,dt
\end{equation}
holds for all rectifiable curves $\gamma : [a,b] \to X$ which are parametrized by arc-length and which satisfy $x = \gamma(a)$ and $y = \gamma(b)$.
\end{defn}

\begin{defn}
We say that $(X,\rho,\mu)$ supports a \emph{$p$-Poincar\'e inequality} if there exist $\Lambda \geq 1$ and $C > 0$ so that for all balls $B$ in $X$ and all $u \in L^1_{loc}(X,\mu)$, we have
\begin{equation}
\dashint_B |u - u_B| \,d\mu \;\leq\; 
C \cdot \diam(B) \cdot \Big( \dashint_{\Lambda B} g^p \,d\mu \Big)^{1/p}.
\end{equation}
whenever $g$ is an upper gradient of $u$.  As a shorthand, we call $(X,\rho,\mu)$ a \emph{$p$-PI space} if $\mu$ is doubling and if $(X,\rho,\mu)$ admits a $p$-Poincar\'e inequality.
\end{defn}

Following \cite[Sect 2]{Cheeger}, for $u \in L^p(X,\mu)$ we now define
\begin{equation} \label{eq_sobolevnorm}
\|u\|_{1,p} \;:=\; \|u\|_{\mu,p} + \inf_{\{g_i\}} \liminf_{i \to \infty} \|g_i\|_{\mu,p},
\end{equation}
where the infimum is taken over all sequences $\{u_i\}_{i=1}^\infty$ in $L^p(X,\mu)$ so that $u_i \to u$ in $L^p$-norm and so that $g_i$ is a upper gradient for $u_i$, for each $i \in \N$.  

\begin{defn}
The Sobolev space $H^{1,p}(X,\mu)$ is the subspace of functions $u \in L^p(X,\mu)$ for which $\|u\|_{1,p} < \infty$.
\end{defn}

Indeed, $\| \cdot \|_{1,p}$ is a norm on $H^{1,p}(X,\mu)$, but more is true; the next theorem summarizes \cite[Thms 2.7, 2.10, 2.18, 4.48]{Cheeger}.%, \cite[Thms 2.7 \& 4.48]{Cheeger}.

\begin{thm}[Cheeger, 1999] \label{thm_sobolevreflexivity}
$(H^{1,p}(X,\mu), \|\cdot\|_{1,p})$ is a Banach space.  Moreover,
\begin{enumerate}
\item 
if $p > 1$ and if $(X,\rho,\mu)$ is a $p$-PI space, then $H^{1,p}(X,\mu)$ is reflexive;
\item
for each $f \in H^{1,p}(X,\mu)$, there exists $g_f \in L^p(X,\mu)$ so that 
$$
\|f\|_{1,p} \;=\; \|f\|_{\mu,p} + \|g_f\|_{\mu,p}.
$$
If $g$ %\in L^pX,\mu)$ 
is an upper gradient of $f$, then $g_f \leq g$ holds $\mu$-a.e.\ on $X$.
\end{enumerate}
\end{thm}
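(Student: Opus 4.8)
The statement collects several results of Cheeger \cite{Cheeger}, so the plan is to reconstruct their proofs in the order: (i) $\|\cdot\|_{1,p}$ is a norm and $H^{1,p}(X,\mu)$ is complete; (ii) the existence and minimality of $g_f$ in Part (2); (iii) the reflexivity in Part (1). The single technical engine underlying (i) and (ii) is the lower semicontinuity of the generalized gradient functional $u \mapsto \inf_{\{g_i\}} \liminf_i \|g_i\|_{\mu,p}$ with respect to $L^p$ convergence, which I would establish first by a diagonal argument.

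For the norm and completeness, definiteness is immediate from $\|u\|_{1,p} \geq \|u\|_{\mu,p}$; subadditivity of the gradient term follows because $g+h$ is an upper gradient of $u+v$ whenever $g,h$ are upper gradients of $u,v$, and homogeneity is clear. For completeness I would take a Cauchy sequence $\{f_n\}$ in $H^{1,p}$, note it is Cauchy in $L^p$ and hence converges in $L^p$ to some $f$, pass to a subsequence with $\|f_{n+1}-f_n\|_{1,p} \leq 2^{-n}$, and choose near-optimal upper gradients for the differences. Lower semicontinuity of the gradient functional then shows both that $f \in H^{1,p}(X,\mu)$ and that $f_n \to f$ in the $\|\cdot\|_{1,p}$-norm.

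For Part (2), fix $f$ and take a minimizing pair $\{u_i, g_i\}$ with $u_i \to f$ in $L^p$ and $\|g_i\|_{\mu,p} \to G := \inf_{\{g_i\}} \liminf_i \|g_i\|_{\mu,p}$. For $p > 1$ the bounded sequence $\{g_i\}$ has a subsequence with $g_i \wkto g_f$ in $L^p$; Mazur's lemma produces convex combinations $\tilde g_k \to g_f$ strongly, and since a convex combination of upper gradients is an upper gradient of the corresponding convex combination $\tilde u_k$ of the $u_i$ (which still converges to $f$ in $L^p$), the pair $(\tilde u_k, \tilde g_k)$ is admissible. Weak lower semicontinuity of the norm gives $\|g_f\|_{\mu,p} \leq G$, while admissibility gives $\|g_f\|_{\mu,p} \geq G$, so the infimum is attained. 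The borderline case $p=1$ needs a separate compactness argument, also found in \cite{Cheeger}. Pointwise minimality $g_f \leq g$ $\mu$-a.e.\ for an arbitrary upper gradient $g$ then follows from the lattice property of (generalized) upper gradients: $\min(g_f,g)$ is again admissible for $f$, so minimality of $\|g_f\|_{\mu,p}$ forces $g_f = \min(g_f,g)$ a.e.

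For Part (1), reflexivity, the plan is to invoke Cheeger's measurable differentiable structure (Theorem \ref{thm_cheegerrademacher}), whose existence is guaranteed precisely by the $p$-PI hypothesis. On each coordinate chart $X_\alpha$ with chart map $\mathbf{x}^\alpha : X \to \R^{N(\alpha)}$, every $f \in H^{1,p}(X,\mu)$ has a measurable differential $d_\alpha f$, and the Poincar\'e inequality yields a two-sided comparison $C^{-1} |d_\alpha f| \leq g_f \leq C |d_\alpha f|$ $\mu$-a.e.\ on $X_\alpha$, with $C$ depending only on the PI data. Consequently $f \mapsto (f, (d_\alpha f)_\alpha)$ embeds $H^{1,p}(X,\mu)$, with an equivalent norm, into the product $L^p(X,\mu) \times \bigoplus_\alpha L^p(X_\alpha; \R^{N(\alpha)})$. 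This product is reflexive because $p > 1$; by the completeness from step (i) the image is a closed subspace, and a closed subspace of a reflexive space is reflexive (reflexivity being stable under equivalent renorming), which gives the claim. The main obstacle is exactly this last part: it rests on the full strength of Cheeger's differentiation theorem and on the lower bound $C^{-1}|d_\alpha f| \leq g_f$ controlling the minimal gradient from below by the differential, which is where both the Poincar\'e inequality and the restriction $p>1$ are genuinely used and which cannot be recovered by soft functional analysis alone.
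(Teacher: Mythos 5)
You should first be aware that the paper does not prove this statement at all: it is imported as a black box, with the sentence ``the next theorem summarizes [Thms 2.7, 2.10, 2.18, 4.48] of \cite{Cheeger}'' immediately preceding it. So there is no in-paper proof to compare against; the only meaningful comparison is with Cheeger's original arguments. Measured against those, your reconstruction is faithful in all three steps: completeness via lower semicontinuity of the relaxed gradient functional, existence of $g_f$ via weak compactness plus Mazur's lemma for $p>1$, and reflexivity via the embedding $f \mapsto (f, df)$ into a product of $L^p$ spaces built from the measurable differentiable structure. This is essentially the route Cheeger takes (he phrases the last step as producing an equivalent uniformly convex norm, but the closed-subspace-of-a-reflexive-space formulation is the same mechanism).

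Two points in your sketch are glossed and deserve care. First, the ``lattice property'' you invoke for the pointwise minimality $g_f \leq g$ is \emph{false} for genuine upper gradients: if $g_1, g_2$ are upper gradients of $u$, the function $\min(g_1,g_2)$ need not satisfy the upper gradient inequality, since $\int_\gamma \min(g_1,g_2)$ can be strictly smaller than both curve integrals. The lattice property holds only for weak/generalized upper gradients, and proving it (via a gluing argument along curves, or Fuglede-type lemmas) is a genuine piece of work in Cheeger's Section 2, not a soft fact. Second, the two-sided comparison $C^{-1}|d^n f| \leq g_f \leq C|d^n f|$ does not hold with a constant ``depending only on the PI data'': the lower bound constant $K(n)$ in inequality \eqref{eq_Liplowerbd} is chart-dependent, so your product embedding is not globally bi-Lipschitz with Euclidean fiber norms. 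The standard repair, which is exactly what Cheeger does, is to equip each fiber $\R^{k(n)}$ over $X^n$ with the adapted (measurable Finsler) norm $|v|_x := \Lip[v \bullet \xi^n](x)$, making the embedding an isometry, and then to note that each summand is isomorphic to a Euclidean $L^p$ space and that countable $\ell^p$-sums of reflexive spaces remain reflexive for $1<p<\infty$. Neither issue is fatal, but both are precisely where the content of Cheeger's theorems lives.
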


We call $g_f$ the \emph{minimal (generalized) upper gradient of $f$}.

\begin{rmk}
Shanmugalingam has defined \emph{Newtonian-Sobolev spaces} $N^{1,p}(X,\mu)$ that are isometrically equivalent to the spaces $H^{1,p}(X,\mu)$, for $p \in (1,\infty)$ \cite[Thm 4.10]{Sh}.  Her approach uses the notion of \emph{weak upper gradients}, and the spaces $N^{1,p}(X,\mu)$ are norm completions of functions in $L^p(X,\mu)$ which admit weak upper gradients in $L^p(X,\mu)$.  Moreover, for $p \in (1,\infty)$ we have the equivalence
$$
W^{1,p}(\R^n) \;\cong\;
H^{1,p}(\R^n,m_n) \;\cong\;
N^{1,p}(\R^n,m_n)
$$
For further details, see \cite{Sh}, \cite[Chap 5-6]{HeinonenBook}, and \cite{HeinonenNC}.
\end{rmk}

For $f \in \Lip(X)$, the constant $L(f)$ is always an upper gradient for $f$ but rarely the minimal generalized upper gradient.  We instead consider the upper and lower pointwise Lipschitz constants of $f$, defined as
\begin{equation} \label{eq_biglip} 
\left.\begin{split}
\Lip[f](x) &\;:=\;
\limsup_{y \to x} \frac{|f(x) - f(y)|}{\rho(x,y)}, \\
\hspace{.75in}
\lip[f](x) &\;:=\;
\liminf_{r \to 0} \sup_{\rho(x,y) \leq r} \frac{|f(x) - f(y)|}{r},
\hspace{.75in}
\end{split}\right\}
\end{equation}
respectively.
It is clear from formula \eqref{eq_biglip} %and \eqref{eq_littlelip}
that, for all $x \in X$,
\begin{equation} \label{eq_lipconsts}
\lip[f](x) \;\leq\;
\Lip[f](x) \;\leq\; L(f).
\end{equation}
Semmes has shown that $\Lip[f]$ and $\lip[f]$ are upper gradients of $f$ \cite[Lem 1.20]{Semmes}.  Moreover, for $p$-PI spaces $X$, we have the $\mu$-a.e.\ identities \cite[Thm 6.1]{Cheeger}
\begin{equation} \label{eq_lipminupgrad}
g_f(x) \;=\; \Lip[f](x) \;=\; \lip[f](x).
\end{equation}

\subsection{Derivations from Differentiability}

We now state the Cheeger-Rademacher theorem for $p$-PI spaces.  To fix notation, for $f : X \to \R^k$ and $a \in \R^k$, we write $ a \bullet f := \sum_i a_i f_i$ for their (pointwise) inner product.

\begin{thm}[Cheeger, 1999] \label{thm_cheegerrademacher}
Let $(X,\rho,\mu)$ be a $p$-PI space.  There exists $N \in \N$ and a $\mu$-measurable decomposition $\{X^n\}_{n=1}^\infty$ with the following properties: for each $n \in \N$, there exist $k = k(n) \in \N$, $1 \leq k \leq N$ and $\xi^n \in \Lip(X;\R^k)$ so that
\begin{enumerate}
\item 
There exists $K = K(n) > 0$ so that for all $x \in X^n$,
\begin{equation} \label{eq_Liplowerbd}
K \;\leq\;
\inf\left\{ \Lip[ a \bullet \xi^n ](x) \,:\, a \in \R^k, |a| = 1 \right\}.
\end{equation}

\item
For each $f \in \Lip(B)$, there is a unique map $d^nf : X \to \R^k$, with components in $\Li(X^n,\mu)$, so that for $\mu$-a.e.\ $x \in X^n$,
\begin{equation} \label{eq_diffcond}
\limsup_{y \to x} \left|\frac{f(y) -f(x) - d^nf(x) \bullet \big(\xi^n(y)- \xi^n(x)\big)}{\rho(x,y)}\right| \;=\; 0.
\end{equation}
\end{enumerate}
\end{thm}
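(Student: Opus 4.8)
The plan is to build the charts $\xi^n$ by a maximality/exhaustion procedure and to read off the differentiability \eqref{eq_diffcond} directly from maximality, using the identity \eqref{eq_lipminupgrad} and the reflexivity of Theorem \ref{thm_sobolevreflexivity} as the analytic engine. The organizing notion is that of a \emph{chart}: given a $\mu$-measurable $U$ with $\mu(U)>0$, an integer $k$, a constant $K>0$, and an $L$-Lipschitz map $\xi=(\xi_1,\dots,\xi_k):X\to\R^k$, call $(\xi,U)$ a $(k,K)$-chart if $\inf_{|a|=1}\Lip[a\bullet\xi](x)\ge K$ for $\mu$-a.e.\ $x\in U$. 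Since $\Lip[\,\cdot\,](x)$ is subadditive and positively homogeneous (a seminorm on $\Lip(X)$ for each fixed $x$), this non-degeneracy says exactly that $a\mapsto\Lip[a\bullet\xi](x)$ is a norm on $\R^k$ comparable to the Euclidean one, with constants $K$ and $L$. Distance functions are $1$-Lipschitz with $\Lip\equiv 1$ on a quasiconvex space, so some $(1,\cdot)$-chart exists on every positive-measure set; this will ensure $k(n)\ge1$.

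First I would establish the \emph{dimension bound}: there is $N=N(\kappa,C,\Lambda,p)$ so that no $(k,K)$-chart with $k>N$ is supported on a set of positive measure, for any $K>0$. This is where doubling and the Poincar\'e inequality genuinely enter, and I expect it to be the main obstacle. One route is a blow-up: at a $\mu$-density point $x_0\in U$ one rescales the metric and measure, and by the persistence of doubling and of the $p$-Poincar\'e inequality under pointed measured limits, every tangent cone is again a PI space on which the $\xi_i$ converge to \emph{generalized linear} functions. The identity \eqref{eq_lipminupgrad}, $g_f=\Lip[f]=\lip[f]$, forces well-defined comparable upper and lower infinitesimal slopes, so that $\xi$ is infinitesimally bi-Lipschitz in its $k$ independent directions; a packing estimate on the tangent cone, controlled by $\kappa$, then caps $k$. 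Extracting an honest bound on $N$ is the delicate part.

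Granting $N$, I would exhaust $X$ by maximal charts. For each $k\le N$ let $s_k$ be the supremum of $\mu(U)$ over all $(k,K)$-charts with $K>0$ arbitrary; starting from the largest $k$ with $s_k>0$, select an almost-maximal chart, remove its domain, and iterate on the remainder. Because each step removes positive measure and $\mu$ is $\sigma$-finite, this yields a countable $\mu$-measurable decomposition $\{X^n\}$ with $(k(n),K(n))$-charts $\xi^n$ enjoying the crucial \emph{maximality} property: no $(k(n)+1,\,\cdot\,)$-chart is supported on any positive-measure subset of $X^n$. The lower bound \eqref{eq_Liplowerbd} is then precisely the defining non-degeneracy of these charts.

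Finally I would deduce differentiability. Fix $n$, write $\xi=\xi^n$, $k=k(n)$, and let $f\in\Lip(X)$. For $\mu$-a.e.\ $x$ set $\Phi(x):=\inf_{a\in\R^k}\Lip[f-a\bullet\xi](x)$; as $a\mapsto\Lip[f-a\bullet\xi](x)$ is convex and, by non-degeneracy, coercive, the infimum is attained at a unique point, and a standard measurable-selection argument produces a measurable minimizer $x\mapsto d^nf(x)$. The key claim is $\Phi=0$ $\mu$-a.e.\ on $X^n$. If instead $\Phi\ge\e$ on a positive-measure set $V$, a short compactness computation shows $(\xi_1,\dots,\xi_k,f)$ is a $(k+1,K')$-chart on $V$ for some $K'>0$: splitting a unit vector $(b,c)\in\R^{k+1}$ into the regimes $|c|\ge\delta$ and $|c|<\delta$, one bounds $\Lip[b\bullet\xi+cf](x)$ below using $\Phi\ge\e$ in the first regime and the triangle inequality for $\Lip$ together with \eqref{eq_Liplowerbd} in the second, choosing $\delta$ from $K$ and $L(f)$. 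This contradicts maximality, so $\Phi=0$ a.e.; since the minimizer attains the infimum, $\Lip[f-d^nf(x)\bullet\xi](x)=0$ a.e., which is exactly \eqref{eq_diffcond}. Uniqueness of $d^nf(x)$ follows from \eqref{eq_Liplowerbd}: if $a,a'$ both annihilate the pointwise slope, then $\Lip[(a-a')\bullet\xi](x)\le\Lip[f-a\bullet\xi](x)+\Lip[f-a'\bullet\xi](x)=0$, forcing $|a-a'|K\le0$ and hence $a=a'$.
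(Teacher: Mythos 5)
You should first note a point of comparison: the paper does not prove Theorem \ref{thm_cheegerrademacher} at all --- it is imported from \cite[Thm 4.38]{Cheeger}, as the remark following the statement makes explicit --- so your proposal has to be measured against Cheeger's own argument and Keith's later streamlining \cite{Keith}. Your architecture is in fact the same as theirs: non-degenerate charts, a dimension bound $N$, exhaustion by (almost) maximal charts, and differentiability extracted from maximality. Your final two steps are essentially complete and correct: the two-regime estimate (splitting $|c|\geq\delta$ from $|c|<\delta$) showing that a function $f$ with $\Phi\geq\epsilon$ on a set of positive measure would enlarge a maximal chart is the standard argument; the identity $\Lip[f-d^nf(x)\bullet\xi](x)=0$ is literally \eqref{eq_diffcond}; uniqueness via $K|a-a'|\leq\Lip[(a-a')\bullet\xi](x)=0$ is right, and the same computation yields $|d^nf(x)|\leq L(f)/K$, hence the required $\Li$ bound on the components.

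The genuine gap is the dimension bound, which you correctly flag as the main obstacle and then only gesture at. That bound is not a preliminary: it is the analytic heart of the theorem, the one place where doubling and the Poincar\'e inequality (through \eqref{eq_lipminupgrad}) actually act, and the blow-up sketch does not supply it. Concretely: (i) your charts are defined by lower bounds on $\Lip[a\bullet\xi]$, a $\limsup$ quantity as in \eqref{eq_biglip}, and such lower bounds do \emph{not} persist under blow-up --- the witnessing points exist at some scales, not all scales, so along a subsequence of scales the limit function can be constant in the direction $a$; to get persistence one must first convert to $\lip$ lower bounds via \eqref{eq_lipminupgrad} and then run Egorov-type uniformization (in the point, in the scale, and in $a$ over the unit sphere) together with the measured Gromov--Hausdorff compactness theory, which is a substantial body of work in \cite{Cheeger}. (ii) The claim that ``a packing estimate on the tangent cone, controlled by $\kappa$, then caps $k$'' is circular: that $k$ infinitesimally independent Lipschitz directions are incompatible with doubling for large $k$ is precisely the quantitative statement to be proved, and doubling alone does not imply any such bound (snowflaked doubling spaces admit no finite-dimensional structure); the known proofs --- Cheeger's, and Keith's direct argument from doubling plus the Lip-lip identity --- each take several pages of genuinely quantitative work. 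Without a finite $N$, your exhaustion need not terminate and neither conclusion of the theorem is established. Two smaller, repairable slips: uniqueness of the minimizer of the convex, coercive function $a\mapsto\Lip[f-a\bullet\xi](x)$ does not follow from convexity and coercivity (such functions can have flat minima); it holds only a posteriori on the set where $\Phi=0$, by your own non-degeneracy computation, so the measurable selection should be phrased there. And quasiconvexity alone does not give $\Lip[d(\cdot,p)]\equiv 1$, nor even a positive lower bound (comb-like quasiconvex sets defeat it); one should pass to the bi-Lipschitz equivalent length metric and use length-distance functions, for which the bound does hold.
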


Put $\xi^n := (\xi^n_1,\ldots,\xi^n_k)$.  To mimic the terminology of 
manifolds, we refer to $\xi^n_i$ as \emph{(Cheeger) coordinates} on 
$X^n$, to $(\xi^n,X^n)$ as \emph{(Cheeger) coordinate charts} 
on $X$, and to $d^nf$ as the \emph{(Cheeger) differential} of $f$ on 
$X^n$.

\begin{rmk}
Inequality \eqref{eq_Liplowerbd} is a tacit consequence of the proof of \cite[Thm 4.38]{Cheeger} and is used to show $\xi^n_i \in \Li(X^n,\mu)$. In fact, the measurable decomposition is chosen so that it is valid on each $X^n$.

Equation \eqref{eq_diffcond} is a reformulation of Part (iii) of \cite[Thm 4.38]{Cheeger}.  In the notation of \cite{Cheeger},
$$
d^\a f(z) \;=\; \big(b^\a_1(z;f), \cdots, b^\a_k(z;f)\big).
$$
\end{rmk}

On $\R^n$, the coordinate $x_i$ is precisely the Lipschitz function whose gradient is the vector $e_i$.  The next corollary is an analogue of this fact for $p$-PI spaces, and it follows directly from the uniqueness of Cheeger differentials.

\begin{cor} \label{cor_coordvector}
Assuming the hypotheses of Theorem \ref{thm_cheegerrademacher}, let $n \in \N$ and let $1 \leq i \leq k(n)$.  Then $d^n\xi_i^n(x) = e_i$ holds for $\mu$-a.e.\ $x \in X^n$. 
\end{cor}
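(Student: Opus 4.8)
The plan is to apply the defining differentiability relation \eqref{eq_diffcond} to the coordinate function $f = \xi_i^n$ itself and to exhibit the constant vector field $x \mapsto e_i$ as a competitor satisfying that relation; uniqueness of the Cheeger differential will then force $d^n\xi_i^n = e_i$ almost everywhere. First I would recall that each $\xi_i^n$ is a Lipschitz function on $X$, hence lies in $\Lip(B)$ for every ball $B$, so Part (2) of Theorem \ref{thm_cheegerrademacher} applies and produces a unique map $d^n\xi_i^n : X \to \R^k$, with components in $\Li(X^n,\mu)$, satisfying \eqref{eq_diffcond}.

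The key observation is that the constant map $c(x) := e_i$ --- whose components are constants, hence trivially lie in $\Li(X^n,\mu)$ --- already fulfills the differentiability condition for $f = \xi_i^n$. Indeed, using the inner-product notation $a \bullet f = \sum_j a_j f_j$, we have
$$
e_i \bullet \big( \xi^n(y) - \xi^n(x) \big) \;=\; \xi_i^n(y) - \xi_i^n(x),
$$
so the numerator in \eqref{eq_diffcond}, with $d^n\xi_i^n(x)$ replaced by $e_i$, is
$$
\xi_i^n(y) - \xi_i^n(x) - \big( \xi_i^n(y) - \xi_i^n(x) \big) \;=\; 0
$$
for every $y$. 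The limsup in \eqref{eq_diffcond} therefore vanishes identically (and not merely $\mu$-a.e.), so the candidate $c(x) = e_i$ satisfies the differentiability relation for $\xi_i^n$.

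Finally, I would invoke the uniqueness clause in Part (2) of Theorem \ref{thm_cheegerrademacher}: the map satisfying \eqref{eq_diffcond} for $\xi_i^n$ is determined up to a $\mu$-null set, whence $d^n\xi_i^n(x) = e_i$ for $\mu$-a.e.\ $x \in X^n$. There is essentially no obstacle here. The only points to verify are that the competitor $e_i$ is admissible --- immediate, since its components are constants and thus lie in $\Li(X^n,\mu)$ --- and that it annihilates the difference quotient, which is the elementary inner-product identity above. The corollary is thus a direct consequence of the uniqueness of Cheeger differentials.
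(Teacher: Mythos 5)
Your proof is correct and is precisely the argument the paper has in mind: the paper states that the corollary ``follows directly from the uniqueness of Cheeger differentials,'' and your write-up simply fills in the (easy) verification that the constant competitor $e_i$ satisfies \eqref{eq_diffcond} for $f = \xi_i^n$ via the identity $e_i \bullet (\xi^n(y) - \xi^n(x)) = \xi_i^n(y) - \xi_i^n(x)$. No gap; this matches the paper's approach exactly.
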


For a $p$-PI space $(X,\rho,\mu)$, Cheeger and Weaver have shown that $\U(X,\mu)$ is nontrivial \cite[Thm 43]{WeaverED}.  However, their argument is non-constructive, so we will prove a quantitative form of their theorem below.

\begin{thm} \label{thm_diffderiv}
Let $(X,\rho,\mu)$ be a $p$-PI space.  For $f \in \Lip(X)$ and $n \in \N$, let $d^nf : X^n \to \R^k$ be as in Theorem \ref{thm_cheegerrademacher}.  For $1 \leq i \leq k$, the linear operator $\d_i^n : \Lip(X^n) \to \Li(X^n,\mu)$ given by
\begin{equation} \label{eq_PIderiv}
\d_i^nf \;:=\; d^nf \bullet e_i
\end{equation}
is a derivation in $\U(X^n,\mu)$.
\end{thm}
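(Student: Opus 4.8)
The plan is to verify, in turn, that $\d_i^n$ takes values in $\Li(X^n,\mu)$ and is bounded, that it is linear, that it satisfies the Leibniz rule, and finally that it is weak-$*$ continuous; these are exactly the requirements of Definition \ref{defn_deriv} for the restriction of $\d_i^n$ to $\Lipb(X^n)$ (the differential $d^nf$ being defined for all $f\in\Lip(X^n)$). Since $\mu$ is doubling, $X^n$ is separable (Remark \ref{rmk_doubling-separable}), so by Part (2) of Lemma \ref{lemma_ctyderiv} it suffices to test continuity along sequences. Throughout I abbreviate $\Delta\xi := \xi^n(y)-\xi^n(x)$ and write $g_f$ for the minimal upper gradient of $f$.

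First I would establish the pointwise bound $|d^nf(x)| \le L(f)/K$ for $\mu$-a.e.\ $x \in X^n$. By \eqref{eq_diffcond}, $f(y)-f(x)$ and $d^nf(x)\bullet\Delta\xi$ differ by $o(\rho(x,y))$, so substituting one for the other leaves the relevant limsup unchanged and yields $\Lip[f](x) = |d^nf(x)|\cdot\Lip[a\bullet\xi^n](x)$, where $a := d^nf(x)/|d^nf(x)|$ whenever $d^nf(x)\neq 0$ (the bound being trivial otherwise). The lower bound \eqref{eq_Liplowerbd} then gives $|d^nf(x)|\cdot K \le \Lip[f](x) \le L(f)$, so $\|\d_i^nf\|_{\infty,\,\mu} \le L(f)/K$ and $\d_i^n$ is bounded. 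Linearity is immediate from the uniqueness clause of Theorem \ref{thm_cheegerrademacher}, since $a\,d^nf + b\,d^ng$ satisfies \eqref{eq_diffcond} for $af+bg$. For the Leibniz rule I would multiply the differentiability relations for $f$ and $g$; as $|\Delta\xi| \le L(\xi^n)\rho(x,y)$, the cross term $(d^nf(x)\bullet\Delta\xi)(d^ng(x)\bullet\Delta\xi)$ is $O(\rho(x,y)^2)=o(\rho(x,y))$, so $f(x)\,d^ng(x)+g(x)\,d^nf(x)$ satisfies \eqref{eq_diffcond} for $fg$, and uniqueness identifies it with $d^n(fg)$, giving $\d_i^n(fg)=f\,\d_i^ng+g\,\d_i^nf$.

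The substantive point is weak-$*$ continuity, which I expect to be the main obstacle: pointwise convergence is a weak hypothesis, whereas the differential is defined by a pointwise limit. Let $f_m \to f$ pointwise with $\sup_m \|f_m\|_{\Lip} \le L$; then $f \in \Lipb(X^n)$ with the same constants, and the estimate above gives the uniform bound $\|\d_i^nf_m\|_{\infty,\,\mu} \le L/K$. Since weak-$*$ continuity is tested against $h \in L^1(X^n,\mu)$, and such $h$ may be taken supported in a ball of finite $\mu$-measure, I would multiply by a fixed Lipschitz cutoff that is locally constant on that ball; this changes neither $f_m$ nor $d^nf_m$ there (the differential of a locally constant function vanishes, by the bound above), so there is no loss in assuming the $f_m$ supported in a set of finite measure. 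Then $\{f_m\}$ is bounded in $H^{1,p}(X^n,\mu)$, with $L$ a common upper gradient.

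To identify the weak-$*$ limit I would pass through the Sobolev theory. The pointwise inequality $|d^nu(x)| \le g_u(x)/K$, which follows from the boundedness estimate together with the identity \eqref{eq_lipminupgrad}, shows that $u \mapsto d^nu$ is bounded from $\Lipb$ into $L^p$ with respect to $\|\cdot\|_{1,p}$, and hence (by density of Lipschitz functions in $H^{1,p}$, from Cheeger's framework) extends to a bounded linear operator $H^{1,p}(X^n,\mu) \to L^p(X^n,\mu;\R^k)$. As $p>1$, reflexivity (Theorem \ref{thm_sobolevreflexivity}) furnishes a subsequence with $f_{m_l} \wkto f$ weakly in $H^{1,p}$, and weak-to-weak continuity of the extended operator gives $d^nf_{m_l} \wkto d^nf$ weakly in $L^p$. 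I would then upgrade this to weak-$*$ convergence in $\Li$: given $h \in L^1$ and $\e>0$, approximate $h$ in $L^1$-norm by some $h'\in L^{p'}$ (with $p'$ the conjugate exponent); the uniform bound $\|\d_i^nf_m\|_{\infty,\,\mu}\le L/K$ controls the substitution error, while weak $L^p$ convergence handles the pairing against $h'$. This yields $\d_i^nf_{m_l} \wsto \d_i^nf$, and since every subsequence admits such a further subsequence with the same limit, the full sequence converges, so $\d_i^n \in \U(X^n,\mu)$.
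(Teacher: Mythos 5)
Your proposal is correct, and it rests on the same two pillars as the paper's proof: the pointwise bound $|d^nf(x)| \leq \Lip[f](x)/K$ extracted from \eqref{eq_Liplowerbd} (this is exactly the paper's Lemma \ref{lemma_derivlipbd}), and the reflexivity of $H^{1,p}$ (Theorem \ref{thm_sobolevreflexivity}) together with the identity \eqref{eq_lipminupgrad} to convert weak-$*$ Lipschitz convergence into information about the differentials. The functional-analytic packaging of the continuity step, however, is genuinely different. The paper never extends the operator $d^n$ beyond Lipschitz functions: for each $\psi \in L^1$ it localizes to a ball $B$, approximates $\psi$ by $\varphi \in L^q(B,\mu)$, shows that the functional $f \mapsto \int_{B \cap X^n} \varphi \cdot \d_i^n f$ is bounded in the $H^{1,p}$-norm on Lipschitz functions, extends that \emph{functional} by Hahn--Banach to an element of $[H^{1,p}(B\cap X^n,\mu)]^*$, and then invokes a dedicated lemma (Lemma \ref{lemma_weaksobolevconv}, proved with Mazur's lemma) asserting that $f_a \wsto 0$ in $\Lip_b(X^n)$ forces $f_a \wkto 0$ in $H^{1,p}(B \cap X^n,\mu)$. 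You instead extend the \emph{operator} $u \mapsto d^n u$ to all of $H^{1,p}$ by density of Lipschitz functions; that density is true for $p$-PI spaces (a theorem of Cheeger and of Shanmugalingam) but is not among the facts quoted in this paper, and the Hahn--Banach route exists precisely to avoid invoking it. You also localize by multiplying the $f_m$ by cutoffs (legitimate, since your pointwise bound yields locality of $d^n$), where the paper localizes the test function and works in the Sobolev space of the ball; and you verify the Leibniz rule directly from \eqref{eq_diffcond}, where the paper simply cites Cheeger's identity. Two small points to tighten: (i) reflexivity furnishes a weakly convergent subsequence, but your assertion that its $H^{1,p}$-weak limit \emph{is} $f$ needs a sentence --- under your cutoff reduction, dominated convergence gives $f_{m_l} \to f$ in $L^p$-norm on the finite-measure support, and the continuous embedding of $H^{1,p}$ into $L^p$ then identifies the weak limit; this identification is precisely the content of the paper's Lemma \ref{lemma_weaksobolevconv}; (ii) the constant $L$ is not an $L^p$-integrable upper gradient when $\mu(X^n) = \infty$; after the cutoff you should instead use $\Lip[\chi f_m]$, which is bounded and supported in a set of finite measure, as the common upper gradient --- this is where the cutoff actually earns its keep.
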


To prove Theorem \ref{thm_diffderiv}, we require two lemmas.  The first is similar to the $\Li$-regularity argument in \cite[p.457]{Cheeger}.

\begin{lemma} \label{lemma_derivlipbd}
Let $(X,\rho,\mu)$ be a $p$-PI space.  For each $n \in \N$, there exists $C = C(n) > 0$ so that for all $f \in \Lip(X)$ and $\mu$-a.e.\ $x \in X^n$, we have
$$
|\d^n_f(x)| \;\leq\; C \cdot \Lip[f](x).
$$
\end{lemma}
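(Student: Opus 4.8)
The plan is to exploit the identity \eqref{eq_lipminupgrad}, which says that on a $p$-PI space the minimal generalized upper gradient $g_f$ coincides $\mu$-a.e.\ with both $\Lip[f]$ and $\lip[f]$. Since by definition \eqref{eq_PIderiv} we have $\d^n_f(x) = d^nf(x) \bullet e_i$, the left-hand side $|\d^n_f(x)|$ is controlled as soon as we control the Euclidean norm $|d^nf(x)|$ of the full Cheeger differential. So the real content of the lemma is a pointwise bound of the form $|d^nf(x)| \leq C(n) \cdot g_f(x)$, valid $\mu$-a.e.\ on $X^n$, after which one invokes \eqref{eq_lipminupgrad} to replace $g_f$ by $\Lip[f]$.

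First I would fix $n$ and a point $x \in X^n$ at which both the differentiability relation \eqref{eq_diffcond} and the identity \eqref{eq_lipminupgrad} hold (a set of full $\mu$-measure). Write $a := d^nf(x) \in \R^k$. The differentiability condition \eqref{eq_diffcond} says that near $x$ the increment $f(y)-f(x)$ is approximated to first order by $a \bullet (\xi^n(y) - \xi^n(x))$. Evaluating the upper pointwise Lipschitz constant $\Lip[\,\cdot\,]$ of both sides, and using the fact that $\Lip$ is subadditive and that the error term in \eqref{eq_diffcond} has zero $\limsup$, one gets the $\mu$-a.e.\ identity
\begin{equation} \label{eq_biglipident}
\Lip[f](x) \;=\; \Lip[\,a \bullet \xi^n\,](x).
\end{equation}
This reduces matters to a \emph{lower} bound for $\Lip[\,a \bullet \xi^n\,](x)$ in terms of $|a|$, and precisely such a bound is supplied by the non-degeneracy inequality \eqref{eq_Liplowerbd}: applying it to the unit vector $a/|a|$ gives $K \leq \Lip[\,(a/|a|) \bullet \xi^n\,](x)$, and by homogeneity of $\Lip$ this yields $K \cdot |a| \leq \Lip[\,a \bullet \xi^n\,](x)$.

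Combining the last inequality with \eqref{eq_biglipident} gives $|d^nf(x)| = |a| \leq K^{-1} \Lip[f](x)$ for $\mu$-a.e.\ $x \in X^n$, and since $|\d^n_f(x)| = |d^nf(x) \bullet e_i| \leq |d^nf(x)|$, the lemma follows with $C(n) := K(n)^{-1}$.

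The step I expect to be the main obstacle is establishing \eqref{eq_biglipident} rigorously. Passing the operator $\Lip[\,\cdot\,]$ through the first-order approximation in \eqref{eq_diffcond} requires care, because $\Lip$ is only a $\limsup$ of difference quotients, not a genuine norm: one must verify that the error term $o(\rho(x,y))$ contributes nothing to the $\limsup$ and that the triangle-type inequality $|\Lip[f](x) - \Lip[\,a\bullet\xi^n\,](x)| \leq \Lip[f - a\bullet\xi^n](x) = 0$ is legitimate at the fixed base point $x$. I would handle this by writing $f = (a \bullet \xi^n) + r$ where $r(y) := f(y) - f(x) - a\bullet(\xi^n(y)-\xi^n(x))$ satisfies $\Lip[r](x) = 0$ by \eqref{eq_diffcond}, and then using subadditivity of $\Lip[\,\cdot\,](x)$ in both directions to pin down \eqref{eq_biglipident}. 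Everything else — homogeneity of $\Lip$, the trivial bound $|v \bullet e_i| \leq |v|$, and the a.e.\ identification of $\Lip[f]$ with $g_f$ — is routine given the results already quoted.
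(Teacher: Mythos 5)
Your proof is correct and takes essentially the same route as the paper: the paper likewise fixes a point of differentiability, uses \eqref{eq_diffcond} together with homogeneity of $\Lip[\,\cdot\,]$ to identify $\Lip\big[(d^nf(x)/|d^nf(x)|)\bullet\xi^n\big](x)$ with $\Lip[f](x)/|d^nf(x)|$, and then invokes the non-degeneracy bound \eqref{eq_Liplowerbd} to conclude with $C = 1/K(n)$. Your preliminary appeal to \eqref{eq_lipminupgrad} is unnecessary, since your argument (like the paper's) bounds $|d^nf(x)|$ directly in terms of $\Lip[f](x)$, but this does not affect correctness.
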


\begin{proof}
Fix $x \in X^n$ and put $a_0 = d^nf(x)/|d^nf(x)|$.  By Part (2) of Theorem \ref{thm_cheegerrademacher},
\begin{eqnarray*}
\Lip[a_0 \bullet \xi^n](x) &=&
\frac{1}{|d^nf(x)|} \cdot 
\limsup_{y \to x} \frac{| d^nf(x) \bullet \big( \xi^n(y) - \xi^n(x)\big) |}{\rho(x,y)} \\ &=&
\frac{1}{|d^nf(x)|} \cdot 
\limsup_{y \to x} \frac{|f(y) - f(x)| }{\rho(x,y)} \;=\;
\frac{\Lip[f](x)}{|d^nf(x)|}.
\end{eqnarray*}
By Theorem \ref{thm_cheegerrademacher}, there exists $K = K(n) > 0$ so that for $\mu$-a.e.\ $x \in X^n$, inequality \eqref{eq_Liplowerbd} holds for all $|a| = 1$.  In particular, the vector $a_0$ has norm $1$, so from the above identity, we obtain the lemma with $C = 1/K$.
\end{proof}

\begin{lemma} \label{lemma_weaksobolevconv}
Let $p > 1$, let $n \in \N$, and let $\{f_a\}_{a=1}^\infty$ be a sequence in $\Lip_b(X^n)$ so that $f_a \wsto 0$.  If $B$ is a ball in $X$ so that $\mu(B \cap X^n) > 0$, then $f_a|B \wkto 0$ in $H^{1,p}(B \cap X^n,\mu)$.
\end{lemma}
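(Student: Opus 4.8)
The plan is to realize $\{f_a|B\}$ as a bounded sequence in the reflexive Banach space $H^{1,p}(B\cap X^n,\mu)$ and to pin down its weak limit by pushing the convergence down to $L^p$, where it is transparent. First I would translate the hypothesis using the functional-analytic dictionary already established: by Lemma~\ref{lemma_weakstarlip}, the weak-$*$ convergence $f_a \wsto 0$ in $\Lipb(X^n)$ is equivalent to pointwise convergence $f_a \to 0$ together with a uniform bound $M := \sup_a \|f_a\|_{\Lip} < \infty$. Since $\mu$ is doubling, \eqref{eq_doubling} forces every ball to have finite measure, so $\mu(B\cap X^n) \le \mu(B) < \infty$.

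From this uniform bound over a set of finite measure I would extract two facts. On one hand, $|f_a|^p \le M^p$ is integrable on $B\cap X^n$, so dominated convergence gives $f_a \to 0$ strongly in $L^p(B\cap X^n,\mu)$. On the other hand, the constant $L(f_a)\le M$ is an upper gradient of $f_a$, so by the minimality clause of Theorem~\ref{thm_sobolevreflexivity}(2) (see also \eqref{eq_lipconsts}--\eqref{eq_lipminupgrad}) the minimal upper gradient obeys $g_{f_a}\le M$ $\mu$-a.e.; together with $\|f_a\|_\infty\le M$ this yields $\|f_a|B\|_{1,p}\le 2M\,\mu(B\cap X^n)^{1/p}$ uniformly in $a$. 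Hence $\{f_a|B\}$ is bounded in $H^{1,p}(B\cap X^n,\mu)$.

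Because $p>1$, Theorem~\ref{thm_sobolevreflexivity}(1) makes the Sobolev space reflexive, so the bounded sequence $\{f_a|B\}$ is weakly sequentially precompact. By the standard subsequence criterion in a reflexive space, it then suffices to show that every weakly convergent subsequence has limit $0$. So suppose $f_{a_k}|B \wkto u$ in $H^{1,p}(B\cap X^n,\mu)$. The inclusion into $L^p(B\cap X^n,\mu)$ is bounded---indeed $\|\cdot\|_{\mu,p}\le\|\cdot\|_{1,p}$ by \eqref{eq_sobolevnorm}---and hence weak-to-weak continuous, so $f_{a_k}|B \wkto u$ in $L^p$ as well. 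But $f_{a_k}|B\to 0$ strongly, and therefore weakly, in $L^p$, so uniqueness of weak $L^p$ limits gives $u=0$. This forces $f_a|B \wkto 0$ and proves the lemma.

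The delicate point, and the one I expect to be the main obstacle, is the appeal to reflexivity: Theorem~\ref{thm_sobolevreflexivity}(1) is stated for $p$-PI spaces, whereas the measurable piece $B\cap X^n$ need not itself carry a Poincar\'e inequality. Since the norm \eqref{eq_sobolevnorm} and the upper-gradient identity \eqref{eq_lipminupgrad} are local---they only test functions $\mu$-almost everywhere---the natural remedy is to realize $H^{1,p}(B\cap X^n,\mu)$ as a restriction (quotient) of the reflexive space $H^{1,p}(X,\mu)$ and to invoke that quotients of reflexive spaces are reflexive. Making this identification precise, rather than the soft functional-analytic endgame above, is where the genuine work of the proof lies.
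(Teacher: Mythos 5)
Your proof is correct, and while it shares the paper's overall skeleton---the uniform Lipschitz bound gives boundedness in $H^{1,p}(B\cap X^n,\mu)$, reflexivity (Theorem \ref{thm_sobolevreflexivity}, $p>1$) gives weak sequential compactness, and the subsequence criterion reduces everything to identifying subsequential weak limits---it identifies the limit by a genuinely different and cleaner mechanism. The paper extracts a weak limit $h$ of a sub-subsequence and then applies Mazur's lemma to produce norm-convergent convex combinations, hence a $\mu$-a.e.\ convergent further subsequence; to conclude $h=0$ it must invoke a sharpened ``diagonal'' form of Mazur's lemma (a footnote in the paper's proof) ensuring the convex combinations can simultaneously be taken to converge pointwise to $0$. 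You bypass all of this: dominated convergence (pointwise convergence plus the uniform bound $M$ on a set of finite measure) gives $f_a|B \to 0$ \emph{strongly} in $L^p(B\cap X^n,\mu)$, and since the inclusion $H^{1,p}(B\cap X^n,\mu) \hookrightarrow L^p(B\cap X^n,\mu)$ is norm-bounded (as $\|\cdot\|_{\mu,p}\le\|\cdot\|_{1,p}$ by \eqref{eq_sobolevnorm}) and therefore weak-to-weak continuous, any subsequential weak $H^{1,p}$ limit must agree with the $L^p$ limit, namely $0$. This buys a shorter argument with no refined Mazur machinery, at no loss of generality. As for the ``delicate point'' you flag---that Theorem \ref{thm_sobolevreflexivity}(1) is stated for $p$-PI spaces while $(B\cap X^n,\rho,\mu)$ need not itself support a Poincar\'e inequality---note that the paper's proof invokes weak compactness in $H^{1,p}(B\cap X^n,\mu)$ in exactly the same unguarded way, so this is a shared reliance rather than a gap specific to your argument; your proposed remedy (realizing the space inside $H^{1,p}(X,\mu)$, or equivalently embedding $u \mapsto (u,d^nu)$ onto a closed subspace of $L^p \oplus L^p(\R^{k})$ via the Cheeger differential, closed subspaces of reflexive spaces being reflexive) is a reasonable way to make either proof airtight.
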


\begin{proof}
Let $B$ be a ball in $X$ so that $B \cap X^n$ has positive $\mu$-measure.  In what follows, we write $B^n := B \cap X^n$
and for each $a \in \N$, we write $f_a = f_a|B$.

Let $\{f_{a_b}\}_{b=1}^\infty$ be any subsequence of $\{f_a\}_{a=1}^\infty$.  Note that $\{f_{a_b}\}_{b=1}^\infty$ is a bounded set in $H^{1,p}(B^n,\mu)$ because by equations \eqref{eq_lipconsts} and \eqref{eq_lipminupgrad}, we have 
%the estimates
\begin{eqnarray*}
\|f\|_{\mu,\,p} &:=& 
\left[ \int_{B^n} |f|^p \,d\mu \right]^{1/p} \;\leq\; 
\|f\|_\infty \cdot (\diam(B))^{1/p} \\
\|g_f\|_{\mu,\,p} &:=& 
\left[ \int_{B^n} |g_f|^p \,d\mu \right]^{1/p} \;\leq\;
\left[ \int_{B^n} L(f)^p \,d\mu \right]^{1/p} \;=\; 
L(f) \cdot (\diam(B))^{1/p}
\end{eqnarray*}
%hold 
for each $f \in H^{1,p}(X,\mu)$.  Therefore, for $C' := C \cdot (\diam(B))^{1/p}$, we obtain 
%the bound
$$
\|f_{a_b}\|_{H^{1,p}(B^n,\mu)} \;\leq\; C'.
$$

By Theorem \ref{thm_sobolevreflexivity} and weak compactness, there is a further subsequence $h_c := f_{a_{b_c}}$, $c \in \N$, and a function $h \in H^{1,p}(B^n,\mu)$ so that $h_c \wkto h$ in $H^{1,p}(B^n,\mu)$.  We now invoke Mazur's Lemma, so there is a sequence of (finite) convex combinations $\tilde{h}_c := \sum_\a \lambda_{c\a} \cdot h_\a$ which converge in norm to $h$ in $H^{1,p}(B^n,\mu)$.  In particular, $\tilde{h}_c$ converges in norm to $h$ in $L^p(B^n,\mu)$, so there is a further subsequence $\{\tilde{h}_{c_d}\}_{d=1}^\infty$ that converges $\mu$-a.e.\ to $h$ on $B^n$.

By hypothesis, $f_a \wsto 0$ in $\Lip_b(X^n)$.  Since $\|f_a\|_{\Lip} \leq C$, it follows from Lemma \ref{lemma_weakstarlip} that $f_a$ converges pointwise to $0$, and therefore $h_c$ also converges pointwise to $0$.  A sharper form of Mazur's Lemma\footnote{This fact follows from applying the usual form of Mazur's Lemma to each of the sequences $\{f_a\}_{a=\a}^\infty$, for $\a \in \N$, and then taking an appropriate ``diagonal'' subsequence.} also assures that $\tilde{h}_c$ converges pointwise to $0$, and therefore $\tilde{h}_{c_d}$ also converges pointwise to $0$.  This shows that $h=0$ $\mu$-a.e.\ on $B^n$, so every subsequence of $\{f_a\}_{a=1}^\infty$ has a further subsequence which converges weakly to $0$ in $H^{1,p}(B^n,\mu)$.  It follows that $f_a \wkto 0 $ in $H^{1,p}(B,\mu)$.
\end{proof}

\begin{proof}[Proof of Theorem \ref{thm_diffderiv}]
Let $n, k \in \N$ be as given in Theorem \ref{thm_cheegerrademacher}, and let $\d^n_i : \Lip_b(X^n) \to \Li(X^n,\mu)$ be the map from formula \eqref{eq_PIderiv}.  

By the uniqueness of Cheeger differentials, the map $f \mapsto d^nf$ is linear, so each $\d^n_i$ is linear.  It is known that $d^n$ satisfies the Leibniz rule \cite[Eqn 4.43]{Cheeger}, and by a similar argument as above, $\d^n_i$ also satisfies the Leibniz rule.
It remains to show that $\d^n_i$ is continuous.  By Lemma \ref{lemma_ctyderiv} and Remark \ref{rmk_doubling-separable}, it suffices to check weak-$*$ convergent sequences in $\Lip_b(X^n)$. 

To this end, let $\{f_a\}_{a=1}^\infty \subset \Lip_b(X^n)$ satisfy $f_a \wsto 0$ and $\sup_\a \|f_a\|_{\Lip} \leq C$, for some $C \in (0,\infty)$.  Fix $p \in (1,\infty)$, and let $q = p/(p-1)$.  As a shorthand, we suppress the notation $d\mu$ below.

Let $\psi \in L^1(X,\mu)$ be given, and fix $\e > 0$ and $x_0 \in X^n$.  Since $\int_X |\psi|$ is finite, there exists $R > 0$ so that 
\begin{equation} \label{eq_est1}
\int_{X \setminus B(x_0,R)} |\psi| \;\leq\; \frac{\e}{3C'}.
\end{equation}
Put $B = B(x_0,R)$.  Since $\mu(B) < \infty$, $L^q(B,\mu)$ is a dense subset of $L^1(B,\mu)$, so there exists $\varphi \in L^q(B,\mu)$ so that
\begin{equation} \label{eq_est2}
\int_B|\psi - \varphi| \;\leq\; \frac{\e}{3C'}.
\end{equation}
Now consider the linear operator given by
$$
T_\varphi(f) \;:=\; \int_{B \cap X^n} \varphi \cdot \d^n_if.
$$  
Put $C'' := C(n) \cdot \|\varphi | B\|_{\mu,\,q}$.  From formula \eqref{eq_lipminupgrad} and Lemma \ref{lemma_derivlipbd} we obtain
$$
|T_\varphi(f)| \;\leq\;
\int_B |\varphi| \cdot |\d^n_if| \;\leq\;
C(n) \cdot \int_B |\varphi| \cdot g_f \;\leq\;
C'' \cdot \|f\|_{H^{1,p}(B,\mu)}
$$
for all $f \in \Lip_b(X^n)$, so $T_\varphi$ is a bounded linear functional on $\Lip_b(X^n) \cap H^{1,p}(B,\mu)$.  By the Hahn-Banach theorem, it extends to an element in $[H^{1,p}(B \cap X^n,\mu)]^*$, which we also call $T_\varphi$.  

From our hypothesis we have $f_a \wsto 0$ in $\Lip_b(X^n)$, so by Lemma \ref{lemma_weaksobolevconv}, we obtain $f_a \wkto 0$ in $H^{1,p}(B,\mu)$.  This implies that, for sufficiently large $a \in \N$,
\begin{equation} \label{eq_est3}
|T_\varphi(f_a)| \;=\; 
\left| \int_B \varphi \cdot \d^n_if_a \right| \;\leq\; 
\frac{\e}{3}.
\end{equation}
We now combine estimates \eqref{eq_est1} through \eqref{eq_est3} to obtain
\begin{eqnarray*}
\left| \int_X \psi \cdot \d^n_if_a \right| &\leq&
\left| \int_{X \setminus B} \psi \cdot \d^n_if_a \right| \,+\,
\left| \int_B (\psi - \varphi) \cdot \d^n_if_a \right| \,+\,
\left| \int_X \varphi \cdot \d^n_if_a \right| \\ &\leq&
C' \cdot \int_{X \setminus B} |\psi| \,+\,
C' \cdot \int_B |\psi - \varphi| \,+\,
\left|\int_B \varphi \cdot \d^n_if_a\right| \\ &\leq&
C' \cdot \frac{\e}{3C'} + C' \cdot \frac{\e}{3C'} + \frac{\e}{3} \;=\; \e.
\end{eqnarray*}
Since the above estimates hold for all $\e > 0$, we obtain $\int_X \psi \cdot \d^n_if_a \to 0$.  However, $\psi \in L^1(X,\mu)$ was also arbitrary, which implies $\d^n_if_a \wsto 0$ in $\Li(X^n,\mu)$.
\end{proof}

\subsection{Geometric Rigidity and Cheeger's Conjecture}

As discussed in the introduction, Theorem \ref{thm_cheegerrademacher} 
indicates that $p$-PI spaces have good infinitesmal geometry, in the  
sense of a differentiability property for Lipschitz functions.

Regarding the {\em global} geometric structure of such spaces, the 
following result was proven by Cheeger \cite[Thm 14.2]{Cheeger}.  
In addition to the hypotheses for differentiability, as in Theorem 
\ref{thm_cheegerrademacher}, one further requires that the coordinate 
charts remain nondegenerate in a measure-theoretic way.

\begin{thm}[Cheeger, 1999] \label{thm_nonembed}
Let $(X,d)$ be a complete metric space that supports a doubling measure 
and a $p$-Poincar\'e inequality, for some $1 < p < \infty$.  Assume 
in addition that $X$ admits an isometric embedding $\iota : X \to \R^N$ 
for some $N \in \N$.  If $\H^{k(n)}(X^n) > 0$, then $\iota(X^n)$ is 
$k(n)$-rectifiable.
\end{thm}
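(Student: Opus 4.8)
The plan is to use the differentiable structure of Theorem~\ref{thm_cheegerrademacher} to realize the chart map $\xi^n$ as a bi-Lipschitz parametrization of $X^n$ (equivalently of $\iota(X^n)$) over pieces of $\R^k$, where $k = k(n)$. Since $\iota = (\iota_1,\ldots,\iota_N)$ is an isometric embedding, each component $\iota_l$ lies in $\Lip(X)$ with $L(\iota_l) \le 1$, so by Theorem~\ref{thm_cheegerrademacher} each $\iota_l$ is differentiable on $X^n$ with respect to $\xi^n$, with differential $d^n\iota_l$. First I would assemble these into the $\mu$-measurable matrix field $A(x) := (d^n\iota_l(x))_{l=1}^N$ whose $l$-th row is $d^n\iota_l(x) \in \R^k$. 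By Lemma~\ref{lemma_derivlipbd} and the bound $\Lip[\iota_l](x) \le L(\iota_l) \le 1$, we have $|d^n\iota_l(x)| \le C(n)$, and hence the uniform operator-norm bound $\|A(x)\| \le C'$ on $X^n$, with $C' := C(n)\sqrt{N}$.

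The crucial step combines the differentiability relation \eqref{eq_diffcond} (summed over $l = 1,\ldots,N$) with the isometry identity $|\iota(y) - \iota(x)| = \rho(x,y)$ to obtain, for $\mu$-a.e.\ $x \in X^n$,
\[
\lim_{y \to x} \frac{\big|A(x)\big(\xi^n(y) - \xi^n(x)\big)\big|}{\rho(x,y)} \;=\; 1.
\]
Since $|A(x)v| \le C'\,|v|$, this yields an infinitesimal lower Lipschitz bound $|\xi^n(y) - \xi^n(x)| \ge (2C')^{-1}\rho(x,y)$ for all $y$ within some scale $r(x) > 0$ of $x$. Together with the fact that $\xi^n$ is globally Lipschitz, this is precisely a pointwise bi-Lipschitz estimate for the chart map, valid $\mu$-a.e.

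Next I would upgrade this pointwise estimate to genuine bi-Lipschitz behavior on definite pieces. Writing $Y_m := \{x \in X^n : r(x) \ge 1/m\}$, the sets $Y_m$ exhaust $X^n$ up to a $\mu$-null set; subdividing each $Y_m$ into countably many Borel pieces $P_j$ of diameter less than $1/m$ produces a family on which $\xi^n|_{P_j}$ is bi-Lipschitz onto $\xi^n(P_j) \subset \R^k$ (the upper bound is the global Lipschitz constant of $\xi^n$, and the lower bound holds because any two points of $P_j$ lie within one another's differentiation scale). Consequently $g_j := \iota \circ (\xi^n|_{P_j})^{-1}$ is bi-Lipschitz from $\xi^n(P_j) \subset \R^k$ into $\R^N$, and after passing to compact subsets the decomposition $\iota(X^n) = \iota(Z) \cup \bigcup_j \iota(P_j)$ matches the form \eqref{eq_defnrect}, with the sets $\iota(P_j)$ as the bi-Lipschitz pieces. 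The hypothesis $\H^{k(n)}(X^n) > 0$ guarantees that at least one piece is genuinely $k$-dimensional, so that the clause $m_k(A_i) > 0$ in \eqref{eq_defnrect} can be met; if it failed, $\iota(X^n)$ would be $\H^k$-null and the conclusion would hold trivially.

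I expect two points to demand the most care. The first is the measure-theoretic passage of the third step: the differentiation scale $r(x)$ in \eqref{eq_diffcond} is not uniform, so the extraction of the pieces $P_j$ must be carried out through an Egorov-type argument, and the doubling property is what permits the subdivision of each $Y_m$ into countably many small-diameter Borel sets. The second, and more delicate, is reconciling the $\mu$-a.e.\ nature of the differentiability with the $\H^k$-measure-theoretic definition of rectifiability: the exceptional set $Z$ is a priori only $\mu$-null, so one must show that $\H^k(\iota(Z)) = 0$, which reduces to the comparability of $\mu$ and $\H^k$ on the chart $X^n$ --- precisely where the nondegeneracy hypothesis $\H^{k(n)}(X^n) > 0$ does essential work. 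Finally, I note that in the range $k \le 2$ the conclusion follows alternatively from the rigidity results of this paper: the derivations $\{\d^n_i\}_{i=1}^k$ of Theorem~\ref{thm_diffderiv} form a rank-$k$ module on $X^n$ (linearly independent by Corollary~\ref{cor_coordvector}) which pushes forward under the bi-Lipschitz map $\iota$ to a rank-$k$ module on $\iota(X^n)$, whence Proposition~\ref{prop_2sets} forces the purely unrectifiable part to be $\H^k$-negligible.
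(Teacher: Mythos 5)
First, a point of context: the paper does not prove this statement at all --- Theorem \ref{thm_nonembed} is imported verbatim from \cite[Thm 14.2]{Cheeger} --- so your attempt must stand on its own. Its first half does: differentiating each component $\iota_l$ on the chart, combining \eqref{eq_diffcond} with $|\iota(y)-\iota(x)|=\rho(x,y)$ and the bound of Lemma \ref{lemma_derivlipbd} to get $|\xi^n(y)-\xi^n(x)| \geq (2C')^{-1}\rho(x,y)$ at scales $\rho(x,y)\leq r(x)$, and then running the Egorov-type decomposition (legitimate, since $X$ is separable by Remark \ref{rmk_doubling-separable} and the sets $Y_m$ can be defined directly by the validity of the estimate at scale $1/m$, hence are Borel) is a correct and standard way to produce Borel sets $P_j$ with $X^n = Z \cup \bigcup_j P_j$, $\mu(Z)=0$, on which $\xi^n$ is bi-Lipschitz with uniform constants. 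This proves that $\iota(X^n)\setminus\iota(Z)$ is covered by countably many bi-Lipschitz images of subsets of $\R^k$.

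The genuine gap is the final step, and it is not a technicality: you must show $\H^k(\iota(Z))=0$, but you only know $\mu(Z)=0$, and nothing in Theorem \ref{thm_cheegerrademacher} relates these two measures. Your proposed resolution --- that this ``reduces to the comparability of $\mu$ and $\H^k$ on the chart, precisely where the hypothesis $\H^{k(n)}(X^n)>0$ does essential work'' --- is unsupported: positivity of $\H^k(X^n)$ implies nothing about the absolute continuity $\H^k\lfloor X^n \ll \mu\lfloor X^n$. The actual role of that hypothesis is much more modest: under the paper's definition of rectifiability \eqref{eq_defnrect}, every $k$-rectifiable set contains a bi-Lipschitz image of a compact set of positive $m_k$-measure and so has positive $\H^k$-measure; the hypothesis is therefore \emph{necessary} for the conclusion (and your remark that its failure would make the conclusion ``hold trivially'' has this backwards). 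Upgrading a statement valid at $\mu$-a.e.\ point of $X^n$ to one valid at $\H^k$-a.e.\ point of $\iota(X^n)$ is exactly the hard content of \cite[Thm 14.2]{Cheeger}, which is why Cheeger needs the tangent-cone and density machinery of his Sections 13--14; in the special case where $\mu$ is lower Ahlfors $k$-regular, one does get $\H^k \leq C\mu$ by a covering argument, your gap closes instantly, and the result is \cite[Thm 13.12]{Cheeger} --- cited separately in this paper precisely because it is the easier case. Your fallback for $k\leq 2$ via the paper's rigidity results has a gap of the same nature: Theorem \ref{thm_1sets} and Proposition \ref{prop_2sets} apply only to measures concentrated on $k$-sets, i.e.\ sets of $\sigma$-finite $\H^k$-measure, and $\sigma$-finiteness of $\H^k$ on $\iota(X^n)$ is not known a priori --- it is part of what rectifiability asserts.
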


In light of this theorem, Cheeger has conjectured that the images of 
coordinate charts are always measurably non-degenerate \cite[Conj 
4.63]{Cheeger}.  

\begin{conj}[Cheeger, 1999] \label{conj_cheegermeas}
Let $(X,\rho,\mu)$, $\{X_n\}_{n=1}^\infty$ and $\xi^n : X \to \R^{k(n)}$
be as in Theorem \ref{thm_cheegerrademacher}.  Then
$\H^{k(n)}(\xi^n(X^n)) > 0$.
\end{conj}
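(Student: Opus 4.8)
The plan is to prove the conjecture in the two-dimensional case, i.e. under the standing assumption $k(n) \le 2$, by converting the Cheeger differentiable structure on $X^n$ into $k(n)$ linearly independent derivations on $\R^{k(n)}$ and then invoking the Euclidean rigidity of Theorem \ref{thm_rigiditylowdim}. First I would argue by contradiction: assume $\mu(X^n) > 0$ yet $\H^{k}(\xi^n(X^n)) = 0$, writing $k := k(n) \le 2$. By Theorem \ref{thm_diffderiv} the operators $\d_i^n f := d^n f \bullet e_i$ are genuine derivations in $\U(X^n,\mu)$ for $1 \le i \le k$, and combining Corollary \ref{cor_coordvector} with the extension of $\d_i^n$ to $\Lip(X^n)$ afforded by Theorem \ref{thm_extlip}, they satisfy $\d_i^n \xi_j^n = d^n\xi_j^n \bullet e_i = e_j \bullet e_i = \delta_{ij}$ $\mu$-a.e.\ on $X^n$.

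Next I would localize in order to produce a finite measure suitable for the planar results. Choosing a ball $B$ with $\mu(B \cap X^n) > 0$ and setting $Y := B \cap X^n$ (a bounded, separable set of finite $\mu$-measure), I restrict each $\d_i^n$ to $Y$ by locality (Theorem \ref{thm_locality}) and push forward along $\pi := \xi^n|_Y$ using Lemma \ref{lemma_pushfwd}. Writing $\nu := \pi_\#(\mu \lfloor Y)$, this is a nonzero finite Borel (hence Radon) measure on $\R^k$ concentrated on $\xi^n(Y) \subset \xi^n(X^n)$; since $\H^k$ and $m_k$ share the same null sets on $\R^k$, the hypothesis $\H^k(\xi^n(X^n)) = 0$ makes $\nu$ singular to Lebesgue measure. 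The pushforwards $\eta_i := \pi_\# \d_i^n \in \U(\R^k,\nu)$ act on the coordinate functions $x_j$ by the defining identity \eqref{eq_pushfwd}: since $x_j \circ \pi = \xi_j^n$ on $Y$, for every $h \in L^1(\R^k,\nu)$ one computes
$$
\int_{\R^k} h \cdot \eta_i x_j \, d\nu \;=\; \int_Y (h \circ \pi) \cdot \d_i^n \xi_j^n \, d\mu \;=\; \delta_{ij} \int_{\R^k} h \, d\nu,
$$
so that $\eta_i x_j = \delta_{ij}$ $\nu$-a.e. (Because $\nu$ is concentrated on the bounded set $\xi^n(Y)$, one may replace each unbounded $x_j$ by its bounded McShane extension off a large ball containing $\xi^n(Y)$, so that \eqref{eq_pushfwd} and Theorem \ref{thm_extlip} apply verbatim.)

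From $\eta_i x_j = \delta_{ij}$ the linear independence of $\{\eta_i\}_{i=1}^k$ in $\U(\R^k,\nu)$ is immediate: if $\sum_i \lambda_i \eta_i = 0$ with $\lambda_i \in \Li(\R^k,\nu)$, then applying the relation to $x_j$ gives $\lambda_j = 0$ for each $j$. Hence $\U(\R^k,\nu)$ has rank at least $k$. On the other hand $\nu$ is nonzero and singular to Lebesgue measure, so for $k = 1$ Lemma \ref{lemma_dim1} forces $\U(\R,\nu) = 0$, while for $k = 2$ Theorem \ref{thm_2dimLI} forces $\U(\R^2,\nu)$ to have rank at most $1$; either way the rank is strictly less than $k$. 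This contradiction shows $\H^{k(n)}(\xi^n(X^n)) > 0$ whenever $\mu(X^n) > 0$, which is the two-dimensional case of the conjecture.

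The step I expect to be the main obstacle is the measure-theoretic bookkeeping that makes the pushforward legitimate: ensuring that $\nu$ genuinely satisfies the hypotheses of the Euclidean rigidity theorems, and that the Cheeger-differential derivations $\d_i^n$, which a priori act on $\Lip(X^n)$ rather than on $\Lipb(X^n)$, interact correctly with the unbounded coordinate functions $\xi_j^n$ and their images $x_j$. Passing to the bounded piece $Y$ is what resolves both difficulties at once, since it makes $\mu \lfloor Y$ finite and confines the whole argument to bounded sets, where Theorem \ref{thm_extlip} lets one treat $\xi_j^n$ and $x_j$ as bounded Lipschitz functions without altering any $\mu$- or $\nu$-almost-everywhere identity. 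Everything else is a direct transcription of the rank bounds already established for singular measures on $\R$ and $\R^2$.
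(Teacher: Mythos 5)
Your proposal is correct and follows essentially the same route as the paper: you push forward the Cheeger-differential derivations $\d_i^n$ of Theorem \ref{thm_diffderiv} along $\xi^n$ via Lemma \ref{lemma_pushfwd}, use $\d_i^n\xi_j^n = \delta_{ij}$ (Corollary \ref{cor_coordvector}) to produce $k$ linearly independent derivations on $\R^k$, and play this off against the rigidity of singular measures (Lemma \ref{lemma_dim1} for $k=1$, Theorem \ref{thm_2dimLI} for $k=2$) --- precisely the content of Lemma \ref{lemma_PIpushfwdLI} and its corollary, recast as a contradiction argument. Your preliminary restriction to $Y = B \cap X^n$, which makes the pushforward measure finite (hence Radon) and confines everything to bounded Lipschitz functions, is a careful technical refinement of, not a departure from, the paper's argument.
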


Since Lipschitz maps do not increase Hausdorff dimension, i.e.\
$$
\H^{k(n)}(\xi^n(X^n)) \;\leq\; L(\xi^n) \, \H^{k(n)}(X^n)
$$
the validity of Conjecture \ref{conj_cheegermeas} is consistent with
the hypothesis of Theorem \ref{thm_nonembed}.

Several special cases of Conjecture \ref{conj_cheegermeas} are known.  
Cheeger has proven it under the hypothesis that $\mu$ is \emph{lower
Ahlfors regular} \cite[Thm 13.12]{Cheeger} with exponent $k(n)$; that
is, there exists $C \geq 1$ so that
$$
C^{-1} r^{k(n)} \;\leq\; \mu(B(x,r))
$$
holds, for all $x \in X$ and all $r > 0$.
Keith has also proven the conjecture for the case $k = 1$, but without 
additional hypotheses \cite{Keith}.

Using results from Section \ref{sect_2dim}, we now prove the conjecture 
for $k=2$ and without additional hypotheses.  To begin, we prove a 
lower bound for the rank of $\U(X,\mu)$.

\begin{lemma} \label{lemma_PIderivsLI}
Let $(X,\rho,\mu)$ be a $p$-PI space.  If $\{\d_i^n\}_{i=1}^k$ are the derivations from formula \eqref{eq_PIderiv}, then they form a linearly independent set in $\U(X^n,\mu)$.
\end{lemma}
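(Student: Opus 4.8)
The plan is to exploit the duality between the Cheeger coordinates $\{\xi_j^n\}_{j=1}^k$ and the derivations $\{\d_i^n\}_{i=1}^k$ that is recorded in Corollary \ref{cor_coordvector}. Recall that $\d_i^nf = d^nf \bullet e_i$, so evaluating on the coordinate functions and applying Corollary \ref{cor_coordvector} gives
$$
\d_i^n\xi_j^n \;=\; d^n\xi_j^n \bullet e_i \;=\; e_j \bullet e_i \;=\; \delta_{ij}
$$
$\mu$-a.e.\ on $X^n$, where $\delta_{ij}$ is the Kronecker delta. In other words, the $\xi_j^n$ pair with the $\d_i^n$ exactly as a dual basis, and once this identity is in hand the linear-independence statement is immediate.

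First I would dispose of one technical point: the coordinates $\xi_j^n$ are Lipschitz but need not be bounded, whereas a relation in $\U(X^n,\mu)$ is a priori only an identity of operators on $\Lip_b(X^n)$. This is handled by Theorem \ref{thm_extlip}, which extends each derivation to a linear, local operator on $\Lip(X^n)$ agreeing with $\d_i^n$ there; since the extension of a linear combination is the corresponding combination of extensions, any relation $\sum_i \lambda_i \d_i^n = 0$ on $\Lip_b(X^n)$ persists on all of $\Lip(X^n)$. Equivalently, one may fix an arbitrary ball $B$, apply the relation to the bounded McShane extension $(\xi_j^n|B)^B \in \Lip_b(X^n)$, and use the locality of the Cheeger differential together with Corollary \ref{cor_coordvector} to retain $\d_i^n(\xi_j^n|B)^B = \delta_{ij}$ on $B \cap X^n$; letting $B$ exhaust $X$ recovers the identity everywhere.

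With this justification, suppose $\{\lambda_i\}_{i=1}^k \subset \Li(X^n,\mu)$ satisfy $\sum_{i=1}^k \lambda_i \d_i^n = 0$ in $\U(X^n,\mu)$. Fixing $j$ and evaluating this relation on $\xi_j^n$, the displayed identity yields
$$
0 \;=\; \sum_{i=1}^k \lambda_i \cdot \d_i^n\xi_j^n \;=\; \sum_{i=1}^k \lambda_i\,\delta_{ij} \;=\; \lambda_j
$$
$\mu$-a.e.\ on $X^n$. Since $j \in \{1,\ldots,k\}$ was arbitrary, every $\lambda_j$ vanishes, which is precisely linear independence in the sense of Definition \ref{defn_LI}.

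I expect the genuine content of this lemma to reside entirely in Corollary \ref{cor_coordvector} (itself a consequence of the uniqueness of Cheeger differentials), rather than in the argument above: once the dual pairing $\d_i^n\xi_j^n = \delta_{ij}$ is available, the linear algebra collapses in a single line. The only place requiring care is the passage from bounded to general Lipschitz test functions in the second paragraph, and because the Cheeger differential is defined pointwise and is local while Theorem \ref{thm_extlip} supplies the matching extension, this step is routine rather than a real obstacle.
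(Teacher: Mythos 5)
Your proposal is correct and follows essentially the same route as the paper: both proofs reduce linear independence to the dual pairing $\d_i^n\xi_j^n = \delta_{ij}$ from Corollary \ref{cor_coordvector}, handle the unboundedness of the coordinates by testing on balls via the extension machinery of Theorem \ref{thm_extlip}, and then read off $\lambda_j = 0$ directly. The only cosmetic differences are that the paper phrases the argument as a contradiction and leaves the bounded-to-unbounded passage implicit in the line ``$\chi_B \cdot \d'g = 0$ for all $g \in \Lip(X)$,'' whereas you spell that step out.
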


\begin{proof}%[Proof of Corollary \ref{cor_PIderivsLI}]
We argue by contradiction.  Suppose there exist $\{\lambda_i\}_{i=1}^k$ in  $\Li(X^n,\mu)$, not all zero, so that $\d' := \sum_i \lambda_i \d_i^n$ is zero.
As a result, $\chi_B \cdot \d'g = 0$, for all $g \in \Lip(X)$ and for all balls $B$ which meet $X^n$.

In particular, let $g = \xi^n_i$.  From Corollary \ref{cor_coordvector}, it follows that $\d_i^n\xi^n_i = 1$ and $\d^n_i\xi^n_j = 0$ whenever $i \neq j$.  Computing further, %we obtain
$$
0 \;=\; 
\chi_B \cdot \d'\xi^n_j \;=\; 
\chi_B \cdot \sum_{i=1}^k \lambda_i \cdot \d^n_i\xi^n_j \;=\; \chi_B \cdot \lambda_i.
$$
So each $\lambda_i$ is zero on every ball $B$, and $\lambda_i = 0$ holds $\mu$-a.e.\ on $X^n$.
\end{proof}

\begin{lemma} \label{lemma_PIpushfwdLI}
Let $(X,\rho,\mu)$ be a $p$-PI space, and let $\{X_n\}_{n=1}^\infty$ and $\xi^n : X \to \R^k$ be as in Theorem \ref{thm_cheegerrademacher}.  Then $\U(\R^k,\xi^n_\#(\mu \lfloor X^n))$ has rank at least $k$.
%Then for $\nu_n := \xi^n_\#(\mu \lfloor X^n)$, the module $\U(\R^k,\nu_n)$ has rank at least $k$.
\end{lemma}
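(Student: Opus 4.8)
The plan is to transport the linearly independent derivations $\{\d_i^n\}_{i=1}^k \subset \U(X^n,\mu)$ furnished by Lemma \ref{lemma_PIderivsLI} to $\R^k$ by pushing them forward along $\xi^n$, and then to show that the pushforwards remain linearly independent. Writing $\nu := \xi^n_\#(\mu\lfloor X^n)$, Lemma \ref{lemma_pushfwd} produces derivations $\eta_i := \xi^n_\#\d_i^n \in \U(\R^k,\nu)$ for $1 \le i \le k$. It therefore suffices to verify that $\{\eta_i\}_{i=1}^k$ is a linearly independent set in the $\Li(\R^k,\nu)$-module $\U(\R^k,\nu)$.

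The key computation is to evaluate each $\eta_i$ on the coordinate functions $x_1,\ldots,x_k$ of $\R^k$. First I would record the two inputs: since $x_j$ is the $j$-th coordinate projection and $\xi^n = (\xi^n_1,\ldots,\xi^n_k)$, we have $x_j \circ \xi^n = \xi^n_j$; and by Corollary \ref{cor_coordvector} together with formula \eqref{eq_PIderiv}, the identity $\d_i^n\xi^n_j = d^n\xi^n_j \bullet e_i = e_j\bullet e_i = \delta_{ij}$ holds $\mu$-a.e.\ on $X^n$. Feeding these into the defining relation \eqref{eq_pushfwd} of the pushforward, for every $h \in L^1(\R^k,\nu)$ we obtain
\[
\int_{\R^k} h \cdot \eta_i(x_j)\,d\nu \;=\; \int_{X^n}(h\circ\xi^n)\cdot\d_i^n(x_j\circ\xi^n)\,d\mu \;=\; \delta_{ij}\int_{X^n}(h\circ\xi^n)\,d\mu \;=\; \delta_{ij}\int_{\R^k}h\,d\nu,
\]
where the last step uses the change-of-variables formula \eqref{eq_changevari}. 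As $h$ is arbitrary, this forces $\eta_i(x_j) = \delta_{ij}$ $\nu$-a.e. Consequently, if $\sum_{i=1}^k\lambda_i\eta_i = 0$ for some $\lambda_i \in \Li(\R^k,\nu)$, then applying this relation to $x_j$ yields $\lambda_j = \sum_i \lambda_i\eta_i(x_j) = 0$ for each $j$, so the set is linearly independent and $\U(\R^k,\nu)$ has rank at least $k$.

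The one point requiring care is that the coordinate functions $x_j$ are unbounded, whereas derivations and the formula \eqref{eq_pushfwd} are stated only on $\Lipb$; I expect this to be the main (though routine) obstacle. The plan is to replace $x_j$ by its truncations $x_j^{(R)} := (-R)\vee(x_j\wedge R) \in \Lipb(\R^k)$ and rerun the computation above with $x_j^{(R)}$ in place of $x_j$: on the set $\{|\xi^n_j| < R\}\cap X^n$ the composition $x_j^{(R)}\circ\xi^n$ agrees with $\xi^n_j$, so by the locality property (Theorem \ref{thm_locality}) the identity $\d_i^n(x_j^{(R)}\circ\xi^n) = \delta_{ij}$ persists there. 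Testing against indicators $h = \chi_S$ with $S \subset \{|x_j| < R\}$ then gives $\eta_i(x_j^{(R)}) = \delta_{ij}$ $\nu$-a.e.\ on $\{|x_j| < R\}$, whence $\lambda_j = 0$ there; letting $R \to \infty$ recovers $\lambda_j = 0$ on all of $\R^k$. Alternatively, one may invoke the extension $\bar\eta_i$ of Theorem \ref{thm_extlip} together with the Chain Rule (Lemma \ref{lemma_chainrule}), which reduces the question of linear independence to the action of the $\eta_i$ on the coordinate functions directly.
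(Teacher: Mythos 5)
Your proposal is correct and follows essentially the same route as the paper: push the derivations of Lemma \ref{lemma_PIderivsLI} forward via Lemma \ref{lemma_pushfwd}, evaluate them on the coordinate functions using formula \eqref{eq_pushfwd}, Corollary \ref{cor_coordvector}, and the change-of-variables formula, and read off the vanishing of the scalars $\lambda_j$. The only cosmetic differences are that you prove the sharper identity $\eta_i(x_j) = \delta_{ij}$ $\nu$-a.e.\ (the paper instead shows $\d_i'x_j = 0$ for $i \neq j$ and $\d_i'x_i \neq 0$ a.e.\ via the set $Z_n := \{\d_i'x_i = 0\}$), and that you handle the unboundedness of $x_j$ explicitly by truncation, where the paper does so implicitly by working with balls and the extension of Theorem \ref{thm_extlip}.
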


\begin{proof}
By Lemma \ref{lemma_PIderivsLI}, the measure $\mu \lfloor X^n$ admits a linearly independent set of $k$ derivations on $X^n$.  We now claim that for $1 \leq i \leq k$, the pushforward derivations $\d_i' := \xi^n_\#\d^n_i$ form a linearly independent set in $\U(\R^k,\nu_n)$, where $\nu_n := \xi^n_\#(\mu \lfloor X^n)$.

Let $\{\lambda_i\}_{i=1}^k$ be functions in $\Li(\R^k,\nu_n)$ so that $\d' := \sum_i \lambda_i \d_i'$ is zero.  This 
implies that for all $f \in \Lip(\R^k)$ and all balls $B$ in $\R^k$, we have $\chi_B \cdot \d' f = 0$.

In particular, put $f = x_j$ and observe that for $i \neq j$, we have $\d_i'x_j = 0$. %holds whenever $i \neq j$.  
Indeed, it follows from Lemma \ref{lemma_pushfwd}, Corollary \eqref{cor_coordvector}, and formula \eqref{eq_PIderiv} that for each $h \in L^1(\R^k,\nu_n)$ and each ball $B$ in $\R^k$,
$$%\begin{eqnarray*}
\int_B h \cdot \d_i'x_j \,d\nu_n \;=\; %&=&
%\int_{(\xi^n)^{-1}(B)} (h \circ \xi^n) \cdot \d_i^n(x_j \circ \xi^n) \,d\mu  \\ &=&
\int_{(\xi^n)^{-1}(B)} (h \circ \xi^n) \cdot \d_i^n\xi^n_j \,d\mu \;=\; 0.
$$%\end{eqnarray*}
Next, put $Z_n := \{\d_i'x_i = 0 \}$ and  $h := \chi_{Z_n}$. %has $\nu_n$-measure zero, and this is proven in a similar way.  If $Z_n$ were to have positive $\mu$-measure, then putting $h := \chi_{Z_n}$, we compute
A similar computation gives
\begin{eqnarray*}
0 \;=\;
\int_B h \cdot \d_i'x_i \,d\nu_n &=&
\int_{(\xi^n)^{-1}(B)} \chi_{(\xi^n)^{-1}(Z_n)} \cdot \d^n_i\xi^n_i \,d\mu \\ &=&
\mu\big((\xi^n)^{-1}(B \cap Z_n)\big) \;=\;
\xi^n_\#\mu(B \cap Z_n).
\end{eqnarray*}
Letting $B = B(0,m)$, for $m \in \N$, we obtain $\xi^n_\#\mu(Z_n) = 0$ and therefore $\d_i'x_i(x) \neq 0$ for $\xi^n_\#\mu$-a.e.\ $x \in \R^k$.  From these observations, we conclude that
$$
0 \;=\; 
\d'x_j \;=\; 
\sum_{i=1}^k \lambda_i \cdot \d_i'x_j \;=\;
\lambda_j \cdot \d_j'x_j
$$
holds, for each $1 \leq j \leq k$, and therefore $\lambda_j = 0$.  This proves the lemma.
\end{proof}

\begin{cor}
If $k = k(n) = 2$, then $\xi^n_\#(\mu \lfloor X^n)$ is a nonzero measure 
on $\R^2$ that is absolutely continuous to Lebesgue $2$-measure.  In 
particular, Conjecture \ref{conj_cheegermeas} and Theorem 
\ref{thm_bilipembed} are true for $k = 2$.
\end{cor}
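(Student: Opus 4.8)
The plan is to feed the rank bound of Lemma~\ref{lemma_PIpushfwdLI} into the rigidity theorem~\ref{thm_rigiditylowdim} to force absolute continuity of the pushforward measure, and then to transport positivity of Hausdorff measure from $\R^2$ back to the chart. Write $\nu_n := \xi^n_\#(\mu \lfloor X^n)$; since $\xi^n$ is Lipschitz and $\mu \lfloor X^n$ is Radon, $\nu_n$ is a Radon measure on $\R^2$. By Lemma~\ref{lemma_PIpushfwdLI} with $k=2$, the module $\U(\R^2,\nu_n)$ has rank at least $2$, so in particular it is nonzero; hence $\Li(\R^2,\nu_n)$ is not the zero ring and $\nu_n \neq 0$, i.e.\ $\mu(X^n) = \nu_n(\R^2) > 0$.

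First I would establish that $\nu_n$ is absolutely continuous to $m_2$. The proof of Theorem~\ref{thm_rigiditylowdim} shows, via Theorem~\ref{thm_2dimLI} applied to a null set carrying the singular part, that any Radon measure on $\R^2$ with nonzero singular part makes every pair of derivations linearly dependent. Since $\U(\R^2,\nu_n)$ contains a linearly independent pair, the singular part of $\nu_n$ must vanish, so $\nu_n$ is absolutely continuous to $m_2$. (Alternatively, one may note that the rank of $\U(\R^2,\nu_n)$ is at most $2$: by the Chain Rule and Corollary~\ref{cor_zerocoord} a derivation on $\R^2$ is determined by the pair $(\d x_1,\d x_2) \in \Li(\R^2,\nu_n)^2$, and over a commutative ring three such pairs are always dependent; the rank is then exactly $2$ and Theorem~\ref{thm_rigiditylowdim} applies verbatim.)

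Next I would deduce Conjecture~\ref{conj_cheegermeas}. The measure $\nu_n$ is concentrated on $\xi^n(X^n)$, since for every $A \subset \R^2$ one has $\nu_n(A) = \mu\big(X^n \cap (\xi^n)^{-1}(A)\big)$, and for $A = \R^2 \setminus \xi^n(X^n)$ this intersection is empty. Thus $\nu_n(\xi^n(X^n)) = \nu_n(\R^2) > 0$. If $\H^2(\xi^n(X^n))$ were zero, then $m_2(\xi^n(X^n)) = 0$ (the two measures agree on $\R^2$ up to a positive constant), and absolute continuity would give $\nu_n(\xi^n(X^n)) = 0$, a contradiction. Hence $\H^2(\xi^n(X^n)) > 0$, which is exactly the assertion of Conjecture~\ref{conj_cheegermeas} for $k=2$.

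Finally I would derive Theorem~\ref{thm_bilipembed} from Cheeger's Theorem~\ref{thm_nonembed}. Because $\xi^n$ is Lipschitz, $\H^2(\xi^n(X^n)) \leq L(\xi^n)^2\,\H^2(X^n)$, so the positivity just obtained yields $\H^2(X^n) > 0$ --- precisely the non-degeneracy hypothesis of Theorem~\ref{thm_nonembed}. That theorem then gives that $\iota(X^n)$ is $2$-rectifiable, which is Theorem~\ref{thm_bilipembed} for charts with $k(n)=2$. I expect the only real obstacle to be the first step, namely reconciling the ``rank at least $2$'' output of Lemma~\ref{lemma_PIpushfwdLI} with the ``rank-$2$'' hypothesis of Theorem~\ref{thm_rigiditylowdim}; once absolute continuity is secured, the remaining measure-theoretic steps are routine.
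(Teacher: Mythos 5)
Your proposal is correct and follows essentially the same route as the paper's proof: rank at least $2$ from Lemma \ref{lemma_PIpushfwdLI} forces $\nu_n := \xi^n_\#(\mu \lfloor X^n)$ to be nonzero and, by contradiction with Theorem \ref{thm_2dimLI} (applied through the singular-part argument of Theorem \ref{thm_rigiditylowdim}), absolutely continuous to $m_2$, whence $m_2(\xi^n(X^n)) > 0$ since $\nu_n$ is concentrated on the image. The only addition is that you spell out the deduction of Theorem \ref{thm_bilipembed} from Cheeger's Theorem \ref{thm_nonembed} via $\H^2(X^n) > 0$, a step the paper leaves implicit.
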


\begin{proof}
Put $\nu_n := \xi^n_\#(\mu \lfloor X^n)$.
By Lemma \ref{lemma_PIpushfwdLI}, the module $\U(\R^k,\nu_n)$ has rank at 
least $k$, so $\nu_n$ must be a nonzero measure.  For $k = 2$ this 
implies that $\nu_n$ is absolutely continuous to $m_2$; supposing 
otherwise, if $\nu_n$ had nonzero Lebesgue singular part, then by Theorem 
\ref{thm_2dimLI}, the rank of $\U(\R^2,\nu_n)$ would be at most $1$.

By the definition of pushforward measure, $\nu_n$ is concentrated on the image set $\xi^n(X^n)$.  Because $\nu_n$ is nonzero, we see that $\nu_n(\xi^n(X^n)) > 0$, and because $\nu_n$ is absolutely continuous to $m_2$, we further obtain $m_2(\xi^n(X^n)) > 0$.
\end{proof}

Lastly, we prove the rigidity theorem for doubling measures on $\R^2$ that 
support a Poincar\'e inequality (as formulated in the Introduction).  We 
begin with a lemma.

\begin{lemma} \label{lemma_dimrank}
Assuming the hypotheses of Theorem \ref{thm_euclabsocty}, let 
$\{X^n\}_{n=1}^\infty$ be the measurable decomposition of $X = \R^2$ from 
Theorem \ref{thm_cheegerrademacher}.  Then $\U(X^n,\mu)$ has rank at most 
$2$, for all $n \in \N$.
\end{lemma}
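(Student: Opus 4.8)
The plan is to reduce the statement to the rigidity of derivations on the Euclidean plane, where the Chain Rule (Lemma \ref{lemma_chainrule}) forces every derivation to be determined by its action on the two coordinate functions. Since $X=\R^2$ carries the standard metric and $\mu$ is doubling, hence Radon, Lemma \ref{lemma_chainrule} applies to $\U(\R^2,\mu)$. By the locality property (Theorem \ref{thm_locality}) we may identify $\U(X^n,\mu)$ with the restriction module $\chi_{X^n}\cdot\U(\R^2,\mu)$, and I prefer to work with the latter: the restriction $\mu\lfloor X^n$ need not assign positive mass to every ball and so need not be Radon in its own right, whereas $\mu$ is.

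Next I would introduce the evaluation map
\[
\Phi : \chi_{X^n}\cdot\U(\R^2,\mu) \longrightarrow \Li(X^n,\mu)^2,
\qquad
\Phi(\chi_{X^n}\d) \;:=\; \big(\chi_{X^n}\,\d x_1,\ \chi_{X^n}\,\d x_2\big).
\]
For every $f\in\Lipb(\R^2)$ the Chain Rule gives $\chi_{X^n}\d f=g_f^1\,\chi_{X^n}\d x_1+g_f^2\,\chi_{X^n}\d x_2$, so $\chi_{X^n}\d$ is completely recovered from $\Phi(\chi_{X^n}\d)$; in particular $\Phi$ is injective, which is precisely the content of Corollary \ref{cor_zerocoord}. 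Since scalar multiplication commutes with evaluation at a fixed Lipschitz function, $\Phi$ is also a homomorphism of $\Li(X^n,\mu)$-modules, and therefore it sends any linearly independent family to a linearly independent one: a relation $\sum_i\lambda_i\,\Phi(\chi_{X^n}\d_i)=0$ yields $\Phi\big(\sum_i\lambda_i\chi_{X^n}\d_i\big)=0$, hence $\sum_i\lambda_i\chi_{X^n}\d_i=0$ by injectivity, so each $\lambda_i=0$ once the $\chi_{X^n}\d_i$ are independent. It thus suffices to prove that the free module $\Li(X^n,\mu)^2$ has rank $2$, i.e.\ admits no linearly independent set of three elements.

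This last fact is the main obstacle, and I would establish it by a pointwise minor computation. Writing three vectors as $v_i=(a_i,b_i)\in\Li(X^n,\mu)^2$ and setting $m_{ij}:=a_ib_j-a_jb_i$, the cofactor identity
\[
m_{23}\,v_1 \;-\; m_{13}\,v_2 \;+\; m_{12}\,v_3 \;=\; 0
\]
holds identically. If some minor, say $m_{12}$, is nonzero on a set of positive $\mu$-measure, then multiplying this identity by $\chi_{\{m_{12}\neq 0\}}$ produces a nontrivial $\Li$-linear relation among $v_1,v_2,v_3$. In the remaining case all three minors vanish $\mu$-a.e., so at $\mu$-a.e.\ point the matrix with rows $v_1,v_2,v_3$ has rank at most $1$; there I would assemble a relation by a measurable case split over $\{v_1\neq 0\}$, $\{v_1=0,\ v_2\neq 0\}$, and $\{v_1=v_2=0\}$, on each piece expressing a nonzero $v_i$ and a parallel $v_j$ through a measurable scalar ratio, and then gluing the partial relations with characteristic functions. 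This yields coefficients in $\Li(X^n,\mu)$, not all zero, that annihilate $v_1,v_2,v_3$.

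Pulling such a nontrivial relation back through $\Phi$ shows that any three elements of $\U(X^n,\mu)$ are linearly dependent, so its rank is at most $2$, uniformly in $n$. I expect the measurable selection in the rank-at-most-$1$ case to be the only genuinely delicate step; the reduction via the Chain Rule and locality, and the preservation of independence under $\Phi$, are formal.
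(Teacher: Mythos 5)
Your proposal is correct, and at its core it runs on the same two ingredients as the paper's proof: the Chain Rule reduction to the action on the coordinate functions (Corollary \ref{cor_zerocoord}) and the $3\times 2$ cofactor identity. Indeed, the paper's auxiliary derivations $\d_1' := (\d_1x_2)\d_2 - (\d_2x_2)\d_1$, $\d_2' := (\d_2x_1)\d_1 - (\d_1x_1)\d_2$, and $\d_3' := (\d_1'x_1)\d_3 - (\d_3x_1)\d_1' - (\d_3x_2)\d_2'$ encode exactly your relation $m_{23}v_1 - m_{13}v_2 + m_{12}v_3 = 0$, read back through your evaluation map $\Phi$ (note $\d_1'x_1 = -m_{12}$ with $a_i = \d_ix_1$, $b_i = \d_ix_2$); the paper then derives a contradiction from $\d_3' = 0$ after asserting that $\d_1'x_1$ is nonzero. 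Where you differ is the packaging, and your version is arguably tighter at the one delicate point: the paper dismisses the degenerate case in one line (``otherwise the Chain Rule would imply $\d_i' = 0$, so $\{\d_i\}_{i=1}^3$ would be linearly dependent''), which tacitly requires a further case analysis, since $\chi_A\d_1' = 0$ on a positive-measure set $A$ is a nontrivial relation only if $\chi_A\,\d_1x_2$ and $\chi_A\,\d_2x_2$ are not both zero, and if those and the coefficients of $\d_2'$ all vanish one must instead invoke $\chi_A\d_1 = \chi_A\d_2 = 0$. Your explicit dichotomy --- some minor nonzero on a set of positive measure, versus all minors vanishing $\mu$-a.e.\ followed by a pointwise rank-at-most-one argument --- makes that step honest. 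One small repair in the latter case: the measurable ratio $\langle v_1,v_2\rangle/|v_1|^2$ need not be essentially bounded near $\{v_1 = 0\}$, so either clear denominators (use $|v_1|^2 v_2 - \langle v_1,v_2\rangle v_1 = 0$, whose coefficient $\chi_{\{v_1\neq 0\}}|v_1|^2$ is nonzero) or restrict to a set $\{|v_1| \geq 1/k\}$ of positive measure; this is the same truncation device the paper itself uses in the proof of Theorem \ref{thm_1sets}.
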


\begin{proof}
Suppose $\{\d_i\}_{i=1}^3$ is linearly independent in $\U(X,\mu)$.  Then 
the derivations
\begin{equation} \label{eq_ONderivs}
\left.\begin{split}
\d_1' &\;:=\; (\d_1x_2)\d_2 - (\d_2x_2)\d_1 \\
\hspace{1.25in}
\d_2' &\;:=\; (\d_2x_1)\d_1 - (\d_1x_1)\d_2 
\hspace{1.25in}
\end{split}\right\}
\end{equation}
satisfy $\d_i'x_j = 0$ $\mu$-a.e.\ on $\R^2$, whenever $i \neq j$.  
Moreover, we have the $\mu$-a.e.\ identity
$$
\d_1'x_1 \;=\; \d_2'x_2 \;\neq\; 0
$$ 
on $\R^2$, otherwise the Chain Rule would imply that $\d_i' = 0$, so 
$\{\d_i\}_{i=1}^3$ would be linearly dependent.  On the other hand, 
equation \eqref{eq_ONderivs} implies that the derivation
\begin{eqnarray*}
\d_3' &:=&
(\d_1'x_1)\d_3 - (\d_3x_1)\d_1' - (\d_3x_2)\d_2' \\ &=&
(\d_1'x_1)\d_3 - 
(\d_3x_1)\big[ (\d_1x_2)\d_2 - (\d_2x_2)\d_1 \big] -
(\d_3x_2)\big[ (\d_2x_1)\d_1 - (\d_1x_1)\d_2 \big] \\ &=&
(\d_1'x_1)\d_3 
+ \det\left[\begin{array}{cc}
\d_1x_1 & \d_1x_2 \\
\d_3x_1 & \d_3x_2
\end{array}\right]\d_2
+ \det\left[\begin{array}{cc}
\d_2x_2 & \d_2x_1 \\
\d_3x_2 & \d_3x_1
\end{array}\right]\d_1
\end{eqnarray*}
acts as zero on every polynomial in $\R^3$ and hence, on every $f \in 
\Lip_b(\R^3)$.  This therefore contradicts the linear independence of
$\{\d_i\}_{i=1}^3$, since $\d_1'x_1$ must be nonzero in 
$L^\infty(\R^3,\mu)$.
\end{proof}

\begin{proof}[Proof of Theorem \ref{thm_euclabsocty}]
Assuming all of the hypotheses, let $\{X^n\}_{n=1}^\infty$ again denote 
the measurable decomposition of $X = \R^2$ from Theorem 
\ref{thm_cheegerrademacher}.  Moreover, a theorem of Keith \cite[Thm 
2.7]{KeithCoord} states that on each chart $X^n$, coordinates can be 
chosen to be ``distance vectors'' --- that is, there exist points $\{ p_i 
\}_{i=1}^{k(n)}$ on $X$ so that 
$$
\xi^n(x) \;:=\; \big( d(x,p_1), \ldots, d(x,p_{k(n)}) \big)
$$
satisfies the differentiability property \eqref{eq_diffcond}.

Applying Theorem \ref{thm_diffderiv} and Lemma \ref{lemma_dimrank}, each 
$\U(X^n,\mu)$ must either have rank-$1$ or rank-$2$.  If $\U(X^n,\mu)$ has 
rank-$1$, for some $n \in \N$, then fix a coordinate function $\xi^n(x) = 
|x - p|$ on $X^n$.  Since the support of $\mu$ is dense in $\R^2$, it 
follows that for every point of density $z = (z_1,z_2)$ of $\mu$ in 
$\R^2$, there are sequences $\{y^i\}_{i=1}^\infty$ and 
$\{z^i\}_{i=1}^\infty$, both converging to $z$,
and 
$$
\frac{y_2^i - z_2}{|y^i - z|} \to 1 \; \text{ and } \;
\frac{z_2^i - z_2}{|z^i-z|} \to 0, \; \text{ as } \; i 
\to \infty.
$$
Applying Theorem \ref{thm_cheegerrademacher} to $f(x) = x_2$, it follows 
that
\begin{eqnarray*}
-d^nf(z) &=& 
\lim_{i \to \infty} \frac{z_2^i - z_2 - d^nf(z)|z^i-z|}{|z^i-z|} \\ &=&
0 \;=\;
\lim_{i \to \infty} \frac{y^i_2 - z_2 - d^nf(z)|y^i-z|}{|y^i-z|} \;=\;
1 - d^nf(z)
\end{eqnarray*}
which is a contradiction.  Therefore each $\U(X^n,\mu)$ must have 
rank-$2$, so the desired conclusion follows from Theorem 
\ref{thm_rigiditylowdim}.
\end{proof}

\begin{rmk}[Open Problems]
It is interesting to note that Lemma \ref{lemma_PIpushfwdLI} holds true 
for all $k \in \N$.  We would obtain all cases of Cheeger's measure 
conjecture if an analogue of Theorem \ref{thm_2dimLI} were also true for 
all $k \in \N$.  Recalling further, Theorem \ref{thm_2dimLI} relies 
crucially on Theorem \ref{thm_disjointcovering}, which is an adaptation 
of the covering theorem of Alberti, Cs\"ornyei, and Preiss (Theorem 
\ref{thm_covering}).  

There are other covering theorems for null sets in $\R^k$, for all $k \in 
\N$ \cite[Prop 8.4]{ACP}.  However, for $k \geq 3$, such covers consist 
of neighborhoods of both $1$-dimensional curves and $(k-1)$-dimensional 
hypersurfaces in $\R^k$.  It is easy to see that the argument in Section 
\ref{sect_2dim} generalizes for the ``$(k-1)$-dimensional'' part of a 
$m_k$-null set, but not the ``$1$-dimensional'' part.

It remains an open question if, for $k \geq 3$, every $m_k$-null set in 
$\R^k$ can be covered by countably many neighborhoods of hypersurfaces, 
each of thickness $r_j$, so that the total thickness $\sum_j r_j$ is 
arbitrarily small \cite[Ques 8.5]{ACP}.  An affirmative answer to this 
question would prove Cheeger's measure conjecture in its full generality.
\end{rmk}

% USED FOR SIMPLICITY OF VIEWING
%\vfill
%\pagebreak
%=========================================================================

\bibliographystyle{alpha}
\bibliography{derivs-paper}

\end{document}